\numberwithin{equation}{section}
\newtheorem{rem}{Remark}[section]
\newtheorem{thm}{Theorem}[section]
\newtheorem{lem}{Lemma}[section]
\newtheorem{cor}{Corollary}[section]
\theoremstyle{definition}
\def\R{\mathbb{R}}
\def\dist{{\rm dist}\,}
\def\Xint#1{\mathchoice
    {\XXint\displaystyle\textstyle{#1}}%
     {\XXint\textstyle\scriptstyle{#1}}%
     {\XXint\scriptstyle\scriptscriptstyle{#1}}%
     {\XXint\scriptstyle\scriptscriptstyle{#1}}%
	\!\int}
\def\XXint#1#2#3{{\setbox0=\hbox{$#1{#2#3}{\int}$}	\vcenter{\hbox{$#2#3$}}\kern-.5\wd0}}
\title{Around the normal derivative lemma}
\author{Darya~E.~Apushkinskaya\footnote{RUDN University and St. Petersburg State University}, 
\setcounter{footnote}{6}
Alexander~I.~Nazarov\footnote{PDMI RAS and St. Petersburg State University}}
\date{}
\begin{document}



\maketitle

\begin{abstract}
This survey provides a description of the history and the state of the art of one of the most important fields in the qualitative theory of elliptic partial differential equations including the strong maximum principle, the boundary point principle (the normal derivative lemma) and related topics.\medskip

{\bf Keywords:} Strong maximum principle, boundary point principle, normal derivative lemma, Hopf--Oleinik lemma, Harnack inequality, Aleksandrov--Bakelman maximum principle.\medskip

{\bf MSC2010:} 35-02, 35B50, 35J15

\noindent
\end{abstract}

\tableofcontents

\section{Introduction}

The qualitative theory of partial differential equations has been intensively developed over the past century. Among the most important tools for studying solutions of elliptic and parabolic equations are, in particular, the normal derivative lemma (also known as the Hopf--Oleinik lemma or the boundary point principle) and the strong maximum principle. They play a key role in proving uniqueness theorems for boundary value problems. They are also used in studying the symmetry properties of solutions, the behavior of solutions in unbounded domains (Phragm\'en--Lindel\"of type theorems), and in other applications. 
\medskip\medskip

The first results in this area can be traced back to the works of C.F.~Gauss, who proved the strong maximum principle for harmonic functions in 1840, in Section 21 of the famous paper \cite{G1840E}, see also \cite{G1839E}, \cite{E1839}. In modern notation, the Gauss statement reads as follows:\medskip

\textit{Let $u$ be a non-constant harmonic function in a domain $\Omega \subset \R^3$, that is $\Delta u=0$ in $\Omega$. Then the function $u$ attains neither maximum nor minimum in the interior points of $\Omega$.}
\medskip

In what follows, by strong maximum principle for a second-order linear elliptic operator ${\mathbb L}$ we mean the following assertion:\medskip

\noindent
{\bf The strong maximum principle.}
\textit{Let $u$ be a super-elliptic function in a domain $\Omega \subset \R^n$, that is\footnote{We assume that the principal coefficients of the operator ${\mathbb L}$ form a {\bf\textit{non-positive}} matrix.} ${\mathbb L} u \geq 0$ в $\Omega$. If $u$ attains its minimum at an interior point of the domain then 
$u \equiv const$ and ${\mathbb L} u \equiv 0$.}\medskip

We also recall the formulation of the weak maximum principle: \medskip

\noindent
{\bf The weak maximum principle.}
\textit{Let $u$ be a super-elliptic function in a bounded domain $\Omega \subset \R^n$. If $u$ is non-negative on the boundary of $\Omega$ then $u$ is non-negative in $\Omega$.
}
\medskip

The boundary version of the strong maximum principle is the so-called normal derivative lemma, first formulated by S.~Zaremba in 1910 \cite{Z1910E} for harmonic functions in a (three-dimensional, bounded) domain satisfying the interior ball condition\footnote{Notice that Zaremba used this lemma to prove the uniqueness theorem for a mixed problem (the boundary of the domain is split into two parts, one of which is subject to the Dirichlet condition and the other to the Neumann condition). Nowadays, it is called the Zaremba problem, although Zaremba himself in \cite{Z1910E} points out that it was posed to him by W.~Wirtinger.}.  \medskip

\noindent
{\bf The normal derivative lemma.}
{\it Let $u$  be a non-constant super-elliptic function in a domain $\Omega \subset \R^n$. If $u$ attains its minimum at a boundary point $x^0\in \partial\Omega$ then the following inequality holds:
\begin{equation}
\liminf\limits_{\varepsilon\to+0}\,\frac  {u(x^0+\varepsilon{\bf n})-u(x^0)}\varepsilon\,>0,
\label{eq:bpp}
\end{equation}
where $\mathbf{n}$ is the interior normal vector to the boundary at the point $x^0$.

In particular, if the function $u$ has a derivative with respect to the direction $\mathbf{n}$ at the point $x^0$ then $\partial_{\mathbf{n}}u (x^0)>0$.}
\medskip

It is important to note that the strong maximum principle is a property of the operator ${\mathbb L}$ whereas the normal derivative lemma also depends on the behavior of $\partial\Omega$ in a neighborhood of $x^0$. 
\medskip

Closely adjacent to the main subjects of the survey is the Harnack in\-equal\-ity, which can be regarded as a quantitative version of the strong maximum principle. It was first proved by C.G.A. Harnack\footnote{The mathematician Carl Gustav Axel Harnack had a twin brother Carl Gustav Adolf von Harnack, historian and theologian, founding president of the Kaiser Wilhelm Gesellschaft (now the Max Planck Society for Scientific Research). The highest award of the Max Planck Society bears his name. } in 1887 \cite[\S\,19]{H1887} for the harmonic functions on the plane. The classical formulation of this inequality is as follows: 
\medskip

\noindent {\bf The Harnack inequality.}
\textit{Let ${\mathbb L}$ be an elliptic operator in a domain $\Omega$. If $u$ is a non-negative solution of the equation ${\mathbb L}u=0$ in $\Omega$ then in any bounded subdomain $\Omega'$ such that $\overline{\Omega}{}'\subset\Omega$, we have the inequality
\begin{equation}
\sup\limits_{\Omega'}u \leq C\inf\limits_{\Omega'}u,
\label{eq:Harnack}
\end{equation}
where $C$ is a constant independent of $u$.
}

\begin{rem}
It is clear from a compactness argument that it suffices to prove (\ref{eq:Harnack}) in the case where $\Omega$ and $\Omega'$ are concentric balls. In this case, it is important for applications that the constant $C$ does not depend on the radii of the balls (but only on their ratio) or, in the worst case, remains bounded as the radii tend to zero with a fixed ratio. 
\end{rem}

Some a priori estimates of solutions, in particular, the Aleksandrov--Bakelman maximum principle, can also be considered as a quantitative ver\-sion of the strong maximum principle. On the other hand, it has become clear relatively recently that the boundary gradient a priori estimate for solutions is a state\-ment dual to the normal derivative lemma.\medskip

The discussed topic is almost boundless, so in this paper we focus on the elliptic case\footnote{Moreover, we restrict ourselves to scalar equations. In this regard, we point to a recent survey \cite{KM2020} devoted to the maximum principle for elliptic  {\bf\textit {systems}}.} only. The main part of the article is split into three sections. Section~\ref{sec:nondiv} discusses the properties of {\bf\textit {classical}} and {\bf\textit {strong}} (sub/super)so\-lu\-tions of {\bf\textit {non-divergence}} type equations, whereas Section~\ref{sec:div} concerns the properties of {\bf\textit {weak}} (sub/super)so\-lu\-tions of {\bf\textit {divergence}} type equations. Finally, Section~\ref{sec:appl} is a ``patchwork'' of various generalizations and applications. Here we do not claim to be complete, and the choice of topics reflects the personal interests of the authors. 

Various aspects of the topic under discussion are reflected in monographs and survey papers \cite{PW1984}, \cite{Sp1981}, \cite{KoLa1988E}, \cite{Br1992}, \cite{Frn2000}, \cite{PS2004}, \cite{Na2005E}, \cite{Kas2007}, \cite{PS2007}, \cite{ABMMZR2011E}. In this paper, we have used some information from these sources, as well as from our articles \cite{AN2016}, \cite{AN2019}, but we tried to cover the history of the mentioned issues as deeply as possible. 
\medskip

We are deeply grateful to Nina Nikolaevna Uraltseva, our Teacher, who introduced us to this topic. We are grateful to N.D.~Filonov, A.I.~Ibragimov, M.~Kwa\'{s}nicki, \ V.G.~Maz'ya, \ R.~Musina, \ M.V.~Safonov, \ T.N.~Shilkin, B.~Sirakov, and M.D.~Surnachev for consultations and discussions. Special thanks to G.V. Rosenblum and N.S. Ustinov for the help in the literature search. \medskip

Authors' work was supported by RFBR, project 20-11-50059. 

\subsection*{Basic notation}
\addcontentsline{toc}{subsection}{Basic notation}

\noindent
$x=(x_1,\dots,x_{n-1},x_n)=(x',x_n)$ are points in $\R^n$, $n\geq2$.

\noindent
$|x|$, $|x'|$ are the Euclidean norms in the corresponding spaces.
\smallskip

\noindent
$\R_+=[0,+\infty)$ denotes the {\bf\textit {closed}} half-axis.
\smallskip

\noindent
$\Omega$ is a domain  (i.e., a connected open set) in $\R^n$ with a boundary $\partial\Omega$.

\noindent
$\Omega$ is assumed to be bounded, unless otherwise  (as in $\S\,\ref{ss:4.2}$) specified.

\noindent
$\overline{\Omega}$ denotes the closure of $\Omega$. 

\noindent
$|\Omega|$ is the Lebesgue measure of $\Omega$.

\noindent
${\rm diam}(\Omega)$ is the diameter of $\Omega$. 

\noindent
${\rm d}(x)=\dist(x, \partial\Omega)$ is the distance from the point $x$ to $\partial\Omega$.
\smallskip

\noindent
$B_r^n(x^0)=\{x\in\R^n\,\big|\, |x-x^0|<r\}$ is the open ball in $\R^n$ with center $x^0$ and radius $r$; $B_r^n=B_r^n(0)$. 
If the dimension of the Euclidean space is clear from the context, we will simply write   $B_r(x^0)$ and $B_r$.

\noindent
$Q_{r,h}=B_r^{n-1}\times(0,h)$.
\smallskip

The indices $i$ and $j$ run from $1$ to $n$.
$D_i$ denotes the operator of (weak) differentiation with respect to $x_i$.
We adopt the standard convention regarding summation with respect to repeated indices.
\smallskip

For a function $f$ we set $f_{\pm}=\max\{\pm f,0\}$ and
$$
\Xint{ -}\limits_{\Omega}f\,dx=\frac 1{|\Omega|}\int\limits_{\Omega}f\,dx.
$$

We use the letters  $C$ and $N$ (with or without indices) to denote various positive constants. To indicate that, say,
$C$ depends on some parameters, we list them in parentheses $C(\dots)$.

\subsection*{Function spaces and classes of domains}
\addcontentsline{toc}{subsection}{Function spaces and classes of domains}

${\cal C}^k(\overline{\Omega})$ is the space of functions defined on $\overline{\Omega}$ and having continuous derivatives up to the order $k$ ($k \geq 0$). For brevity we write ${\cal C}$ instead of ${\cal C}^0$.
\smallskip

We denote by $L_p(\Omega)$, $W^k_p(\Omega)$, and ${\stackrel{\circ}W\vphantom {W}\!\!^k_p}(\Omega)$  the standard Lebesgue and Sobolev spaces, see, e.g., \cite[\S\,4.2.1]{Tb1978}; $\|\cdot\|_{p,\Omega}$ stands for the norm in $L_p(\Omega)$. Further, we write $f\in L_{p,{\rm loc}}(\Omega)$ if $f\in L_p(\Omega')$ for arbitrary subdomain $\Omega'$ such that $\overline{\Omega}{}'\subset\Omega$. In a similar way we understand $f\in W^k_{p,{\rm loc}}(\Omega)$.
\smallskip

$L_{p,q}(\Omega)$ is the Lorentz space, see, e.g., \cite[\S\,1.18.6]{Tb1978}.

\medskip

We say that $\sigma : [0,1]\rightarrow \R_+$ is a function of the ${\cal D}$ class, if
\begin{itemize}
\item $\sigma$ is continuous, increasing, and $\sigma (0)=0$;
\item $\sigma (\tau)/\tau$ is decreasing and integrable.
\end{itemize}

\begin{rem} \label{rem:A}
Notice that the monotonicity assumption for $\sigma(\tau)/\tau$ is not re\-stric\-tive. Indeed, for an increasing function $\sigma :[0,1]\rightarrow \R_+$ such that $\sigma (0)=0$ and $\sigma (\tau)/\tau$ is integrable, we define $$
\widetilde{\sigma} (t)=t \sup\limits_{\tau \in [t,1]}\frac{\sigma (\tau)}{\tau}, \qquad t\in (0,1).
$$
Obviously, $\widetilde{\sigma} (t)/t$ decreases on $[0,1]$ and $\sigma (t) \leq \widetilde{\sigma} (t)$ on $(0,1 ]$ (the latter inequality allows us to put $\widetilde{\sigma}$ instead of $\sigma$ in all estimates). Further, the set of points where $\sigma (t)<\widetilde{\sigma} (t)$ is at most a countable union of the intervals $(t_{1j},t_{2j})$. Evidently, $\widetilde{\sigma}$ is increasing on each of these intervals and therefore it is increasing on $[0,1]$. 

Now we consider the integral
$$
\int\limits_0^1 \frac{\widetilde\sigma(\tau)}{\tau}\,d\tau =\int\limits_{\{\widetilde\sigma=\sigma\}} \frac{\sigma(\tau)}{\tau}\,d\tau + \sum\limits_j\int\limits_{t_{1j}}^{t_{2j}} \frac{\widetilde\sigma(\tau)}{\tau}\,d\tau.
$$
However, on $(t_{1j},t_{2j})$ we have
$$
\frac{\widetilde\sigma(t)}{t}\equiv \frac{\sigma(t_{1j})}{t_{1j}}=\frac{\sigma(t_{2j})}{t_{2j}},
$$
whence, taking into account the monotonicity of $\sigma$ we arrive at
$$
\int\limits_0^1 \frac{\widetilde\sigma(\tau)}{\tau}\,d\tau =\int\limits_{\{\widetilde\sigma=\sigma\}} \frac{\sigma(\tau)}{\tau}\,d\tau + \sum\limits_j\big(\sigma(t_{2j})-\sigma(t_{1j})\big)<\infty.
$$
Thus, $\widetilde{\sigma} \in {\cal D}$.
\end{rem}

\begin{rem} \label{rem:B}
Without loss of generality, we can also assume that $\sigma$ is con\-tin\-u\-ous\-ly differentiable on $(0;1]$. Indeed, for any $\sigma \in {\cal D}$ we can define
\begin{equation} \label{eq:hat-sigma}
\hat{\sigma} (r):=2\int\limits_{r/2}^r \frac{\sigma (\tau)}{\tau} \,d\tau
= 2\int\limits_{1/2}^1 \frac{\sigma (r\tau)}{\tau}\,d\tau, \qquad r \in (0;1].
\end{equation}
By the monotonicity of the functions $\sigma$ and $\dfrac{\sigma (\tau)}{\tau}$, we conclude from the second equality in (\ref{eq:hat-sigma}) that $\hat{\sigma}$ also increases whereas $\dfrac{\hat{\sigma}(r)}{r}$ decreases on $(0;1]$.
Further, the first equality in (\ref{eq:hat-sigma}) easily implies that $\hat{\sigma} \in {\cal C}^1(0;1]$, and the following inequalities hold:
\begin{equation} \label{eq:double-sigma}
\sigma (r) \leq \hat{\sigma} (r) \leq 2\sigma (r/2), \qquad r \in (0;1].
\end{equation}
The second inequality in (\ref{eq:double-sigma}) provides $\hat{\sigma} \in {\cal D}$. Finally, the first inequality in (\ref{eq:double-sigma}) allows us to put $\hat{\sigma}$ instead of $\sigma$ in all estimates.
\end{rem}

We say that a function $\zeta : {\Omega} \to \R$ satisfies: 
\begin{itemize}
\item the H\"{o}lder condition with exponent $\alpha\in(0,1]$, if
$$
|\zeta(x)-\zeta (y)|\leq C|x-y|^{\alpha} \quad \text{ for all}\quad x,y \in \Omega;
$$
\item 
the Dini condition, if there is a function $\sigma\in{\cal D}$ such that
$$
|\zeta(x)-\zeta (y)|\leq \sigma (|x-y|)\quad \text{ for all}\quad x,y \in \Omega.
$$ 
\end{itemize}
Further, ${\cal C}^{k,\alpha}(\Omega)$ and ${\cal C}^{k, {\cal D}}(\Omega)$ for $k\geq0$ are the spaces of functions which have derivatives of order $k$ satisfying the H\"{o}lder condition with exponent $\alpha~\in~(0,1]$ (respectively, the Dini condition). Functions in ${\cal C}^{0,1}(\Omega)$ are called Lipschitz.
\medskip

We say that a domain $\Omega \subset \R^n$ belongs to the {\bf class} ${\cal C}^k$ or is ${\cal C}^k$-{\bf smooth} for some $k\geq0$, if there is an $r>0$ such that for every point $x^0\in\partial \Omega$ the set  $B_r(x^0)\cap \partial\Omega $ (in an appropriate Cartesian coordinate system) is the graph\footnote{The set $B_r(x^0)\cap \Omega $ lies on one side of the graph.} of a function $x_n=f(x')$, $f\in {\cal C}^k(G)$ (here $G$ is a domain in $\R^{n-1}$). In a similar way we define domains of classes ${\cal C}^{k,\alpha }$ and ${\cal C}^{k, {\cal D}}$. 

Domains of ${\cal C}^{0,1}$ class are called strongly Lipschitz.

\medskip

Recall that the {\bf interior ball condition} means that one can touch any point of the boundary $\partial\Omega$ with a ball of fixed radius lying in $\Omega$.

In a similar way, denote by ${\mathfrak T}(\phi,h)$ (here $\phi : [0,+\infty) \mapsto [0,+\infty)$ is a convex function, $\phi(0)=0$, and $h>0$) the domain (body)
$$
{\mathfrak T}(\phi,h)=\big\{x\in\R^n\,\big| \, \phi(|x'|)<x_n<h\big\}.
$$
Assume that one can touch any point of the boundary $x^0\in\partial\Omega$ with a body congruent to ${\mathfrak T}(\phi,h)$ with vertex at the point $x^0$, and this body lies in $\Omega$. Suppose also that $\phi,h$ do not depend on $x^0$. Then we say that $\Omega$ satisfies \begin{itemize}
    \item  the {\bf interior ${\cal C}^{1,\alpha}$-paraboloid condition}, $\alpha\in(0,1]$, if $\phi(s)=Cs^{1+\alpha}$ (for $\alpha=1$ this condition coincides with the interior ball condition);
    \item the {\bf interior ${\cal C}^{1,{\cal D}}$-paraboloid condition} if $\phi'(0+)=0$, and $\phi'$ satisfies the Dini condition;
    \item the {\bf interior cone condition} if $\phi(s)=Cs$.
\end{itemize}
In a similar way we define conditions of {\bf exterior ball}, {\bf exterior paraboloid} and {\bf exterior cone}.

It is easy to see that any domain of ${\cal C}^{1,1}$ class satisfies the interior and exterior ball conditions. Moreover, these conditions together are equivalent to the ${\cal C}^{1,1}$-smoothness of the domain, see, e.g., \cite[Lemma 2]{NU1985E}. In a similar way, the ${\cal C}^{1,\alpha}$-smooth domains are exactly domains satisfying the interior and exterior ${\cal C}^{1,\alpha}$-paraboloid conditions; the ${\cal C}^{1,{\cal D}}$-smooth domains are exactly domains satisfying the interior and exterior ${\cal C}^{1,{\cal D}}$-paraboloid conditions\footnote{Without a priori assumption ``the boundary is locally the graph of a function'' these equivalences were proved in \cite{ABMMZR2011E}.}; and strongly Lipschitz domains satisfy the interior and exterior cone conditions\footnote{Here, contrary to the assertion made in \cite{ABMMZR2011E}, there is no longer any equivalence. A counterexample is a Lipschitz, but not strongly Lipschitz, domain composed of two ``bricks'', see, e.g., \cite[p.39]{Mn2003}.}.

\section{Non-divergence type operators}\label{sec:nondiv}

In this Section, we consider operators with the following structure:
\begin{equation}
{\cal L} \equiv -a^{ij}(x)D_i D_j+b^i(x)D_i.
\label{eq:nondiv_operator}
\end{equation}
We introduce the notation ${\cal A}=(a^{ij})$, ${\bf b}=(b^i)$. If ${\bf b}\equiv 0$ then we write ${\cal L}_0$ instead of ${\cal L}$.

The matrix of principal coefficients ${\cal A}$ is symmetric and satisfies either the {\bf degenerate ellipticity condition}
\begin{equation}
a^{ij}(x)\xi_i\xi_j\geq0 \quad\mbox{for all}\ \  \xi\in\R^n,
\label{eq:ellipt}
\end{equation}
or the {\bf uniform ellipticity condition}
\begin{equation}
\nu|\xi|^2\leq a^{ij}(x)\xi_i\xi_j\leq\nu^{-1}|\xi|^2 \quad\mbox{for all}\ \  \xi\in\R^n
\label{eq:uniell}
\end{equation}
(here $\nu \in (0,1]$ is the so-called {\bf ellipticity constant}).

In Subsections \ref{ss:2.1}--\ref{ss:2.2} we assume that the condition (\ref{eq:ellipt}) or (\ref{eq:uniell}) is satisfied for all $x\in\Omega$. Starting from Subsection \ref{ss:2.3}, it is assumed that the entries of the matrix ${\cal A}$ are measurable functions, and the condition  (\ref{eq:ellipt}) or (\ref{eq:uniell}) holds for almost all $x\in\Omega$.

\begin{rem}
\label{rem:1}
For operators in the more general form ${\cal L}+c(x)$, both the strong maximum principle and the normal derivative lemma obviously do not hold in the formulation given in the Introduction. Indeed, the first eigenfunction of the Dirichlet problem for the Laplacian is a counterexample even for the weak maximum principle. In this case, one usually imposes a condition on the sign of the coefficient $c(x)$ in a neighborhood of the minimum point. We provide two pairs of simple assertions. 
\begin{enumerate}
    \item Assume that the strong maximum principle holds for the operator ${\cal L}$. 
    \begin{enumerate}
        \item[(a)] Let $c\geq0$, $c\not\equiv0$. If ${\cal L}u+cu\geq0$ in $\Omega$ then $u$ cannot attain its {\bf\textit {negative}} minimum in $\Omega$.
        
        \item[(b)] Let $c\leq0$, $c\not\equiv0$. If ${\cal L}u+cu\geq0$ in $\Omega$ then $u$ cannot attain its {\bf\textit{non-negative}} minimum in $\Omega$ unless $u\equiv0$.
    \end{enumerate}
    
    \item Assume that the normal derivative lemma holds for the operator ${\cal L}$ in the domain $\Omega$.
    \begin{enumerate}
        \item[(a)] 
        Let ${\cal L}u+cu\geq0$ in $\Omega$, $c\geq0$, $c\not\equiv0$. If $u$ attains its {\bf\textit {negative}} minimum at a point $x^0\in \partial\Omega$ then the inequality $\partial_{\mathbf{n}}u (x^0)>0$ holds.
        
        \item[(b)] Let ${\cal L}u+cu\geq0$ in $\Omega$, $c\leq0$, $c\not\equiv0$. If $u$ attains its {\bf\textit {non-negative}} minimum at a point $x^0\in \partial\Omega$ then the inequality $\partial_{\mathbf{n}}u (x^0)>0$ holds unless $u\equiv0$.
    \end{enumerate}
\end{enumerate}
All four assertions follow from the fact that the inequality ${\cal L}u+cu\geq0$ implies ${\cal L}u\geq0$ in some neighborhood of the minimum point. 
\end{rem}

\subsection{Classical results: from Gauss and Neumann\\ to Hopf and Oleinik}\label{ss:2.1}

Recall that the strong maximum principle for harmonic functions in a three-dimensional domain was obtained by C.F.~Gauss \cite{G1840E} on the basis of his mean value theorem\footnote{An extensive survey of mean value theorems for various classes of functions is contained in \cite{Kz2019E}, see also \cite{NV1994}.}. Since this theorem is valid for harmonic functions in $\R^n$ for any $n$, the Gauss proof is obviously valid in any dimension and, moreover, can be easily extended to superharmonic functions. 

Proof of the strong maximum principle for uniformly elliptic operators of the more general form ${\cal L} +c(x)$ with ${\cal C}^2$-smooth coefficients (in the form given in the item~1(a) of Remark \ref{rem:1}) was given: 

\begin{itemize}
\item 
    in 1892, for $c(x)>0$ in the two-dimensional case \cite{P1892};
\item 
    in 1894, for $c(x)>0$ in the multidimensional case \cite{M1894};
\item 
    in 1905, for $c(x)\geq0$ in the two-dimensional case \cite{P1905}, see also \cite{L1912}.
\end{itemize}

The most important step was taken in 1927 by E.~Hopf \cite{H1927}\footnote{A similar idea is contained in \cite{Pi1927}, but the strong maximum principle is not established in this paper.}, see also \cite{H2002}. Although in this paper the validity of the strong maximum principle is established for uniformly elliptic operators of the form (\ref{eq:nondiv_operator}) with continuous coefficients, actually the Hopf proof runs without changes for operators with {\bf\textit {bounded}} coefficients. 

Another important observation was made in \cite{H1927} for operators of the form ${\cal L}+c(x)$. In addition to the obvious assertion of item~1(a) of Remark \ref{rem:1}, Hopf showed\footnote{In 1954, A.D. Aleksandrov \cite{Al1954E} gave another (purely geometric) proof of this statement.} that if ${\cal L}u+cu\geq0$ in $\Omega$, then without any conditions on the sign of the coefficient $c(x)$, the function $u$ cannot attain a {\bf\textit{zero}} minimum in $\Omega$ unless $u\equiv0$. 
\medskip

As mentioned in the Introduction, the normal derivative lemma was first established for harmonic functions by S.~Zaremba \cite{Z1910E} under the interior ball condition on the boundary of a three-dimensional domain. The Zaremba proof uses only the weak maximum principle and the Green's function of the Dirichlet problem for the Laplacian in a ball. So it is valid in any dimension, and also runs for superharmonic functions. 

It should be noted that for the Laplace operator there is an alternative (and equivalent) formulation of the normal derivative lemma: 
\medskip

\textit{Let ${\cal G}$ be the Green's function of the Dirichlet problem for the Laplacian in $\Omega$. Then the inequality $\partial_{\mathbf{n}}{\cal G}(x,x^0)>0$ holds for $x\in\Omega$ and $x^0\in \partial\Omega$.}\medskip

This assertion was proved by C.~Neumann \cite{N1888} back in 1888 for a two-dimensional ${\cal C}^2$-smooth convex domain. Later it was generalized: 
\begin{itemize}
\item 
    in 1901, for a two-dimensional ${\cal C}^2$-smooth domain, strictly star-shaped with respect to a point \cite{K1901};
\item 
    in 1909, for a general two-dimensional ${\cal C}^2$-smooth domain \cite{L1909}; 
\item 
    in 1912, for a two-dimensional ${\cal C}^{1,\alpha}$-smooth domain, $\alpha\in(0,1)$ \cite{K1912};
\item 
    in 1918, for a three-dimensional ${\cal C}^{1,1}$-smooth\footnote{In \cite{L1918} and \cite{L1921}, the author claims the statement for a domain of class ${\cal C}^{1,\alpha}$, $\alpha\in(0,1)$. However, the proof relies on the following fact: for any point $x^0\in\partial\Omega$ one can choose a point $x\in\Omega$ so that $x^0$ is the boundary point closest to $x$. This fact is not true for domains of class ${\cal C}^{1,\alpha}$ with $\alpha<1$.} domain \cite{L1918}, see also \cite{L1921}.
    \end{itemize}

For the operator $-\Delta +b^i(x)D_i+c(x)$ with $c(x)\geq0$ in a two-dimensional ${\cal C}^{2,\alpha}$-smooth domain, $\alpha\in(0,1)$, this statement was established in 1924 \cite{L1924}. Later, however, almost all the results we know were formulated in the form of the conventional normal derivative lemma\footnote{Probably, this is due to the fact that for operators with variable principal coefficients, the proof of the alternative formulation is essentially more difficult, and in the general case of measurable principal coefficients, the Green's function is not defined.}. 
\medskip

In 1931, it was first noted \cite{B1931} (for the operator $-\Delta+c(x)$ with $c(x)\geq0$ in a two-dimensional ${\cal C}^{2}$-smooth domain) that the normal derivative lemma is actually true for a derivative along any strictly interior direction $\boldsymbol{\ell}$ (i.e., along a direction that forms an acute angle with the interior normal). 

In 1932, G.~Giraud \cite[Ch. V]{G1932} proved the normal derivative lemma\footnote{Instead of the normal $\bf n$, Giraud uses the conormal ${\bf n}^{\cal L}$ with coordinates ${\bf n}^{\cal L} _i=a^{ij}{\bf n}_j$. This gives an equivalent statement. It is essential that he also considers the case $u(x^0)=0$ without conditions on the sign of $c(x)$; cf. item~2(a) of Remark~\ref{rem:1}.} for uniformly elliptic operators ${\cal L}+c(x)$, $c(x)\geq0$, with coefficients of class ${\cal C}^{0,\alpha}$, $\alpha\in(0,1)$, in an $n$-dimensional domain of class ${\cal C}^{1,1}$. In \cite{G1933} this result was extended to the case where the lower-order coefficients can have singularities on a set $\mathfrak M$, that is the union of a finite number of ${\cal C}^{1,\alpha}$-smooth, codimension~$1$ manifolds, and 
$$
|b^i(x)|, |c(x)|\leq C\cdot \dist^{\gamma-1}(x,\mathfrak M),\qquad \gamma\in(0,1).
$$

In 1937, for the first time, the condition on the boundary was significantly weakened: in the paper \cite{KL1937E}, the normal derivative lemma was proved for the Laplacian in a (three-dimensional) domain satisfying the interior ${\cal C}^{1,\alpha}$-paraboloid  condition\footnote{In some sources (for example, \cite{KH1972E} and \cite{ABMMZR2011E}), it is stated that a similar condition on the domain was considered already by Giraud. Indeed, in \cite{G1932} and \cite{G1933}, some theorems are proved for domains of class ${\cal C}^{1,\alpha}$, but the normal derivative lemma requires $\alpha=1$.}. 

Finally, a key step was taken by E.~Hopf \cite{H1952} and O.A.~Oleinik \cite{O1952E}, who simultaneously and independently proved the normal derivative lemma for uniformly elliptic operators with continuous coefficients under the interior ball condition on the boundary of a domain. The proofs in \cite{O1952E} and \cite{H1952} are based on the same idea and, like in \cite{H1927}, run without changes for operators with bounded coefficients\footnote{Hopf considers operators of the form (\ref{eq:nondiv_operator}), Oleinik deals with operators ${\cal L}+c(x)$ under the condition $c(x)\geq0$, $u(x^0)\leq0$. Moreover, in \cite{O1952E}, instead of the normal, she takes an arbitrary direction that forms an acute angle with $\bf n$.}. 
\medskip

Now we give a complete proof of the classical results \cite{H1927} and \cite{H1952}, \cite{O1952E}.

\begin{thm} \label{thm:HO}
{\bf A}. Let ${\cal L}$ be an operator of the form (\ref{eq:nondiv_operator}), let the functions $a^{ij}$, $b^i$ and $c$ be bounded in $\Omega$, and let the assumption (\ref{eq:uniell}) hold. Suppose that $u\in{\cal C}^2(\Omega)$, and ${\cal L}u+cu\geq0$ in $\Omega$. Then

\begin{itemize}
\item[A1.]\ The function $u$ cannot attain its zero minimum in $\Omega$ unless $u\equiv0$.
\item[A2.]\ 
If $c\geq0$ then $u$ can attain no negative minimum in $\Omega$ unless $u\equiv const$ and $c\equiv0$.
\item[A3.]\ 
If $c\leq0$ then $u$ can attain no positive minimum in $\Omega$ unless $u\equiv const$ and $c\equiv0$.
\end{itemize}

{\bf B}. In addition, let the domain $\Omega$ satisfy the interior ball condition, and let the function $u\not\equiv const$ be continuous in $\overline{\Omega}$. Denote by $x^0$ the point $\partial\Omega$ at which $u$ attains its minimum. Then the inequality (\ref{eq:bpp}) holds true under any of the following conditions\footnote{The assertion B1, without the sign condition for $c(x)$, was apparently first highlighted in \cite{GNN1979}, see also \cite{S1971}.}: 
\begin{itemize}
\item[B1.]\ 
$u(x^0)=0$;
\item[B2.]\ $u(x^0)<0$ and $c\geq0$;
\item[B3.]\ $u(x^0)>0$ and $c\leq0$.

\end{itemize}

\noindent Moreover, in (\ref{eq:bpp}) one can replace the normal $\bf n$ by any strictly interior direction $\boldsymbol{\ell}$. 
\end{thm}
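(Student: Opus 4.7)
The plan is to establish Part B by the Hopf barrier method and to deduce Part A from it by a standard contradiction argument. The six variants A1--A3, B1--B3 can be treated uniformly: letting $m = \min_{\overline\Omega} u$ and $\tilde u = u - m$, each sign hypothesis forces $cm \le 0$, so $({\cal L}+c)\tilde u = ({\cal L}+c)u - cm \ge 0$; since also $\tilde u \ge 0$, we may replace $c$ by $c^+ := \max(c,0)$ without losing this inequality, obtaining $({\cal L}+c^+)\tilde u \ge 0$ with a nonnegative zero-order coefficient, for which the weak maximum principle is at our disposal.

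The technical core is the classical Hopf barrier. Let $B_R(y) \subset \Omega$ be the interior ball tangent to $\partial\Omega$ at $x^0$, with $\mathbf{n} = (y-x^0)/R$, and work on the annulus $A = B_R(y)\setminus\overline{B_{R/2}(y)}$. Define
$$h(x) = e^{-\alpha|x-y|^2} - e^{-\alpha R^2},$$
which is positive inside $B_R(y)$, vanishes on $\partial B_R(y)$, and satisfies $\nabla h(x^0)\cdot\boldsymbol{\ell} = 2\alpha R\, e^{-\alpha R^2}\,\mathbf{n}\cdot\boldsymbol{\ell} > 0$ for any $\boldsymbol{\ell}$ making an acute angle with $\mathbf{n}$. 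A direct computation of ${\cal L}h$, combined with uniform ellipticity, bounds the dominant second-derivative term above by $-\alpha^2\nu R^2\, e^{-\alpha|x-y|^2}$ on $A$, while the first-order and zero-order contributions depend only linearly on $\alpha$; choosing $\alpha$ large enough (depending on $\nu$, the sup-norms of $a^{ij}, b^i, c$, and $R$) therefore yields $({\cal L}+c^+)h \le -\eta < 0$ throughout $A$.

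To complete Part B, I use Part A to conclude that $\tilde u > 0$ throughout $\Omega$, so $\tilde u \ge \delta > 0$ on the compact set $\partial B_{R/2}(y) \subset \Omega$. Choosing $\varepsilon > 0$ with $\varepsilon h \le \tilde u$ on $\partial B_{R/2}(y)$, the function $w := \tilde u - \varepsilon h$ is nonnegative on $\partial A$ (trivially on $\partial B_R(y)$, where $h=0$) and satisfies $({\cal L}+c^+)w \ge \varepsilon\eta > 0$ in $A$; the weak maximum principle for ${\cal L}+c^+$ therefore gives $w \ge 0$ on $\overline A$. Passing to the one-sided difference quotient at $x^0$, where $\tilde u(x^0) = h(x^0) = 0$, gives
$$\liminf_{t\to 0+}\frac{u(x^0 + t\boldsymbol{\ell})-u(x^0)}{t} \ge \varepsilon\,\nabla h(x^0)\cdot\boldsymbol{\ell} > 0,$$
which is Part B for any direction $\boldsymbol{\ell}$ forming an acute angle with $\mathbf{n}$, in particular $\boldsymbol{\ell} = \mathbf{n}$.

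Part A is then obtained by contradiction. If $u$ attained its minimum $m$ at an interior point while $u \not\equiv m$, the open set $\{u>m\}$ would be nonempty and its closure would meet $\{u=m\}$ inside $\Omega$; taking $y \in \{u>m\}$ with $\dist(y,\{u=m\}) < \dist(y,\partial\Omega)$ and enlarging a ball around $y$ until it first touches $\{u=m\}$ produces an inscribed ball $B_R(y)\subset\{u>m\}$ meeting $\{u=m\}$ at some interior point $z'$. Running the barrier comparison of the previous paragraph with $x^0$ replaced by $z'$---where now the positivity of $\tilde u$ on $\partial B_{R/2}(y)$ is immediate from $B_R(y)\subset\{u>m\}$, so Part A is not invoked---yields $\partial_{\mathbf{n}}u(z') > 0$, contradicting $\nabla u(z') = 0$ at the interior extremum. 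This settles A1; in cases A2 and A3 the residual possibility $\tilde u \equiv 0$ forces $u \equiv m$, which reduces ${\cal L}u+cu\ge 0$ to $cm \ge 0$, and the sign hypothesis on $c$ then forces $c \equiv 0$. The main obstacle throughout is the arbitrary sign of $c$, circumvented by the substitution $c \leadsto c^+$ that is legitimate precisely because $\tilde u \ge 0$.
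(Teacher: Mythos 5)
Your proof is correct, and the overall architecture (barrier function on an annulus, comparison via the weak maximum principle, contradiction at an interior minimum, liminf of difference quotients at the boundary) matches the paper's. But two ingredients are genuinely different, and the first is worth noting. First, you handle the zeroth-order coefficient $c$ by the single uniform reduction $\tilde u = u-m$ together with $c\leadsto c^+$ (legitimate because $\tilde u\ge 0$), which collapses all six cases A1--A3, B1--B3 into one situation: a nonnegative function with zero minimum under an operator with nonnegative potential. The paper instead proves the $c\equiv 0$ case first, obtains A2, A3, B2, B3 from it by localizing near the minimum (its Remark~2.3), and dispatches the sign-free cases A1, B1 by a multiplicative change of unknown $u=\psi v$ with $\psi=\exp(\lambda x_1)$ chosen to force the transformed potential strictly negative. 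Your reduction is shorter and avoids the connectedness step that is implicit in globalizing the paper's localized argument. Second, you use Hopf's original exponential barrier $e^{-\alpha|x-y|^2}-e^{-\alpha R^2}$ rather than the Landis power barrier $|x|^{-s}-r^{-s}$ that the paper adopts; for your purposes the exponential one is slightly more convenient because the quadratic-in-$\alpha$ ellipticity term dominates the linear-in-$\alpha$ first-order contribution and the $\alpha$-independent contribution of $c^+h$, so one can take $c^+$ along from the start instead of grafting it on afterwards. One small slip: in Part~A you write $m=\min_{\overline\Omega}u$, but $u$ is only assumed in ${\cal C}^2(\Omega)$ there, not continuous up to the boundary; you should take $m=\inf_\Omega u$, which equals the interior value $u(z)$ by hypothesis, and this does not affect the argument. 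You also present Part~B before Part~A in the text while making Part~B depend on Part~A and Part~A not depend on Part~B; this is logically sound since you explicitly verify the independence, but swapping the order of the last two paragraphs would read more cleanly.
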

 
\begin{proof}
1. We begin with the case $c\equiv0$.
First of all, we establish the weak maximum principle for the operator ${\cal L}$ in a domain $\pi$ with sufficiently small diameter $d$. 

Assume the contrary: let ${\cal L}u\geq0$ in $\pi$, and let $u|_{\partial \pi}\geq0$, but for some $x^0\in \pi$ we have $u(x^0)=-A<0$. Consider the function
$$
u^\varepsilon(x)=u(x)-\varepsilon |x-x^0|^2.
$$
Obviously, for all sufficiently small $\varepsilon$ we have
$$
u^\varepsilon|_{\partial \pi}\geq-\varepsilon d^2>-A=u^\varepsilon(x^0). 
$$
Therefore $u^\varepsilon$ attains its minimum at some point $x^1\in \pi$. At this point we have $Du^\varepsilon(x^1)=0$, and the matrix $D^2u^\varepsilon(x^1)$ is non-negative definite. Thus ${\cal L}u^\varepsilon(x^ 1)\leq0$. 

However, the assumption ${\cal L}u\geq0$ implies
$$
{\cal L}u^\varepsilon\ge 2\varepsilon\big(a^{ij}\delta_{ij}-b^i(x_i-x^0_i)\big)\geq 2\varepsilon(n\nu-d\sup|{\bf b}(x)|)>0 \quad \text{in}\quad \pi, 
$$
provided $d< d_0:=\frac {n\nu}{\sup|{\bf b}(x)|}$. This contradiction proves the statement.\medskip

2. Now we prove the strong maximum principle for the operator ${\cal L}$. Assume the contrary: let ${\cal L}u\geq0$ in $\Omega$, and let $u\not\equiv const$, but let the set
\begin{equation}
M=\{x\in\Omega\,\big| \, u(x)=\inf\limits_{\Omega} u\} 
\label{eq:min-set}
\end{equation}
be not empty. The complement $\Omega\setminus M$ is open, and therefore there is a ball lying in it whose boundary contains a point in $M$. We place the origin at the center of this ball and denote by $r$ the radius of the ball, and by $x^0$ a point in $\partial B_r\cap M$. Without loss of generality, we can assume that $r<\frac{d_0}2$. 

In the annulus $\pi=B_r\setminus \overline{B}_{ \frac r2}$, we consider the {\bf barrier function}\footnote{Hopf and Oleinik used different barrier functions. Apparently the function (\ref{eq:barrier}) was first introduced for this purpose in \cite{La1968E}, see also \cite{La1968bE} and \cite[Ch. 1]{La1971E}.} 
\begin{equation}
v_s(x)=|x|^{-s}-r^{-s}.
\label{eq:barrier}
\end{equation}
We estimate ${\cal L}v_s$ taking into account the ellipticity condition (\ref{eq:uniell}):
$$
D_iv_s(x)=-sx_i|x|^{-s-2};\qquad D_iD_jv_s(x)=s(s+2)x_ix_j|x|^{-s-4}-s\delta_{ij}|x|^{-s-2};
$$
$$
\aligned
{\cal L}v_s(x)= &\, |x|^{-s-2}\cdot\Big(-s(s+2)a^{ij}\frac{x_i}{|x|}\frac{x_j}{|x|}+sa^{ij}\delta_{ij}-sb^ix_i\Big)\\
\leq &\, s|x|^{-s-2}\cdot\big[-(s+2)\nu +n\nu^{-1}+r\sup\limits_{\Omega}|{\bf b}(x)|\big].
\endaligned
$$
We choose $s$ so large that the expression in the square brackets is negative. Then for any $\varepsilon>0$ the function $w^\varepsilon=u-\inf\limits_{\Omega} u-\varepsilon v_s$ satisfies the inequality
${\cal L}w^\varepsilon\geq0$ in $\pi$.

Further, $\partial\pi=\partial B_r\cup\partial B_{\frac r2}$. Obviously, $w^\varepsilon|_{\partial B_r}\geq0$. Since $B_r\subset\Omega\setminus M$ by construction, the function $u-\inf\limits_{\Omega}u$ is bounded away from zero on the set $\partial B_{\frac r2}$, and for sufficiently small $\varepsilon>0$ we have $w^\varepsilon|_{\partial B_{\frac r2}}\geq0$. Therefore, we can apply the weak maximum principle to $w^\varepsilon$ in $\pi$, that gives $w^\varepsilon\geq0$ в $\pi$.

However, $w^\varepsilon(x^0)=0$. Therefore, for any vector $\boldsymbol{\ell}$ directed into $\pi$ we have $\partial_{\boldsymbol{\ell}}w^\varepsilon(x^0)\geq0$, that is 
$$
\partial_{\boldsymbol{\ell}}u(x^0)\geq\varepsilon \partial_{\boldsymbol{\ell}}v_s(x^0)>0.
$$
This gives a contradiction, since at the minimum point $Du(x^0)=0$, and the statement follows.
\medskip

3. Next, we prove the normal derivative lemma for ${\cal L}$. By assumption, one can choose a ball of radius $r$ touching $\partial\Omega$ at the point $x^0$. We place the origin at the center of this ball. According to the strong maximum principle, $u>u(x^0)$ in $B_r$. Further, verbatim repetition of part 2 of the proof gives the inequality (\ref{eq:bpp}), where $\bf n$ can be replaced by $\boldsymbol{\ell}$.\medskip

4. Finally, we drop the assumption $c\equiv0$. Statements {\it A2, A3, B2} and {\it B3} follow immediately from Remark \ref{rem:1}.

To prove {\it A1} и {\it B1}, we represent $u$ in the form $u=\psi v$, with $\psi>0$ and $v\geq0$ in $\overline{\Omega}$. Direct computation gives
\begin{equation}
\label{eq:tilde-L}
0\leq \frac {{\cal L}u+cu}{\psi} =\widetilde{\cal L}v:=-a^{ij}D_iD_jv+\widetilde{b}^iD_iv+\widetilde{c}v,
\end{equation}
where 
$$
\widetilde{b}^i=b^i-\frac{2a^{ij}D_j\psi}{\psi}, \qquad \widetilde{c}=\frac{{\cal L}\psi+c\psi}{\psi}.
$$
Now we put $\psi(x)=\exp(\lambda x_1)$. Then
$$
{\cal L}\psi+c\psi=\psi (-a^{11}\lambda^2+b^1\lambda+c)\leq\psi \big[-\nu\lambda^2 +\sup\limits_{\Omega}b^1(x)\lambda+\sup\limits_{\Omega}c(x)\big].
$$
We choose $\lambda$ so large that the expression in the square brackets is negative. Then for the operator $\widetilde{\cal L}$ defined in (\ref{eq:tilde-L}), the statements of items~1(b) and 2(b) in Remark \ref{rem:1} hold true. In particular, $v$ cannot vanish inside the domain, which gives {\it A1}. Since $u(x^0)=0$ implies $Du(x^0)=\psi(x^0)Dv(x^0)$, item~2(b) for $v$ provides {\it B1} for $u$.
\end{proof}

\subsection{Generalizing of classical results and refining \\the conditions on $\partial\Omega$}\label{ss:2.2}

After the basic results of \cite{H1952}, \cite{O1952E}, through the efforts of many authors, the topic was developed in several directions:
\begin{enumerate}
     \item extension of the class of differential operators, that is, weakening the requirements for the principal and lower-order coefficients;
     \item extension of the class of domains, that is, reduction of requirements on the boundary (for the normal derivative lemma);
     \item refinement of the applicability limits for the corresponding statements by constructing various counterexamples.
\end{enumerate} 

We begin the description of the results with the article of C.~Pucci \cite{Pu1957}--\cite{Pu1958}, in which the normal derivative lemma was established in the domain $\Omega=B_r$ for a wider class of operators than in \cite{H1952}, \cite{O1952E}. Namely, the ellipticity condition is allowed to degenerate in the directions tangent to $\partial\Omega$, and the lower-order coefficients satisfy the conditions 
\begin{equation}
\label{eq:Pucci}
|b^i(x)|\leq \frac {\sigma({\rm d}(x))} {{\rm d}(x)}, \qquad 0\leq c(x)\leq \frac {\sigma({\rm d}(x))} {{\rm d}^2(x)}, \qquad \sigma\in{\cal D}.
\end{equation}
The Pucci proof is based on the barrier function
$$
\mathfrak{v}(x)=\!\!\int\limits_0^{{\rm d}(x)}\int\limits_0^\tau \frac {\sigma(t)}t\,dtd\tau\  + \ \kappa{\rm d}(x)
$$
with an appropriate choice of the constant $\kappa$. This function and its variations were used later in many papers. 
\medskip

If the ellipticity condition degenerates even more, then the strong max\-i\-mum principle in its classical form does not hold. A.D.~Aleksandrov in a series of papers \cite{Al1958E}, \cite{Al1959E}, \cite{Al1959aE}, \cite{Al1960E}, \cite{Al1960aE} gave for such operators a description of the zero set structure of a non-negative function $u$, satisfying the inequality ${\cal L}u+cu\geq0$ in $\Omega$.\footnote{This problem is also discussed in \cite[Ch. III]{ORa1971E} and in \cite{Ra2008E}--\cite{Ra2008aE}; some operators with unbounded coefficients are considered in \cite{PT2009}; see also \cite{Fh2020}.} 

In the papers \cite{V1963}--\cite{V1964} by R.~V\'{y}born\'{y} the normal derivative lemma was proved for the operator ${\cal L}+c(x)$ in a ${\cal C}^{1,{\cal D}}$-smooth domain\footnote{More precisely, V\'{y}born\'{y} assumes that there exists a function $\rho\in{\cal C}^2(\Omega)\cap{\cal C}^1(\overline{\Omega})$ such that
$\rho(x)=0$ and $D\rho\ne0$ on $\partial\Omega$, $\rho>0$ and $|D^2\rho(x)|\leq \frac {\sigma (\rho(x))}{\rho(x)}$ in $\Omega$, where $\sigma\in{\cal D}$. The (local) existence of such a function for a domain of ${\cal C}^{1,{\cal D}}$ class was proved in \cite{Li1985}. }. The conditions imposed on the coefficients of the operator were the same\footnote{V\'{y}born\'{y} proves the assertion of item~2(a) of Remark \ref{rem:1}, in this case the upper bound for the coefficient $c(x)$ in (\ref{eq:Pucci}) is redundant.} as in \cite{Pu1957}. 

Unfortunately, the results of \cite{V1963}--\cite{V1964} are not widely known.\medskip 

In \cite{W1967}, sharp estimates were obtained for the derivatives of the Green's function of the Dirichlet Laplacian in a ${\cal C}^{1,{\cal D}}$-smooth domain\footnote{Under more restrictive conditions on the domain, some of these estimates were established earlier in \cite{E1956E} and \cite{S1958E}.}. In particular, the normal derivative lemma was proved in the Neumann form (the normal derivative of the Green's function on $\partial\Omega$ is positive). Also there was given a counterexample showing that the condition ${\cal C}^{1,{\cal D}}$ on the boundary cannot be relaxed to ${\cal C}^{1}$. Namely, if $\phi'$ does not satisfy the Dini condition at zero, then the relation $\partial_{\mathbf{n}}{\cal G}(x ,0)=0$ holds in the paraboloid ${\mathfrak T}(\phi,h)$.

The note \cite{VM1967E} was published at the same time as \cite{W1967}. Subtle asymptotics of harmonic functions in a neighborhood of non-smooth boundary points were derived in this note. As a corollary, the following statement was proved. Let a function $u$ be harmonic in the paraboloid ${\mathfrak T}(\phi,h)$, and let $u$ attain its minimum at the vertex $x^0=0$. Then the necessary and sufficient condition for the relation $\partial_{\boldsymbol{\ell}}u(0)>0$ (for any strictly interior direction $\boldsymbol{\ell}$) is the Dini condition for the function $\phi'$ at zero. This statement is equivalent to that obtained in \cite{W1967}.
\medskip 

The behavior of solutions to the equation ${\cal L}u=0$ in a neighborhood of the point $x^0\in\partial\Omega$ under the assumption $b^i(x)=o(|x-x^0|^{-1})$ was studied in  \cite{Od1967a}--\cite{Od1967} and \cite{Mi1967} in the cases where $\partial\Omega$ satisfies, respectively, the interior/exterior cone condition at the point $x^0$.
\medskip

A large series of papers generalizing the normal derivative lemma is due to B.N. Himchenko and L.I. Kamynin.

In the article \cite{Hi1969E} (see also \cite{Hi1970aE}), the normal derivative lemma for the Laplacian was established for domains satisfying the interior ${\cal C}^{1,{\cal D}}$- paraboloid condition. Further, in this paper (see also \cite{Hi1970bE}), the normal derivative es\-ti\-mate on $\partial\Omega$ was obtained for solutions of the problem 
$$
-\Delta u=f \quad \mbox{in} \quad\Omega,\qquad u\big|_{\partial\Omega}=0,
$$
provided that $\Omega$ satisfies the exterior ${\cal C}^{1,{\cal D}}$-paraboloid condition\footnote{Here (apparently, for the first time in the literature) one can see the duality of the gradient estimate for the solution on $\partial\Omega$ and the normal derivative lemma.}, while the right-hand side is subject to $|f(x)|\leq C {\rm d}^{\gamma-1}(x)$, $\gamma\in(0,1 )$. Finally, \cite{Hi1969E} gives examples showing that the conditions on the boundary cannot be noticeably improved (these examples repeat in essence the corresponding counterexamples from \cite{W1967} and \cite{VM1967E}). 

In the paper \cite{Hi1970E}, the results of \cite{Hi1969E} were extended to uniformly elliptic operators of the form ${\cal L}+c(x)$ with bounded coefficients $b^i(x)$. The normal derivative lemma is stated there (for any strictly interior direction) ``under the assumption that the maximum principle holds'' (apparently this means $c(x)\geq0$), and the gradient estimate on $\partial \Omega$ for the solution of the problem 
$$
{\cal L}u+cu=f \quad \mbox{in} \quad\Omega,\qquad u\big|_{\partial\Omega}=g
$$
is stated under the assumptions
$$
|c(x)|, |f(x)|\leq C {\rm d}^{\gamma-1}(x), \quad \gamma\in(0,1);\qquad g\in {\cal C}^{1,{\cal D}}(\partial\Omega).
$$

In the article \cite{KH1972E} (see also \cite{KH1971E}), the normal derivative lemma is ex\-tend\-ed to elliptic-parabolic operators 
$$
-a^{ij}(x,y)D_{x_i} D_{x_j}-\tilde a^{kl}(x,y)D_{y_k} D_{y_l}+b^i(x,y)D_{x_i}+\tilde b^k(x,y)D_{y_k}+c(x,y),
$$
with bounded coefficients under the following conditions: the matrix ${\cal A}$ sat\-is\-fies the uniform ellipticity condition, the matrix $\widetilde{\cal A}$ is non-negative definite, $c(x)\geq0$, and the domain $\Omega$ satisfies the interior ${\cal C}^{1,{\cal D}}$-paraboloid condition, and the paraboloid axis is not perpendicular to the plane $y=0$. 

the In \cite{KH1977E} and \cite{KH1978aE} (see also \cite{KH1975E}), the results of \cite{Hi1970E} are generalized to the class of weakly degenerate operators whose principal coefficients satisfy conditions similar to \cite{Pu1957}, \cite{V1963} (lower-order coefficients are bounded)\footnote{Further development of this topic can be found, for example, in \cite{CCK2013}.}.

Finally, in the series of papers \cite{KH1978E}--\nocite{KH1979E}\nocite{KH1979aE}\nocite{KH1979bE}\nocite{KH1979cE}\nocite{KH1980E}\cite{KH1980aE}
some subtle generalizations of the results from \cite{Al1958E}--\cite{Al1960aE} are given.
\medskip

A very interesting ``weakened'' form of the normal derivative lemma was established by N.S.~Nadirashvili \cite{Nd1983E} (see also \cite{Nd1981E}) in a domain $\Omega$ satisfying the interior cone condition. Namely, let ${\cal L}$ be a uniformly elliptic operator of the form (\ref{eq:nondiv_operator}), and $c(x)\geq0$.
Suppose that a non-constant function $u$ satisfying the condition ${\cal L}u+cu\geq0$ attains its non-positive minimum at the point $x^0\in \partial\Omega$. Then {\bf\textit{in any neighborhood of $x^0$ there is a point}} $x^*\in \partial\Omega$ such that for any strictly interior direction $\boldsymbol{\ell}$ the inequality 
$$
\liminf\limits_{\varepsilon\to+0}\,\frac  {u(x^*+\varepsilon\boldsymbol{\ell})-u(x^*)}\varepsilon\,>0.
$$
holds true. In \cite{Ka1985E} this result was generalized to a certain class of domains with outer ``peaks'' and to weakly degenerate (in the spirit of \cite{KH1977E}) non-divergence type operators. 
\medskip

In the article \cite{Li1985} by G.~Lieberman, the important notion of {\bf regularized distance}\footnote{In particular cases, this construction has been used earlier, see, e.g., \cite{Ne1962}, \cite{V1963}, \cite{V1964}.} was introduced. In particular, it was shown that in any domain $\Omega$ of class ${\cal C}^1$ there exists a function $\rho\in{\cal C}^2(\R^n\setminus\partial\Omega) \cap {\cal C}^1(\R^n)$ for which the estimates 
$$
C^{-1}{\rm d}(x)\leq \pm\rho(x)\leq C\,{\rm d}(x);
$$
$$
|D\rho(x)-D\rho(y)|\leq C\sigma(|x-y|);
$$
$$
|D^2\rho(x)|\leq C\,\frac {\sigma(|\rho(x)|)}{|\rho(x)|}
$$
hold true (the $+$ and $-$ signs are related to the points $x\in\overline{\Omega}$ and $x\in\mathbb R^n\setminus\Omega$, respectively). Here $\sigma$ stands for the common modulus of continuity for the derivatives of the functions describing $\partial\Omega$ in local coordinates.

As a corollary, the normal derivative lemma in a domain of class ${\cal C}^{1,{\cal D}}$ was obtained in \cite{Li1985} under conditions on the coefficients (both principal and lower-order) close to \cite{Pu1957}, \cite{V1963}. Later, in \cite{Li1986}, the gradient estimates on $\partial\Omega$ for solutions to the Dirichlet problem were established in a domain of class ${\cal C}^{1,{\cal D}}$ with boundary data $g \in {\cal C}^{1,{\cal D}}(\partial\Omega)$. Also the boundary smoothness of the solution was analyzed in \cite{Li1986} in the case where $Dg\in{\cal C}(\partial\Omega)$ does not satisfy the Dini condition. 

Finally, we mention the monumental work \cite{ABMMZR2011E}. In this paper, the as\-sump\-tions on the coefficients providing the validity of the normal derivative lemma and the strong maximum principle are somewhat weakened compared to the works listed earlier, although it is much more difficult to verify these conditions. Also in \cite{ABMMZR2011E} some new counterexamples are given, showing the sharpness of the introduced conditions.

\subsection{The Aleksandrov--Bakelman maximum principle}\label{ss:2.3}

This subsection is devoted to one of the most beautiful geometric ideas in the theory of PDEs, the maximum principle of A.D. Aleksandrov and I.Ya. Bakelman. This name is given to a priori maximum estimates for solutions of non-divergence type equations. These estimates have a huge number of applications and, in particular, play a key role in proving the strong maximum principle and the normal derivative lemma for equations with unbounded lower-order coefficients in Lebesgue spaces.

The first estimates of this type were published in the papers\footnote{The history of this result is complicated. The article \cite{B1961E} was published later than the short communication \cite{Al1960bE} but was submitted somewhat earlier.  In \cite[\S\,28.1]{B1994}, it is written: ``The first version of these maximum principles was obtained by Bakelman \cite{B1959E}, \cite{B-disserE} in 1959''. In fact, these papers do not yet contain the estimates under consideration, although the idea of studying normal images for estimating solutions was developed earlier by Aleksandrov in \cite{Al1958aE} as well as by Bakelman in \cite{B1958E}--\nocite{B1959E}\cite{B-disserE}. On the other hand, the survey \cite{Na2005E} does not describe the importance of \cite{B1961E} correctly.} \cite{Al1960bE} and \cite{B1961E}. An estimate for solutions of the Dirichlet problem in the general case was ob\-tained in \cite{Al1963E}. In this work the sharpness of the obtained es\-ti\-mates was proved as well\footnote{The results of \cite{Al1963E} were later rediscovered in \cite{Pu1966a}, \cite{Pu1966}. In this regard, the name ``Aleksandrov--Bakelman--Pucci (ABP) maximum principle'' often occurs in literature.}. In 1963 Aleksandrov gave in Italy a series of lectures about his method. These lectures were published in Rome \cite{Al1965}. 
\medskip

To prove the Aleksandrov--Bakelman estimate, we need some de\-fi\-ni\-tions. 

Let a function $u$ be continuous in $\Omega$, and let $u\bigr|_{\partial \Omega}<0$. We denote by $\widetilde\Omega={\rm conv}(\Omega)$ the convex hull of $\Omega$. In what follows, we assume that the function $u_+$ is extended by zero to $\widetilde\Omega\setminus\Omega$.
\medskip

{\bf The convex hull} of $u_+$ is the minimal upward convex function that majorizes $u_+$ in $\widetilde\Omega$. We denote this function by $z$. It is obvious that $z\bigr|_{\partial\widetilde\Omega}=0$, and the subgraph of the function $z$ is a convex set (the convex hull of the subgraph of $u_+$). It can also be shown (see \cite{NU1985E}) that if $\Omega$ is a ${\cal C}^{1,1}$-smooth domain and $u\in {\cal C}^{1, 1}(\Omega)$, then\footnote{Note that this statement is false if the condition $u\bigr|_{\partial \Omega}<0$ is relaxed to $u\bigr|_{\partial \Omega}\leq0$.}
$z\in {\cal C}^{1,1}(\widetilde\Omega)$. We also introduce the so-called {\bf contact set} 
$$
{\cal Z}=\{x\in\Omega\,\big| \, z(x)=u(x)\}.
$$

Now we define the (in general, multi-valued) {\bf normal mapping (hodograph mapping)} $\Phi:\widetilde\Omega\to\R^n$ generated by the
function $z$. This mapping assigns to any point $x^0\in\widetilde\Omega$ all possible vectors $p\in\R^n$ such that the graph of the function $\pi(x)=p\cdot(x-x^0)+z (x^0)$ is the supporting plane to the subgraph of the function $z$ at the point $x^0$. Obviously, if $z\in C^1(\widetilde\Omega)$, then the mapping $\Phi$ is single-valued in $\widetilde\Omega$ (but not in its closure!) and is given by the formula $\Phi(x) =Dz(x)$. 
\medskip

First, we consider an operator ${\cal L}_0$ with measurable coefficients.

\begin{lem} \label{lem:pmAB-1} Let $\Omega$ be a ${\cal C}^{1,1}$ class domain, let $u\in {\cal C}^{1,1}(\Omega)$ and $u\bigr|_{\partial \Omega}<0$. Suppose that the uniform ellipticity condition (\ref{eq:uniell}) is satisfied. Then the inequality \begin{equation} \label{eq:1.1}
\int\limits_{\Phi(\widetilde\Omega)}\mathfrak{g}(p)\,dp\leq \frac 1{n^n}\int\limits_
{\cal Z}\mathfrak{g}(Du)\cdot\frac {({\cal L}_0u)^n}{\det({\cal A})}\,dx
\end{equation}
holds true for any non-negative function $\mathfrak{g}$.
\end{lem}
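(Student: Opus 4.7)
The plan is to exploit the fact that the hodograph $\Phi$ is essentially determined by its action on the contact set $\mathcal{Z}$, where $u$ is regular enough to carry out an explicit change of variables, and then to use the pointwise arithmetic--geometric mean inequality for positive semidefinite matrices to bring the operator $\mathcal{L}_{0}u$ into the estimate.

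First I would establish the crucial reduction $\Phi(\widetilde{\Omega})\subseteq\Phi(\mathcal{Z})$. Given $p\in\Phi(x^{0})$ for some $x^{0}\in\widetilde{\Omega}$, consider affine functions $\pi_{c}(y)=p\cdot y+c$ and take $c^{*}=\sup_{y\in\widetilde{\Omega}}\bigl(u_{+}(y)-p\cdot y\bigr)$. Then $\pi_{c^{*}}$ is a concave majorant of $u_{+}$ touching it at some point $y^{*}$; by minimality of $z$, $\pi_{c^{*}}\geq z$ on $\widetilde{\Omega}$, so the chain $\pi_{c^{*}}(y^{*})=u_{+}(y^{*})\leq z(y^{*})\leq\pi_{c^{*}}(y^{*})$ forces $y^{*}\in\mathcal{Z}$ and $p\in\Phi(y^{*})$. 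The strict boundary condition $u|_{\partial\Omega}<0$ is precisely what keeps the touching point away from $\partial\widetilde{\Omega}$, in agreement with the footnote after the definition of the contact set.

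Next, on $\mathcal{Z}$ the function $u-z$ attains a (zero) maximum, so under the ${\cal C}^{1,1}$-regularity of both $u$ and $z$ one has $Du=Dz=\Phi$ almost everywhere on $\mathcal{Z}$, and $D^{2}u\leq D^{2}z\leq 0$ there since $z$ is concave; hence $-D^{2}u$ is positive semidefinite on $\mathcal{Z}$. Applying the area formula to the Lipschitz map $Du:\mathcal{Z}\to\R^{n}$ yields
\begin{equation*}
\int_{\Phi(\mathcal{Z})}\mathfrak{g}(p)\,dp\leq\int_{\mathcal{Z}}\mathfrak{g}(Du)\,\det(-D^{2}u)\,dx,
\end{equation*}
the inequality absorbing possible multiplicities of $Du$.

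Finally I would invoke the matrix AM--GM inequality: since $\mathcal{A}$ and $-D^{2}u$ are simultaneously non-negative definite on $\mathcal{Z}$, applying AM--GM to the eigenvalues of $\mathcal{A}^{1/2}(-D^{2}u)\mathcal{A}^{1/2}$ gives
\begin{equation*}
\mathcal{L}_{0}u=\mathrm{tr}\bigl(\mathcal{A}(-D^{2}u)\bigr)\geq n\bigl(\det(\mathcal{A})\,\det(-D^{2}u)\bigr)^{1/n},
\end{equation*}
which rearranges to the pointwise bound $\det(-D^{2}u)\leq n^{-n}(\mathcal{L}_{0}u)^{n}/\det(\mathcal{A})$ on $\mathcal{Z}$ (note that $\mathcal{L}_{0}u\geq 0$ there, so raising to the $n$-th power is legitimate). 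Substituting this bound and replacing $\Phi(\mathcal{Z})$ by $\Phi(\widetilde{\Omega})$ via Step~1 produces the desired inequality (\ref{eq:1.1}). The main obstacle is Step~1: verifying that every supporting gradient of $z$ is actually realized on the contact set, which is the step that genuinely uses the strict inequality on $\partial\Omega$ and the ${\cal C}^{1,1}$ regularity of both $u$ and $z$.
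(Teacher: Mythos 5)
Your overall strategy is sound and genuinely different from the paper's: you localize onto $\mathcal{Z}$ by first proving the set inclusion $\Phi(\widetilde\Omega)\subseteq\Phi(\mathcal{Z})$ and then applying the area formula only over $\mathcal{Z}$, whereas the paper applies the area formula over all of $\widetilde\Omega$ and shows that the integrand $\mathfrak{g}(Dz)\det(-D^2z)$ vanishes off $\mathcal{Z}$ (via a Carath\'eodory simplex argument). Your Steps~2 and~3 — comparison of Hessians on the contact set, the area formula, and the matrix AM--GM bound $\det(-D^2u)\leq n^{-n}(\mathcal{L}_0u)^n/\det\mathcal{A}$ — are correct and coincide with the paper's.

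Step~1, however, has a genuine gap. From $\pi_{c^*}(y^*)=u_+(y^*)\leq z(y^*)\leq\pi_{c^*}(y^*)$ you correctly get $z(y^*)=u_+(y^*)$, but this does \emph{not} force $y^*\in\mathcal{Z}$: by definition $\mathcal{Z}=\{x\in\Omega\,:\,z(x)=u(x)\}$, and since $z\geq u_+\geq u$ this requires $u(y^*)\geq 0$ \emph{and} $y^*\in\Omega$. Your $y^*$ may perfectly well satisfy $u_+(y^*)=0$ while lying on $\partial\widetilde\Omega$, in $\widetilde\Omega\setminus\Omega$, or in $\{u<0\}\cap\Omega$ — none of which are in $\mathcal{Z}$. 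A concrete instance: for $\Omega=B_1$, $u(x)=\tfrac12-|x|^2$, and $x^0$ in the conical part of $z$, the plane $\pi_{c^*}$ touches $u_+$ at exactly two points, one on $\partial B_1$ and one in $\mathcal{Z}$; ``some touching point'' is not enough, and the hypothesis $u|_{\partial\Omega}<0$ does not by itself exclude the bad one. To repair the step, observe (Carath\'eodory again, applied to the graph of $z$) that $(x^0,z(x^0))$ is a convex combination $\sum_i\lambda_i\,(y_i,u_+(y_i))$ of points where $\pi_{c^*}$ touches $u_+$; if $z(x^0)>0$, some $u_+(y_i)>0$, hence $y_i\in\{u_+>0\}\subseteq\mathcal{Z}$ and $p\in\Phi(y_i)$, while if $z\equiv0$ then $\Phi(\widetilde\Omega)=\{0\}$ has measure zero and there is nothing to prove. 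So the Carath\'eodory argument you were hoping to sidestep is still needed; your route merely relocates it from the paper's ``Hessian vanishes off $\mathcal{Z}$'' step to the ``every slope is realized on $\mathcal{Z}$'' step.
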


\begin{proof} Note that under the assumptions of Lemma the mapping $\Phi$ satisfies the Lipschitz condition. Changing variables in the integral we obtain
\begin{equation} \label{eq:1.2}
\int\limits_{\Phi(\widetilde\Omega)}\mathfrak{g}(p)\,dp=\int\limits_
{\widetilde\Omega}\mathfrak{g}(Dz)|\det(D^2z)|\,dx=\int\limits_{\widetilde\Omega}\mathfrak{g}(Dz)
\det(-D^2z)\,dx
\end{equation}

\noindent (the latter equality follows from the fact that $(-D^2z)$ is a non-negative definite matrix).\smallskip

If $x\notin {\cal Z}$ then we apply the Caratheodory theorem, see, e.g., \cite[\S\,17]{R1970}, to claim that $(x,z(x))$ is an interior point of a simplex\footnote{In this case the dimension of the simplex can be any number from $1$ to $n$.} that completely belongs to the graph of $z$. Therefore, the second derivative of $z$ in some direction vanishes. But since $D^2z(x)$ is non-positive definite matrix, this direction is a principal one, and thus $\det(-D^2z(x))=0$. 

Otherwise, if $x\in {\cal Z}$ then the tangency condition at the point $x$ gives
$$
Dz(x)=Du(x);\qquad -D^2z(x)\leq-D^2u(x)
$$
(the second relation is understood in the sense of quadratic forms and holds for almost all $x$). Therefore, (\ref{eq:1.2}) implies 
$$
\int\limits_{\Phi(\widetilde\Omega)}\mathfrak{g}(p)\,dp\leq\int\limits_{\cal Z}\mathfrak{g}(Du)\det(-D^2u)\,dx.
$$

Further, since ${\cal A}$ and $-D^2u$ are non-negative definite matrices on the set $\cal Z$, the matrix $-{\cal A}\cdot\!D^2u$ has non-negative eigenvalues. By the  inequality of arithmetic and geometric means, we have (here and below, ${\bf Tr}$ is the matrix trace) 
$$
\det(-D^2u)=\frac {\det(-{\cal A}\cdot\!D^2u)}{\det({\cal A})}\leq \frac 1{n^n}\cdot \frac {({\bf Tr}(-{\cal A}\cdot\!D^2u))^n}{\det({\cal A})}=\frac 1{n^n}\cdot \frac {({\cal L}_0u)^n}{\det({\cal A})},
$$
and (\ref{eq:1.1}) follows.
\end{proof}

\begin{rem} \label{rem:pmAB-1} Since the inequalities $u>0$ и ${\cal L}_0u\geq0$ hold on the set ${\cal Z}$, one often uses the more convenient estimate
\begin{equation} \label{eq:1.3}
\int\limits_{\Phi(\widetilde\Omega)}\mathfrak{g}(p)\,dp\leq \frac 1{n^n}\int\limits_
{\{u>0\}}\mathfrak{g}(Du)\cdot\frac {({\cal L}_0u)_+^n}{\det({\cal A})}\,dx
\end{equation}
instead of (\ref{eq:1.1}).
\end{rem}

\begin{thm} \label{thm:pmAB-1} Let the condition (\ref{eq:ellipt}) hold, and let ${\bf Tr}({\cal A})>0$ almost everywhere in $\Omega$. Then any function $u\in W^2_{n,{\rm loc}}(\Omega)$ such that\footnote{This means that for any $\varepsilon>0$ the inequality $u-\varepsilon<0$ holds in some neighborhood of $\partial\Omega$. 
\label{foot:28}} $u\bigr|_{\partial\Omega}\leq0$ satisfies the estimate
\begin{equation} \label{eq:1.4}
(\max\limits_{\overline\Omega}u_+)^n\leq \frac {\mbox {\rm diam}^n(\Omega)}
{n^n|B_1|}\int\limits_{\cal Z}\frac {({\cal L}_0u)^n}{\det({\cal A})}\,dx
\end{equation}
(here and below, we set $0/0=0$ if such uncertainty arises).
\end{thm}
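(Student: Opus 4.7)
The plan is to combine the integral estimate of Lemma~\ref{lem:pmAB-1} (with the simplest choice $\mathfrak g\equiv 1$) with a geometric lower bound for the Lebesgue measure of the normal image $\Phi(\widetilde\Omega)$. Writing $M:=\max_{\overline\Omega}u_+$ and assuming $M>0$ (otherwise \eqref{eq:1.4} is trivial), the key geometric fact, which is the heart of the Aleksandrov--Bakelman idea, is the inclusion
\begin{equation*}
B_{M/\mathrm{diam}(\Omega)}(0)\ \subset\ \Phi(\widetilde\Omega).
\end{equation*}

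To prove this inclusion, fix $x^*\in\Omega$ with $u(x^*)=M$, so that $z(x^*)\ge M$ while $z\equiv 0$ on $\partial\widetilde\Omega$, and take any $p\in\mathbb R^n$ with $|p|<M/\mathrm{diam}(\Omega)$. Let $x_0\in\overline{\widetilde\Omega}$ maximize the upper semicontinuous concave function $x\mapsto z(x)-p\cdot x$. Suppose, for contradiction, that $x_0\in\partial\widetilde\Omega$: then $z(x_0)=0$, and comparison with the competitor $x^*$ gives $-p\cdot x_0\ge M-p\cdot x^*$, whence $p\cdot(x^*-x_0)\ge M$; but $|p\cdot(x^*-x_0)|\le|p|\,\mathrm{diam}(\widetilde\Omega)=|p|\,\mathrm{diam}(\Omega)<M$, a contradiction. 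Hence $x_0$ is interior, and by the maximum property the plane $\pi(x)=z(x_0)+p\cdot(x-x_0)$ supports the subgraph of $z$ at $x_0$ from above, i.e.\ $p\in\Phi(x_0)\subset\Phi(\widetilde\Omega)$. Applying Lemma~\ref{lem:pmAB-1} with $\mathfrak g\equiv 1$,
\begin{equation*}
|B_1|\bigl(M/\mathrm{diam}(\Omega)\bigr)^n\ \le\ |\Phi(\widetilde\Omega)|\ \le\ \frac 1{n^n}\int_{\mathcal Z}\frac{(\mathcal L_0 u)^n}{\det(\mathcal A)}\,dx,
\end{equation*}
and \eqref{eq:1.4} follows at once after rearranging the $n$-th powers of $\mathrm{diam}(\Omega)$.

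The main technical obstacle is that Lemma~\ref{lem:pmAB-1} is formulated under uniform ellipticity \eqref{eq:uniell}, $\mathcal C^{1,1}$-smoothness of both $\Omega$ and $u$, and the \emph{strict} boundary condition $u|_{\partial\Omega}<0$, whereas the theorem weakens these to \eqref{eq:ellipt} with $\mathbf{Tr}(\mathcal A)>0$ a.e., to $u\in W^2_{n,\mathrm{loc}}(\Omega)$, and to $u|_{\partial\Omega}\le 0$ in the sense of footnote~\ref{foot:28}. This is handled by a routine triple approximation: for small $\varepsilon>0$, footnote~\ref{foot:28} allows us to pick a $\mathcal C^{1,1}$-smooth subdomain $\Omega'_\varepsilon\Subset\Omega$ with $\{u>\varepsilon\}\Subset\Omega'_\varepsilon$, on which $u-\varepsilon$ is strictly negative near the boundary; we mollify $u-\varepsilon$ to a $\mathcal C^{1,1}$ function and replace $\mathcal A$ by $\mathcal A_\delta=\mathcal A+\delta I$ to restore uniform ellipticity. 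Lemma~\ref{lem:pmAB-1} then applies; one first passes to the limit in the mollification parameter via $W^2_n$-convergence, then lets $\delta\to 0$ using dominated convergence on $\{\det\mathcal A>0\}\cap\mathcal Z$ (the $0/0=0$ convention renders the estimate trivially true on the complement, either because the right-hand side of the $\delta$-estimate blows up or because $\mathcal L_0 u=0$ there), and finally lets $\varepsilon\to 0$, thereby recovering \eqref{eq:1.4} in the full stated generality.
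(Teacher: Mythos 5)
Your proposal follows essentially the same route as the paper: prove the inclusion $B_{M/\mathrm{diam}(\Omega)}\subset\Phi(\widetilde\Omega)$ by a supporting-plane argument, invoke Lemma~\ref{lem:pmAB-1} with $\mathfrak g\equiv1$, and then remove the auxiliary smoothness/ellipticity/strict-sign hypotheses by approximation ($u-\varepsilon$, mollification, $\mathcal A+\delta I$, and limits). The geometric step is identical in substance; you merely make explicit the choice of the tangency point as a maximizer of $z-p\cdot x$, which the paper phrases as ``choosing an appropriate $h$.''

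One technical point is worth flagging. You replace $\mathcal A$ by $\mathcal A_\delta=\mathcal A+\delta I$ ``to restore uniform ellipticity,'' but under \eqref{eq:ellipt} alone the entries of $\mathcal A$ need not be bounded, so $\mathcal A+\delta I$ is bounded below by $\delta$ but may fail the \emph{upper} bound in \eqref{eq:uniell}, and Lemma~\ref{lem:pmAB-1} would not apply. The paper closes this gap by first observing that the integrand $(\mathcal L_0 u)^n/\det(\mathcal A)$ is invariant under multiplication of $\mathcal A$ by a positive scalar function, so one may normalize $\mathbf{Tr}(\mathcal A)\equiv1$; since $\mathcal A$ is PSD this bounds all entries, and then $\mathcal A+\delta I$ is genuinely uniformly elliptic. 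Inserting that normalization before your $\delta$-regularization makes your argument complete, and the remaining passages to the limit are as you describe.
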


\begin{proof}  Let us first assume that the matrix ${\cal A}$, the function $u$, and the domain $\Omega$ satisfy the conditions of Lemma \ref{lem:pmAB-1}. It suffices to consider the case when $M=\max\limits_{\overline\Omega}u=\max\limits_{\widetilde\Omega}z>0$.

We set $d=\mbox {\rm diam}(\Omega)=\mbox {\rm diam}(\widetilde\Omega)$ and claim that the set $\Phi(\widetilde\Omega)$ contains the ball $B_{M/d}$. Indeed, let $p\in B_{M/d}$. Consider the graph of the function $\pi(x)=p\cdot x+h$. By choosing an appropriate $h$, we can ensure that this is a supporting plane to the subgraph of the function $z$ at some point $x^0$, and write $\pi(x)=p\cdot(x-x^0)+z(x^0)$. 

If $x^0\in\partial\widetilde\Omega$ then $z(x^0)=0$, and the maximum point of $z$ satisfies
$$
M=z(x)\leq p\cdot(x-x^0)\leq |p|\cdot d<M,
$$
which is a contradiction. Therefore, $x^0\in\widetilde\Omega$, whence 
$$
p=Dz(x^0)=\Phi(x^0)\in\Phi(\widetilde\Omega),
$$
and the claim follows.
 
We use the estimate (\ref{eq:1.1}) with $\mathfrak{g}\equiv1$ and obtain
$$
|B_1|\cdot \left(\frac Md\right)^n=|B_{M/d}|\leq |\Phi(\widetilde\Omega)|\leq \frac 1{n^n}\int\limits_{\cal Z}\frac {({\cal L}_0u)^n}{\det({\cal A})}\,dx,
$$
which immediately implies (\ref{eq:1.4}).\medskip

Consider now the general case. The integrand in (\ref{eq:1.4}) does not change if the matrix ${\cal A}$ is multiplied by a positive function. Therefore, without loss of generality, we can assume that ${\bf Tr}({\cal A})\equiv1$. Let us take the function $u^{\varepsilon}=u-\varepsilon$ and approximate $\Omega$ from the inside by domains with smooth boundaries. Further, since the estimate (\ref{eq:1.4}) keeps under a passage to the limit in $W^2_n$, we can assume that $u^{\varepsilon}$ is a smooth function. We apply the estimate (\ref{eq:1.4}) to the function $u^{\varepsilon}$ and the uniformly elliptic operator ${\cal L}_0-\nu\Delta$. Then we can push $\nu\to0$ and then $\varepsilon\to0$. 
\end{proof}

\begin{thm} \label{thm:pmAB-3} Let ${\cal L}$ be an operator of the form (\ref{eq:nondiv_operator}), let the assumption (\ref{eq:ellipt}) hold, and let ${\bf Tr}({\cal A})>0$ almost everywhere in $\Omega$. Assume that
\begin{equation} \label{eq:frak-h}
\mathfrak{h}\equiv \frac {|{\bf b}|}{\det^{\frac 1n}({\cal A})}\in L_n(\Omega).
\end{equation}
Then the estimate
\begin{equation} \label{eq:1.5}
\max\limits_{\overline\Omega}u_+\leq N\left(n, \|\mathfrak{h}\|_{n,\{u>0\}}\right)\cdot\mbox {\rm diam}(\Omega)\left\|\frac {({\cal L}u)_+}{\det^{\frac 1n}({\cal A})}\right\|_{n,\{u>0\}}
\end{equation}
holds true for any function $u$ satisfying the assumptions of Theorem~\ref{thm:pmAB-1}.
\end{thm}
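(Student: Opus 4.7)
The plan is to reduce the proof to an application of Lemma \ref{lem:pmAB-1} (in the form (\ref{eq:1.3}) of Remark \ref{rem:pmAB-1}) with a carefully chosen weight function $\mathfrak{g}$, treating the lower-order term as a perturbation of ${\cal L}_0$. Since ${\cal L}_0u={\cal L}u-b^iD_iu$, we have the pointwise estimate $({\cal L}_0u)_+\leq({\cal L}u)_++|{\bf b}|\cdot|Du|$, hence
$$
\frac{({\cal L}_0u)_+^n}{\det({\cal A})}\leq\left(\frac{({\cal L}u)_+}{\det^{1/n}({\cal A})}+\mathfrak{h}\cdot|Du|\right)^{\!n}.
$$
The task is thus to absorb the factor $|Du|$ into a weight. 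The natural choice is $\mathfrak{g}(p)=(\mu+|p|)^{-n}$ with a free parameter $\mu>0$ to be optimized at the end; the purpose of this weight is that $|Du|/(\mu+|Du|)\leq1$ while $1/(\mu+|Du|)\leq 1/\mu$, giving control of both summands at once.

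First, following the scheme of the proof of Theorem \ref{thm:pmAB-1}, I would reduce to the regular situation by regularizing ${\cal A}\rightsquigarrow{\cal A}+\nu I$, smoothing $u$, and approximating $\Omega$ from inside, so that Lemma \ref{lem:pmAB-1} is directly applicable; the final estimate will be preserved under the limits $\nu\to0$, $\varepsilon\to0$. We may assume $M:=\max_{\overline\Omega}u_+>0$ and that the right-hand side of (\ref{eq:1.5}) is finite and nonzero, the degenerate cases being treated by the quoted passage to the limit. For the lower bound in (\ref{eq:1.3}), I would use the geometric fact (already established in the proof of Theorem \ref{thm:pmAB-1}) that $\Phi(\widetilde\Omega)\supset B_{M/d}$, where $d=\mathrm{diam}(\Omega)$; passing to polar coordinates gives a logarithmic lower bound,
$$
\int_{\Phi(\widetilde\Omega)}\!\frac{dp}{(\mu+|p|)^n}\ \geq\ n|B_1|\!\int_0^{M/d}\!\frac{r^{n-1}}{(\mu+r)^n}\,dr\ \geq\ c(n)\,\log\!\Bigl(1+\tfrac{M}{d\mu}\Bigr).
$$

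For the upper bound, on the set $\{u>0\}$ I would combine the pointwise inequality above with the elementary $(a+b)^n\leq 2^{n-1}(a^n+b^n)$ and the two trivial bounds on $\mu/(\mu+|Du|)$ and $|Du|/(\mu+|Du|)$ to obtain
$$
\frac{1}{(\mu+|Du|)^n}\cdot\frac{({\cal L}_0u)_+^n}{\det({\cal A})}\ \leq\ 2^{n-1}\!\left[\frac{1}{\mu^n}\cdot\frac{({\cal L}u)_+^n}{\det({\cal A})}+\mathfrak{h}^n\right].
$$
Integrating over $\{u>0\}$ and combining with the lower bound yields
$$
c(n)\log\!\Bigl(1+\tfrac{M}{d\mu}\Bigr)\ \leq\ C(n)\!\left[\mu^{-n}\Bigl\|\tfrac{({\cal L}u)_+}{\det^{1/n}({\cal A})}\Bigr\|_{n,\{u>0\}}^n+\|\mathfrak{h}\|_{n,\{u>0\}}^n\right].
$$
The choice $\mu=\|({\cal L}u)_+/\det^{1/n}({\cal A})\|_{n,\{u>0\}}$ balances the two terms on the right and reduces the inequality to $\log(1+M/(d\mu))\leq C(n)(1+\|\mathfrak{h}\|_{n,\{u>0\}}^n)$; exponentiating gives precisely the bound (\ref{eq:1.5}) with $N(n,\|\mathfrak{h}\|_{n,\{u>0\}})=\exp\bigl(C(n)(1+\|\mathfrak{h}\|_{n,\{u>0\}}^n)\bigr)$.

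The main obstacle I anticipate is pinning down the \emph{right} weight: any power-type choice $\mathfrak{g}(p)=|p|^{-\alpha}$ fails either to absorb the gradient term or to produce a finite integral over $B_{M/d}$, and only the logarithmic growth generated by the shifted weight $(\mu+|p|)^{-n}$ allows the constant $N$ to depend on $\|\mathfrak{h}\|_n$ alone (at the price of exponential growth in this quantity, which is known to be optimal). The remaining technical care lies in the approximation step used to justify (\ref{eq:1.3}) in the setting of $W^2_{n,\mathrm{loc}}$ functions on a merely measurable, possibly degenerate matrix ${\cal A}$, but this parallels the argument already carried out in Theorem \ref{thm:pmAB-1}.
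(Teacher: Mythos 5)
Your proof is correct and takes essentially the same route as the paper: both apply (\ref{eq:1.3}) with a tuned radial weight that produces a logarithm via $B_{M/d}\subset\Phi(\widetilde\Omega)$, the paper choosing $\mathfrak{g}(\rho)=(F^n+\rho^n)^{-1}$ with $F=\|({\cal L}u)_+/\det^{1/n}({\cal A})\|_{n,\{u>0\}}+\varepsilon$ and you choosing $\mathfrak{g}(p)=(\mu+|p|)^{-n}$ with $\mu$ optimized at the end, and both absorb the $b^iD_iu$ term by an elementary convexity inequality. One small imprecision: $\int_0^{M/d}r^{n-1}(\mu+r)^{-n}\,dr\geq c(n)\log(1+M/(d\mu))$ fails when $M/(d\mu)$ is small (the integral is then $\sim (M/(d\mu))^n$, not $\sim M/(d\mu)$), whereas the paper's weight makes the logarithm exact; this costs nothing since in that regime (\ref{eq:1.5}) is trivial, but the case split should be noted.
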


\begin{proof}
We can assume that the matrix ${\cal A}$, the function $u$, and the domain $\Omega$ satisfy the conditions of Lemma~\ref{lem:pmAB-1}. The general case is obtained from this particular one analogously to the second part of the proof of Theorem~\ref{thm:pmAB-1}. 
 \medskip

Let $\mathfrak{g}=\mathfrak{g}(|p|)$. Taking into account the inclusion $B_{M/d}\subset\Phi(\widetilde\Omega)$, we obtain from (\ref{eq:1.3})
\begin{equation} \label{eq:1.7}
n|B_1|\cdot\int\limits_0^{M/d}\mathfrak{g}(\rho)\rho^{n-1}\,d\rho
\leq\frac 1{n^n}\int\limits_{\{u>0\}}\mathfrak{g}(|Du|)\cdot \frac {({\cal L}u-b^iD_iu)_+^n}{\det({\cal A})}\,dx.
\end{equation}

We introduce the notation
$$
F=\left\|\frac {({\cal L}u)_+}{\det^{\frac 1n}({\cal A})}\right\|_{n,\{u>0\}}+\varepsilon,\quad \varepsilon>0.
$$
Then the quotient in the right-hand side of (\ref{eq:1.7}) can be estimated by the H\"older inequality:
$$
\frac {({\cal L}u-b^iD_iu)_+^n}{\det({\cal A})}
\leq(F^{\frac n{n-1}}+|Du|^{\frac n{n-1}})^{n-1}\cdot \left(\frac {({\cal L}u)_+^n}{\det({\cal A})F^n}+\mathfrak{h}^n\right).
$$
Now we put $\mathfrak{g}(\rho)=(F^n+\rho^n)^{-1}$. Then (\ref{eq:1.7}) implies
$$
n|B_1|\int\limits_0^{M/d}\frac {\rho^{n-1}}{F^n+\rho^n}\,d\rho
\leq\frac 1{n^n}\int\limits_{\{u>0\}}\!\!\frac {(F^{\frac n{n-1}}+|Du|^{\frac n{n-1}})^{n-1}}{F^n+|Du|^n}\cdot \left(\frac {({\cal L}u)_+^n}{\det({\cal A})F^n}+\mathfrak{h}^n\right)dx.
$$
By the elementary inequality
$(x+y)^{n-1}\leq 2^{n-2}(x^{n-1}+y^{n-1})$ we deduce
$$
\ln\left(1+\tfrac {M^n}{d^nF^n}\right)\leq \tfrac {2^{n-2}}{n^n|B_1|}\left(1+\|\mathfrak{h}\|^n_{n,\{u>0\}}\right),
$$
and therefore,
$$
M\leq d\cdot F\,\left(\exp\left(\tfrac {2^{n-2}}{n^n|B_1|}\left(1+
\|\mathfrak{h}\|^n_{n,\{u>0\}}\right)\right)-1\right)^{\frac 1n}.
$$
Pushing $\varepsilon\to0$ in the expression for $F$, we arrive at (\ref{eq:1.5}).
\end{proof}

\begin{rem} \label{rem:pmAB-2} If the uniform ellipticity condition (\ref{eq:uniell}) is fulfilled then, in view of Remark \ref{rem:pmAB-1}, (\ref{eq:1.5}) implies the simpler estimate:
\begin{equation} \label{eq:1.6}
\max\limits_{\overline\Omega}u_+\leq N\left(n,\tfrac {\|{\bf b}\|_{n,\{u>0\}}}{\nu}\right)\cdot\frac {\mbox {\rm diam}(\Omega)}{\nu}\cdot\|({\cal L}u)_+\|_{n,\{u>0\}}.
\end{equation}
\end{rem}


\begin{rem} \label{rem:pmAB-3} For uniformly elliptic operators of the form (\ref{eq:nondiv_operator}), the following Hopf's maximum estimate is well known (see, e.g., \cite[Theorem 3.7]{GTr1983}): 
$$
\max\limits_{\overline\Omega}u_+\leq C\left(\mbox {\rm diam}(\Omega),\tfrac {\|{\bf b}\|_{\infty,\{u>0\}}}{\nu}\right)\cdot\frac 
{\|({\cal L}u)_+\|_{\infty,\{u>0\}}}{\nu}.
$$
Here, the maximum of the solution is estimated in terms of the $L_{\infty}$-norm of the right-hand side, which turns out to be insufficient in some applications. 

On the other hand, coercive estimates in $L_r$ (\cite[Theorem 9.13]{GTr1983}) together with the Sobolev embedding theorem imply
\begin{equation}
\label{eq:1.9}
\max\limits_{\overline\Omega}u_+\leq C\cdot\|({\cal L}_0u)_+\|_{r,\Omega}, \qquad r>n/2.
\end{equation}
However, in this estimate the constant $C$ {\bf\textit{depends on the continuity moduli of the coefficients}} $a^{ij}$. Therefore, for example, for quasilinear equations, where the coefficients $a^{ij}$ depend on the solution $u$ itself and on its derivatives, the estimate (\ref{eq:1.9}) is of little use. 

The Aleksandrov--Bakelman estimate differs in that it requires neither the continuity of the principal coefficients nor the boundedness of the lower-order coefficients and the right-hand side of the equation. 
\end{rem}

In connection with Theorem \ref{thm:pmAB-3}, we mention the so-called {\bf Bony type maximum principle}. 
\medskip

\noindent\textit{Let ${\cal L}$ be an operator of the form (\ref{eq:nondiv_operator}), and let the assumption (\ref{eq:ellipt}) be satisfied. If a function $u$ attains its minimum at the point $x^0\in\Omega$, then the inequality ${\rm ess} \liminf\limits_{x\to x^0} {\cal L}u\leq 0$ holds true.
}
\medskip 

This statement was proved for operators with bounded coefficients in \cite{Bo1967} (for $u\in W^2_q(\Omega)$ with any $q>n$) and
in \cite{Ls1983} (for $u\in W^2_n(\Omega)$).\footnote{In \cite{Ls1983}, a stronger property was proved: 
$$
{\rm ess} \liminf\limits_{x\to x^0} |Du|=0; \qquad {\rm ess} \liminf\limits_{x\to x^0} D^2u\geq 0
$$
 (the second relation is understood in the sense of quadratic forms).
However, for operators with unbounded coefficients, the relation (\ref{eq:1.8}) does not follow directly from this.} We prove its variant for operators with unbounded lower-order coefficients. 

\begin{cor}
Assume that the coefficients of the operator ${\cal L}$ satisfy the conditions of Theorem \ref{thm:pmAB-3}. If a function $u\in W^2_{n,{\rm loc}}(\Omega)$ attains its minimum at the point $x^0\in\Omega$, then 
\begin{equation} \label{eq:1.8}
{\rm ess} \liminf\limits_{x\to x^0} \frac {{\cal L}u}{{\bf Tr}({\cal A})}\leq 0.
\end{equation}
\end{cor}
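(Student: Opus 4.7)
The plan is to argue by contradiction via the Aleksandrov--Bakelman estimate of Theorem~\ref{thm:pmAB-3}. If (\ref{eq:1.8}) fails, then there exist $\delta>0$ and $r_1>0$ with $\overline{B_{r_1}(x^0)}\subset\Omega$ such that ${\cal L}u\geq\delta\,{\bf Tr}({\cal A})$ a.e.\ in $B_r(x^0)$ for every $r\in(0,r_1)$. The strategy is to perturb $u$ by a small concave quadratic anchored at $x^0$, turning the interior minimum into a genuine negative dip of size $\sim r^2$; applying ABP to the negative of the perturbed function should then yield an upper bound on this dip of order $r^2\cdot\|\mathfrak{h}\|_{n,B_r(x^0)}$, which defeats $r^2$ as $r\to 0$ and gives the contradiction.

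Concretely, I would take
$$
w(x)\,=\,u(x)-u(x^0)+\tfrac{\delta}{4}\bigl(|x-x^0|^2-r^2\bigr),\qquad r\in(0,r_1).
$$
Then $w(x^0)=-\delta r^2/4<0$, while $w\geq 0$ on $\partial B_r(x^0)$ since $x^0$ is a minimum of $u$, so $-w$ is admissible for Theorem~\ref{thm:pmAB-3} on the ball $B_r(x^0)$. A short computation will give
$$
{\cal L}w\,=\,{\cal L}u-\tfrac{\delta}{2}{\bf Tr}({\cal A})+\tfrac{\delta}{2}b^i(x_i-x^0_i)\,\geq\,\tfrac{\delta}{2}\,{\bf Tr}({\cal A})-\tfrac{\delta r}{2}|{\bf b}|,
$$
so that $({\cal L}(-w))_+/\det^{1/n}({\cal A})\leq (\delta r/2)\,\mathfrak{h}$ almost everywhere in $B_r(x^0)$. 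Plugging this into~(\ref{eq:1.5}) and using $(-w)(x^0)=\delta r^2/4$ should produce, after dividing through by $\delta r^2$, the inequality
$$
\tfrac{1}{4}\,\leq\, N\bigl(n,\|\mathfrak{h}\|_{n,B_r(x^0)}\bigr)\cdot\|\mathfrak{h}\|_{n,B_r(x^0)},
$$
whose right-hand side tends to $0$ with $r$ by absolute continuity of the $L_n$-integral, while $N$ stays bounded.

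The one step requiring mild care is verifying the boundary condition on $-w$ in the sense of footnote~\ref{foot:28}: one needs $-w<\varepsilon$ on a neighborhood of $\partial B_r$ for every $\varepsilon>0$, which should follow from the continuity of $u$ (Sobolev embedding $W^2_n\hookrightarrow{\cal C}^0$ on balls) together with the elementary bound $|x-x^0|^2-r^2\geq -2r\delta'$ on the $\delta'$-annulus near $\partial B_r$. The heart of the matter is the coefficient $\delta/4$ in the perturbation: it must be small enough that the favourable ${\bf Tr}({\cal A})$-term survives, and the extra factor $r$ in front of $|{\bf b}|$ is precisely what lets the unbounded drift be absorbed into $\|\mathfrak{h}\|_{n,B_r(x^0)}$, which vanishes in the limit. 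This also clarifies why the hypothesis $\mathfrak{h}\in L_n(\Omega)$ enters crucially.
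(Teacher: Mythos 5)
Your proof is correct and takes essentially the same approach as the paper: apply the Aleksandrov--Bakelman estimate of Theorem~\ref{thm:pmAB-3} to a quadratic perturbation of $u$ centered at $x^0$, and let the smallness of $\|\mathfrak{h}\|_{n,B_r}$ on a shrinking neighborhood kill the right-hand side. The only cosmetic difference is that the paper keeps the perturbation amplitude $\varepsilon$ and the radius $r$ as independent parameters and sends $\varepsilon\to0$ at fixed $r$ (so the truncation $(\mathfrak{h}-r\delta/(4\varepsilon))_+$ tends to zero in $L_n$ by dominated convergence), whereas you lock $\varepsilon=\delta r^2/4$ and send $r\to0$, invoking absolute continuity of the $L_n$-integral over shrinking balls.
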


\begin{proof} 
As in Theorem \ref{thm:pmAB-1}, we can assume without loss of gen\-er\-al\-i\-ty that ${\bf Tr}({\cal A})\equiv1$. Let us place the origin at the point $x^0$.

We proceed by contradiction. Suppose that in some neighborhood of the origin the inequality ${\cal L}u\geq\delta>0$ holds almost everywhere. Consider the function 
$$
w^\varepsilon(x)=\varepsilon\Big(1-\frac {|x|^2}{r^2}\Big)-u(x)+u(0)
$$
in the ball $B_r$. Then $w^\varepsilon(0)=\varepsilon$, and we have $w^\varepsilon|_{\partial B_r}\leq0$ for sufficiently small $r$. Applying the estimate (\ref{eq:1.6}) to $w^\varepsilon$ in $B_r$ we obtain
$$
\varepsilon\leq N\left(n,\|\mathfrak{h}\|_{n,B_r}\right)\cdot 2r\cdot \left\|\frac {({\cal L}w^\varepsilon)_+}{\det^{\frac 1n}({\cal A})}\right\|_{n,B_r}.
$$
Since 
$${\cal L}w^\varepsilon=\frac {2\varepsilon}{r^2}\big({\bf Tr}({\cal A})+b^ix_i\big)-{\cal L}u\leq \frac {2\varepsilon}{r^2}\big(1+r|{\bf b}|\big)-\delta,
$$
we have for $\varepsilon<\frac {\delta r^2}4$
$$
\varepsilon\leq N \left\|\frac {(4\varepsilon|{\bf b}|-r\delta)_+}{\det^{\frac 1n}({\cal A})}\right\|_{n,B_r}\stackrel{(*)}{\leq} 4\varepsilon N \left\|\Big(\mathfrak{h}-\frac {r\delta}{4\varepsilon
}\Big)_+\right\|_{n,B_r}=o(\varepsilon)\quad \mbox{as} \quad\varepsilon\to 0
$$
(here the inequality ($*$) follows from the relation $\det^{\frac 1n}({\cal A})\leq{\bf Tr}({\cal A})=1$). This contradiction proves (\ref{eq:1.8}).
\end{proof}

A.D.~Aleksandrov repeatedly developed and improved the results of \cite{Al1963E}. In \cite{Al1966E}, {\bf\textit{pointwise}} estimates of solutions to the Dirichlet problems are ob\-tained in terms of the distance to the  boundary of the domain; in \cite{Al1966aE} these estimates are extended to a wider class of equations. The paper \cite{Al1966bE} is devoted to proving the attainability of the obtained estimates, while a short note \cite{Al1966dE} shows that {\bf\textit{in the general case}} it is impossible to relax the requirements on the right-hand side of the equation. Finally, in \cite{Al1967E}, pointwise estimates for the solution in terms of fine characteristics of the domain $\Omega$ are obtained. Based on this result, a gradient estimate for the solution on $\partial\Omega$ is obtained for some special cases (a summary of these results is given in \cite{Al1967bE}, \cite{Al1967aE}). 
\medskip

In the mid-1970s, N.V.~Krylov (\cite{Kr1974E}--\nocite{Kr1976aE}\cite{Kr1976E}) first obtained an Alek\-san\-drov--Bakelman type estimate for parabolic operators. After that, the study of elliptic and parabolic problems proceeded almost in parallel, but the discussion of the results for nonstationary equations is beyond the scope of our survey.\medskip

Later, the techniques based on the normal image was applied to other boundary value problems. For the {\bf oblique derivative problem} where a non-tangential directional derivative is given on the boundary, a local Aleksandrov type maximum estimate was established in \cite{Nd1988E} for bounded coefficients $b^i$ and in \cite{Na1990E} for the general case (see also \cite{Ch1997} and \cite{Li2000}). 

For the {\bf Venttsel problem} where a second order operator in tangential variables 
$$
{\cal L}' \equiv -\alpha^{ij}(x)\mathfrak{d}_i \mathfrak{d}_j+\beta^i(x)D_i,\qquad \mathfrak{d}_i\equiv D_i-{\bf n}_i{\bf n}_kD_k,\qquad \beta^i(x){\bf n}_i\leq0,
$$
is given on the boundary, the corresponding estimates were obtained in \cite{Lu1993}, \cite{LuTr1991} in the non-degenerate case (the operator ${\cal L }'$ is uniformly elliptic with respect to tangential variables) and in the degenerate case (the second-order terms in the boundary operator can vanish on a set of positive measure but the vector field $(\beta^i)$ is non-tangential to $\partial\Omega$). Later these estimates were generalized to the case of operators ${\cal L}$ and ${\cal L}'$ with unbounded lower-order coefficients \cite{AN1995}, \cite{AN1997E}. In the paper \cite{AN2001}, local Aleksandrov type estimates were established for solutions to the so-called {\bf two-phase} Venttsel problem. In all these cases, these estimates served as a ``launching pad'' for obtaining a series of a priori estimates required to prove existence theorems for solutions of quasilinear and fully nonlinear boundary value problems. 
\medskip

Another direction in the development of Aleksandrov's ideas is the trans\-fer of the maximum estimates to equations with lower-order coefficients and right-hand sides from other functional classes. The papers \cite{AN1995aE}, \cite{Li2000}, and \cite{Na2002E} dealt with various classes of operators with ``composite'' coefficients. The article \cite{Na2001E} is devoted to an Aleksandrov type estimate in terms of the norms of the right-hand side in the weighted Lebesgue spaces. Each of these results led to a corresponding extension of the class of nonlinear equations for which one can prove the solvability of the basic boundary value problems.\medskip

L.~Caffarelli \cite{Cf1988} established the Aleksandrov--Bakelman estimate for the so-called {\bf viscosity solutions} of elliptic equations. Later this idea was ac\-tive\-ly applied to various classes of nonlinear equations (see, e.g., \cite{Cf1989}, \cite[Ch.3]{CfCb1995}, \cite{DFQ2009}, \cite{I2011}, \cite{ACP2011}, \cite{CPCM2013}).
\medskip

A further group of papers is devoted to weakening the conditions on the right-hand side of the equation for {\bf\textit {certain classes}} of operators. In 1984, E.~Fabes and D.~Stroock \cite{FS1984} obtained the estimate (\ref{eq:1.9}) for operators with measurable principal coefficients under the assumption $r>r_0$, where $r_0<n$ is an exponent depending on the ellipticity constant of the operator (see also \cite{Fr1989}). In \cite{KNd2001} and \cite{Li2003} this estimate was established for the oblique derivative problem. On the other hand, C.~Pucci \cite{Pu1966b} introduced the concept of {\bf maximal and minimal operators}. Using this concept he established a lower bound for the values of $r_0$ allowing such an estimate (in this connection see \cite{ Pu1986E} and references therein). Necessary and sufficient conditions for the fulfillment of (\ref{eq:1.9}) are obtained only in the two-dimensional case \cite{AIM2006}. In some papers (see \cite{Ko2013} and references therein), the results of \cite{FS1984} have been extended to viscosity solutions of nonlinear equations. 

In \cite{KuTr2007}, a series of maximum estimates for the solution was established in terms of the $L_m$-norm of the right-hand side (here $m\in(n/2,n]$ is an integer) under the assumption that the matrix of principal coefficients belongs to some special convex cone in the space of matrices (for almost all $x\in\Omega$). Among recent advances in this direction, we mention the paper \cite{Tr2020} by N.S.~Trudinger. Undoubtedly, these studies are still far from complete. \medskip

We should also quote the paper \cite{Cb1995}, which studies the dependence of the maximum estimate on the domain characteristics. In particular, the author managed to obtain an estimate in terms of $|\Omega|^{\frac 1n}$ instead of the domain diameter (notice that for {\it convex} domains this was already done in \cite{Al1963E}).

We also mention the paper \cite{KuTr2000}, in which a discrete analogue of the Aleksandrov--Bakelman estimate for difference operators was obtained.

\subsection{Results for operators with coefficients $b^i(x)$ \\in Lebesgue spaces }\label{ss:2.4}

The simplest consequence of the Aleksandrov--Bakelman estimate is the weak maximum principle for functions $u\in W^2_n(\Omega )$ and operators of the form (\ref{eq:nondiv_operator}) with $b^i\in L_n(\Omega)$. Moreover, as pointed out already in \cite{Al1963E}, this estimate allows us to consider the operator ${\cal L}+c(x)$ with coefficient~$c(x)$ of ``bad sign''. 

\begin{cor}
Assume that the coefficients of the operator ${\cal L}$ satisfy the assumptions of Theorem~\ref{thm:pmAB-3}. Then there exists $\delta>0$ depending only on $n$, ${\rm diam}(\Omega)$ and $\|\mathfrak{h}\|_{n,\Omega}$ (the function $\mathfrak{h}$ is introduced in (\ref{eq:frak-h})) such that if
$$
h\equiv \frac {c_-}{\det^{\frac 1n}({\cal A})}\in L_n(\Omega), \qquad \|h\|_{n,\Omega}<\delta
$$
(recall that we set $0/0=0$ if such uncertainty arises), then the weak maximum principle holds for the operator ${\cal L}+c(x)$ and functions $u\in W^2_{n,{\rm loc}}(\Omega)$. 
\end{cor}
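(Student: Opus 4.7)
The plan is to reduce the weak maximum principle to Theorem~\ref{thm:pmAB-3} by an absorption argument. Let $u \in W^2_{n,{\rm loc}}(\Omega)$ satisfy $(\mathcal{L}+c)u \geq 0$ in $\Omega$ with $u\bigr|_{\partial\Omega} \geq 0$ (understood in the sense of footnote \ref{foot:28}). Setting $w = -u$, I obtain $(\mathcal{L}+c)w \leq 0$ in $\Omega$ with $w\bigr|_{\partial\Omega} \leq 0$, and it suffices to show $M := \max_{\overline{\Omega}} w_+ = 0$.

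On the set $\{w > 0\}$ the inequality $\mathcal{L}w \leq -cw$ combined with $w>0$ yields
$$
(\mathcal{L}w)_+ \leq (-cw)_+ = c_- w.
$$
Dividing by $\det^{\frac{1}{n}}(\mathcal{A})$ (with the convention $0/0=0$) gives the pointwise bound
$$
\frac{(\mathcal{L}w)_+}{\det^{\frac{1}{n}}(\mathcal{A})} \leq h\cdot w \quad \text{on } \{w>0\}.
$$
Applying Theorem~\ref{thm:pmAB-3} to $w$ and using the monotonicity of the constant $N(n,\cdot)$ in its second argument, I get
$$
M \leq N\bigl(n,\|\mathfrak{h}\|_{n,\Omega}\bigr) \cdot {\rm diam}(\Omega) \cdot \|h\cdot w\|_{n,\{w>0\}} \leq N\bigl(n,\|\mathfrak{h}\|_{n,\Omega}\bigr) \cdot {\rm diam}(\Omega) \cdot \|h\|_{n,\Omega}\cdot M.
$$

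Choosing
$$
\delta := \bigl(N(n,\|\mathfrak{h}\|_{n,\Omega})\cdot {\rm diam}(\Omega)\bigr)^{-1},
$$
the assumption $\|h\|_{n,\Omega} < \delta$ forces the coefficient in front of $M$ on the right-hand side to be strictly less than $1$, so $M=0$, i.e. $w \leq 0$ and hence $u \geq 0$ in $\Omega$. The only subtle point, which I would verify by inspecting the explicit expression for $N$ from the proof of Theorem~\ref{thm:pmAB-3}, is that $N$ is monotone non-decreasing in $\|\mathfrak{h}\|_n$, so that replacing $\|\mathfrak{h}\|_{n,\{w>0\}}$ by $\|\mathfrak{h}\|_{n,\Omega}$ is legitimate and yields a $\delta$ independent of the unknown solution $u$.
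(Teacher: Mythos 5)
Your proof is correct and follows essentially the same route as the paper: both apply Theorem~\ref{thm:pmAB-3} to $-u$, bound $({\cal L}(-u))_+$ pointwise by $c_-\cdot(-u) \le c_- M$ on the set where $-u>0$, and absorb the resulting $M$ into the left-hand side by taking $\|h\|_{n,\Omega}$ small. The paper applies the estimate to $u^\varepsilon=-u-\varepsilon$ and then sends $\varepsilon\to0$, a small technical variation that your version streamlines; your explicit remark that $N(n,\cdot)$ is non-decreasing in its second argument (which the paper uses tacitly) is a welcome clarification.
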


\begin{proof}
We proceed by contradiction. Suppose that ${\cal L}u+cu\geq0$ in $\Omega$ and $u\geq0$ on $\partial\Omega$, but $\min\limits_{\Omega} u=-A <0$. Apply the estimate (\ref{eq:1.5}) to the function $u^\varepsilon=-u-\varepsilon$. Since the inequality ${\cal L}u^\varepsilon=-{\cal L}u\leq cu\leq Ac_-$ holds on the set $\{u^\varepsilon>0\}$, we obtain 
\begin{equation*}
(A-\varepsilon)_+\leq N\left(n, \|\mathfrak{h}\|_{n,\Omega}\right)\cdot\mbox {\rm diam}(\Omega)\cdot\|h\|_{n,\Omega}\cdot A.
\end{equation*}
This is impossible if $N(n, \|\mathfrak{h}\|_{n,\Omega})\cdot\mbox {\rm diam}(\Omega)\cdot\|h\|_{n,\Omega}<1$ and $\varepsilon>0$ is small enough.
\end{proof}

It is easy to see that now the proof of Theorem~\ref{thm:HO} runs without changes for the so-called {\bf strong supersolutions}: $u\in W^2_n(\Omega)$, and ${\cal L}u+cu \geq0$ almost everywhere in $\Omega$ (the coefficients of the operator ${\cal L}$ are assumed to be measurable and bounded). However, in order to consider lower-order co\-ef\-fi\-cients in Lebesgue spaces, new ideas were needed. 

Note that one cannot reduce the assumption $b^i\in L_n(\Omega)$ to $b^i\in L_p(\Omega)$ with $p<n$. Indeed, the function $u(x)=|x|^2 $ satisfies the equation
$$
-\Delta u+\frac {nx_i}{|x|^2}\,D_iu=0 \quad \mbox{in} \quad B_1,
$$
but does not satisfy the maximum principle; the coefficients $b^i(x)=\frac {nx_i}{|x|^2}$ belong to the space $L_p(B_1)$ with any $p<n$ and even to the weak-$L_n$ space (the Lorentz space $L_{n,\infty}(B_1)$) but do not belong to $L_n(B_1)$. 
\medskip

The strong maximum principle for operators with $b^i\in L_n(\Omega)$ was es\-tab\-lished in \cite{Al1961E}. We prove the simplest version of this result\footnote{In \cite{Al1961E}, operators of the form ${\cal L}+c(x)$ are considered under the condition $c(x)\leq \frac {h(x)}{|x-x^0|}$, where $x^0$ is a (zero) minimum point of the function $u$, and $h\in L_n(\Omega)$. In addition, restrictions on the coefficients in this work can depend on the direction.}. 

\begin{thm}\label{thm:SMP-Al}
Let ${\cal L}$ be an operator of the form (\ref{eq:nondiv_operator}), let the condition (\ref{eq:uniell}) be satisfied, and let $b^i\in L_{n,{\rm loc} }(\Omega)$. Assume that $u\in W^2_{n,{\rm loc}}(\Omega)$, and ${\cal L}u\geq0$ almost everywhere in $\Omega$. If $u$ attains its minimum at an interior point of the domain, then $u \equiv const$ and ${\cal L} u \equiv 0$. 
\end{thm}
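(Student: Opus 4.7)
The plan is to adapt the Hopf barrier argument from Part~2 of the proof of Theorem~\ref{thm:HO}, using the Aleksandrov--Bakelman estimate (Theorem~\ref{thm:pmAB-3}) in place of the weak maximum principle for bounded coefficients. As a preliminary observation, the weak maximum principle for $\mathcal L$ with $b^i\in L_n$ holds without any smallness hypothesis: if $\mathcal L u\ge 0$ a.e.\ and $u\ge 0$ on $\partial\Omega$, applying (\ref{eq:1.5}) to $-u$ gives $\sup(-u)_+\le N\,\mathrm{diam}(\Omega)\,\|(\mathcal L(-u))_+\|_n = 0$.

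Assume for contradiction that $u\not\equiv\mathrm{const}$. Since $u\in W^2_{n,\mathrm{loc}}(\Omega)$ is continuous by Sobolev embedding, the minimum set $M$ from (\ref{eq:min-set}) is closed, nonempty, and a proper subset of $\Omega$. As in Theorem~\ref{thm:HO}, I enlarge a ball in the open set $\Omega\setminus M$ until its boundary touches $M$ at some $x^0$; place the origin at the center, call its radius $r$, and work in the annulus $\pi = B_r\setminus\overline{B_{r/2}}$. The Hopf barrier $v_s(x) = |x|^{-s}-r^{-s}$ from (\ref{eq:barrier}) satisfies, by the computation in Theorem~\ref{thm:HO},
\[
\mathcal L v_s(x)\le s|x|^{-s-2}\bigl[-(s+2)\nu + n\nu^{-1} + |{\bf b}(x)|\,|x|\bigr].
\]
For bounded $\bf b$ large $s$ makes this pointwise non-positive, but for $b^i\in L_n$ one only has $(\mathcal L v_s)_+\in L_n(\pi)$, which fails to make $u-\inf u-\varepsilon v_s$ super-elliptic pointwise.

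The remedy is to correct the barrier by an auxiliary function. Let $\phi\in W^2_{n,\mathrm{loc}}(\pi)\cap\mathcal C(\overline\pi)$ be the strong solution of the Dirichlet problem $\mathcal L\phi = (\mathcal L v_s)_+$ in $\pi$ with $\phi|_{\partial\pi}=0$ (whose solvability under $L_n$ coefficients follows from the Aleksandrov--Bakelman approach of Subsection~\ref{ss:2.3}). By the weak maximum principle $\phi\ge 0$, and by (\ref{eq:1.5}), $\|\phi\|_\infty\le N r\,\|(\mathcal L v_s)_+\|_{n,\pi}$, which becomes arbitrarily small once $r$ is taken so that $\|{\bf b}\|_{n,B_r}$ is small. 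The modified barrier $V = v_s - \phi$ then satisfies $\mathcal L V = -(\mathcal L v_s)_-\le 0$ a.e.\ in $\pi$, $V=0$ on $\partial B_r$, and $V>0$ on $\partial B_{r/2}$ (for $r$ small).

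Apply the classical Hopf argument to $w^\varepsilon = u-\inf u-\varepsilon V$: for $\varepsilon>0$ small, $w^\varepsilon\ge 0$ on $\partial\pi$ and $\mathcal L w^\varepsilon\ge 0$ a.e.\ in $\pi$, so by the weak maximum principle $w^\varepsilon\ge 0$ in $\pi$ with $w^\varepsilon(x^0)=0$. Since $x^0$ is an interior point of $\Omega$, the inequality $u\ge\inf u$ holds in a full neighborhood of $x^0$; combining this with the strict positive inward-normal slope of $V$ at $x^0$ yields a contradiction, exactly as in Theorem~\ref{thm:HO}, the differentiability argument there being replaced by a two-sided incremental-ratio comparison that exploits the Alexandrov-type twice differentiability of $W^2_{n,\mathrm{loc}}$ functions. \textbf{The main obstacle} is justifying that $V$ retains a strict linear lower bound $V(x^0+\lambda\mathbf n)\ge c\lambda$ for small $\lambda>0$: the explicit barrier $v_s$ has linear slope $sr^{-s-1}$ at $\partial B_r$, but the auxiliary $\phi$ is only in $W^2_{n,\mathrm{loc}}$ and could a priori decay slower than linearly there. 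This is resolved by a scaling argument that exploits the smallness of $\|{\bf b}\|_{n,B_r}$ and hence of $\|\phi\|_\infty$, making $\phi$ negligible compared to the linear slope of $v_s$ in a neighborhood of $x^0$ and thereby reducing the conclusion to the classical Hopf setting.
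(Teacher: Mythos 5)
There is a genuine gap, and you have actually flagged it yourself without resolving it. Your argument requires the corrected barrier $V=v_s-\phi$ to keep a strict linear lower bound at the touching point $x^0\in\partial B_r$; you claim this follows because $\|\phi\|_\infty$ can be made small, but that is not enough. A nonnegative $W^2_{n}$ function vanishing on $\partial\pi$ can easily decay sublinearly at $\partial B_r$ (think of something behaving like $\|\phi\|_\infty\sqrt{\mathrm{dist}(x,\partial B_r)/r}$), in which case $V<0$ in a neighborhood of $x^0$ no matter how small $\|\phi\|_\infty$ is, and the slope comparison collapses. Smallness in sup-norm gives no control over the boundary decay rate without a boundary gradient estimate for $\phi$ — and establishing that in the $L_n$-drift setting is exactly the ``dual'' problem discussed in \S\,\ref{ss:2.4}, which is much harder than the theorem you are trying to prove. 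There is a secondary gap: you take for granted the solvability of $\mathcal{L}\phi=(\mathcal{L}v_s)_+$ in $W^2_n$ with $b^i\in L_n$ and zero boundary data; the Aleksandrov--Bakelman estimate gives uniqueness and an a priori bound, not existence. Finally, the final step (``two-sided incremental-ratio comparison'') needs $Du(x^0)=0$, but $W^2_n\hookrightarrow\mathcal{C}^1$ fails, so this is also not immediate.

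The paper's proof avoids all of this with one change of geometry. Instead of keeping $x^0$ on the boundary of the annulus, it takes the touching ball to have radius $r/2$ and sets $\pi=B_r\setminus\overline{B}_{r/4}$, so that $x^0$ lies \emph{strictly inside} $\pi$, at $|x^0|=r/2$. It then applies the Aleksandrov--Bakelman estimate (\ref{eq:1.6}) directly to $w^\varepsilon=\varepsilon v_s-u+u(x^0)$ — no corrected barrier, no auxiliary Dirichlet problem — obtaining $u(x)-u(x^0)\geq\varepsilon\big(|x|^{-s}-r^{-s}-C\|{\bf b}\|_{n,\pi}\,r^{-s}\big)$ for all $x\in\pi$. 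Evaluating at $x=x^0$ (where $v_s(x^0)=r^{-s}(2^s-1)$ and the left side is zero) yields $0\geq\varepsilon r^{-s}(2^s-1-C\delta)$, a contradiction once $r$ is small enough that $\|{\bf b}\|_{n,\pi}\leq\delta$. This pointwise evaluation at an interior point makes the derivative and boundary-behavior issues disappear entirely.
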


\begin{proof}
Suppose that $u\not\equiv const$, but the set (\ref{eq:min-set}) is not empty. As in the proof of Theorem \ref{thm:HO}, one can see that there exists a ball in the set $\Omega\setminus M$ whose boundary contains a point $x^0\in M$.
We denote the radius of the ball by $\frac r2$ and assume, without loss of generality, that $B_r\subset\Omega$. Denote $\pi= B_r\setminus \overline{B}_{\frac r4}$ and consider the barrier function (\ref{eq:barrier}) in $\pi$. 

We have
$$
{\cal L}v_s(x) \leq s|x|^{-s-2}\cdot\big(-(s+2)\nu +n\nu^{-1}+r|{\bf b}(x)|\big).
$$ 
In contrast to Theorem~\ref{thm:HO}, here we cannot achieve the inequality
${\cal L}v_s\leq0$. However, choosing $s=n\nu^{-2}$, we obtain
$$
{\cal L}v_s(x) \leq sr|x|^{-s-2}|{\bf b}(x)|\leq 4^{s+2}sr^{-s-1}|{\bf b}(x)|\quad\mbox{in}\quad\pi.
$$ 
By construction, the inequality $u(x)-u(x^0)>0$ holds on $\partial B_{\frac r4}$. Therefore, for sufficiently small $\varepsilon>0$, the function $w^\varepsilon(x)=\varepsilon v_s(x)-u(x)+u(x_0)$ is non-positive on both parts of the boundary of $\pi$. 

Application of the estimate (\ref{eq:1.6}) to $w^\varepsilon$ in $\pi$ gives
$$
w^\varepsilon(x)\leq C\left(n,\nu,\|{\bf b}\|_{n,\pi}\right)\cdot r\cdot \varepsilon \|({\cal L}v_s(x))_+\|_{n,\pi}\leq C\left(n,\nu,s,\|{\bf b}\|_{n,\pi}\right)\cdot \varepsilon r^{-s}\,\|{\bf b}\|_{n,\pi},
$$
and therefore,
\begin{equation}
\label{eq:1.10}
u(x)-u(x^0)\geq \varepsilon \Big(|x|^{-s}-r^{-s}-C\left(n,\nu,s,\|{\bf b}\|_{n,\pi}\right)\|{\bf b}\|_{n,\pi}r^{-s} 
\Big).
\end{equation}
By the Lebesgue theorem, for any $\delta>0$ one can choose $r$ so small that $\|{\bf b}\|_{n,\pi}\leq\delta$. Then the inequality (\ref{eq:1.10}) taken at the point $x^0$ becomes
$$
0\geq \varepsilon r^{-s} \Big(2^s-1-C\left(n,\nu,s,\delta\right)\delta\Big).
$$
The latter is impossible if $\delta$ is small enough.
\end{proof}

As a corollary, the following statement was proved in \cite{Al1961E}\footnote{This result is also given here in a simplified version.}. 

\begin{cor}\label{cor:Al}
Let the operator ${\cal L}$ and the function $u$ satisfy the assumptions of Theorem~\ref{thm:SMP-Al}. Let the domain $\Omega$ satisfy the interior ball condition in a neighborhood $U$ of a point $x^0\in\partial\Omega$. Suppose that \begin{equation}
\label{eq:1.11}
u\big|_{\partial\Omega\cap U}\equiv \inf\limits_{\Omega}u; \quad Du\big|_{\partial\Omega\cap U}\equiv0.
\end{equation}
Then $u\equiv const$ in $\Omega$.
\end{cor}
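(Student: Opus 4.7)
The plan is to reduce the boundary-data assertion to the interior strong maximum principle (Theorem~\ref{thm:SMP-Al}) by extending $u$ trivially across the boundary portion $\partial\Omega\cap U$ and invoking Theorem~\ref{thm:SMP-Al} on an enlarged domain. Subtracting a constant, I may assume without loss of generality that $\inf_\Omega u=0$, so that $u\ge 0$ in $\Omega$ and $u\equiv 0$, $Du\equiv 0$ on $\partial\Omega\cap U$; this reduction is legitimate since the operator ${\cal L}$ in (\ref{eq:nondiv_operator}) contains no zeroth-order term.

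Using the interior ball condition (which in particular ensures that $\Omega$ is locally on one side of $\partial\Omega$ near $x^0$), I would then pick a bounded open neighborhood $V$ of $x^0$ such that $\overline V\cap\partial\Omega\subset U$ and $V\setminus\overline\Omega\ne\emptyset$. Set $\tilde\Omega=\Omega\cup V$ and define
$$
\tilde u(x)=\begin{cases}u(x),& x\in\Omega,\\ 0,& x\in V\setminus\Omega,\end{cases}
$$
extending the coefficients by $\tilde a^{ij}=\delta^{ij}$, $\tilde b^i=0$ on $V\setminus\Omega$. The resulting operator $\tilde{\cal L}$ is uniformly elliptic with constant $\nu$ (using $\nu\le 1$), and $\tilde b^i\in L_{n,{\rm loc}}(\tilde\Omega)$.

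The crucial technical step is to verify that $\tilde u\in W^2_{n,{\rm loc}}(\tilde\Omega)$. Since the Cauchy data $u=0$, $Du=0$ on $\partial\Omega\cap U$ match the trivial extension $\tilde u\equiv 0$ outside $\Omega$, the function $\tilde u$ is of class $C^1$ across $\partial\Omega\cap V$; hence no distributional singularities appear in the second derivatives at the interface, and piecing the $W^2_n$-regularity on the two sides together yields the required Sobolev regularity. With this established, $\tilde{\cal L}\tilde u\ge 0$ almost everywhere on $\tilde\Omega$ (equal to ${\cal L}u\ge 0$ inside $\Omega$ and to $0$ outside). Since $\tilde u\ge 0$ on $\tilde\Omega$ and vanishes on the nonempty open set $V\setminus\overline\Omega$ of interior points of $\tilde\Omega$, Theorem~\ref{thm:SMP-Al} applied to $\tilde u$ forces $\tilde u\equiv 0$ on $\tilde\Omega$; in particular $u\equiv 0$ on $\Omega$, as desired. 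The step I expect to be the main obstacle is the rigorous verification of $\tilde u\in W^2_{n,{\rm loc}}(\tilde\Omega)$: this is where the interior ball condition enters (guaranteeing that $\partial\Omega\cap U$ is locally one-sided and regular enough for the $C^1$-gluing to land in $W^2_n$) and where the hypothesis $Du\big|_{\partial\Omega\cap U}\equiv 0$ must be read as a genuine $C^1$-trace condition up to the boundary.
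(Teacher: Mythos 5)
Your proposal follows exactly the paper's (one-sentence) proof: extend $u$ by the constant $\inf_\Omega u$ across $\partial\Omega$ near $x^0$ and invoke the interior strong maximum principle, Theorem~\ref{thm:SMP-Al}, on the enlarged domain. One technical caveat: the justification via ``$C^1$-gluing'' is not quite right, since $W^2_n$ does not embed into $C^1$ (the Sobolev embedding is borderline and $Du$ need only lie in $BMO$); the correct route to $\tilde u\in W^2_{n,\mathrm{loc}}(\tilde\Omega)$ is to verify directly that the zero extension has weak second derivatives in $L_n$, by integrating by parts against test functions and observing that the boundary terms vanish because the (Sobolev) traces of $u$ and $Du$ on $\partial\Omega\cap U$ are zero.
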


\begin{proof}
Extending the function $u$ by a constant outside $\Omega$ in a neighborhood of the point $x^0$, we fall into the conditions of Theorem~\ref{thm:SMP-Al}. 
\end{proof}

It is easy to see that Corollary~\ref{cor:Al} is much weaker than the normal derivative lemma, since the condition (\ref{eq:1.11}) must be satisfied on $\partial\Omega\cap U$, i.e. on a piece of the boundary. However, in contrast to the case of bounded lower-order coefficients, where the proofs of the strong maximum principle and of the normal derivative lemma are almost the same, the normal derivative lemma fails  under the conditions of Theorem~\ref{thm:SMP-Al}! We provide the corresponding counterexample (see \cite{NU2009}, \cite{Sa2010}, \cite{Na2012}).
\medskip

Let $u(x)=x_n\cdot \ln^\alpha (|x|^{-1})$ in the half-ball $B_r^+=B_r\cap\{x_n>0\}$. Then it is easily seen that $u\in W^2_n(B_r^+)$ for $r\leq \frac 12$ and $\alpha<\frac {n-1}n$. Further, direct calculation shows that $u$ satisfies the equation
$$
-\Delta u+b^n(x)D_nu=0\qquad\mbox{with} \quad 
|b^n|\le \frac{C(\alpha)}{|x|\ln(|x|^{-1})}\in L_n(B_r^+).
$$
Finally,  $u>0$ in $B_r^+$, and $u$ attains its minimum at the boundary point $0$. However, for $\alpha<0$ we evidently have $D_nu(0)=0$.

\begin{rem}
Notice that the weakened form of the normal derivative lemma (see \cite{Nd1983E}) holds true in this example. We conjecture that such a statement is fulfilled for a general uniformly elliptic operator ${\cal L}$ with $b^i\in L_n(\Omega)$, but as far as we know, this question remains open. 
\end{rem}

\begin{rem}
The above example also shows that the condition $b^i\in L_n(\Omega)$ is insufficient for the gradient estimates on $\partial\Omega$ of the solution to the Dirichlet problem. Indeed, for $\alpha>0$ we have $D_nu(0)=+ \infty$. 
\end{rem}

The article by O.A.~Ladyzhenskaya and N.N.~Uraltseva \cite{LU1988E} is of primary importance (a brief report was published three years earlier in \cite{LU1985E}).
There, for the first time, an iterative method for estimating the solution in a neighborhood of the boundary was applied. In the simplest case, it is as follows. 

Assume that a function $u$ is defined in the cylinder $Q_{1,1}$, satisfies the equation ${\cal L}u=f$ and the boundary condition $u|_{x_n=0}=0$. Let us introduce a sequence of cylinders $Q_{r_k,h_k}$, where $r_k=2^{-k}$ and $h_k$ is a suitably chosen sequence such that $h_k=o(r_k)$ as $k\to\infty$. Denote 
$$
M_k=\sup\limits_{Q_{r_k,h_k}}\frac{u(x)}{h_k}
$$
and apply the Aleksandrov--Bakelman estimate to the difference
$$
u(x)-M_kh_k \cdot \mathfrak{v}\big(\tfrac {x'}{r_k}, \tfrac {x_n}{h_k}\big),
$$
where $\mathfrak{v}$ is a certain special barrier function. The resulting estimate, taken at the points $x\in Q_{r_{k+1},h_{k+1}}$, gives a recurrence relation between $M_{k+1}$ and $M_k$. Iterating it, we obtain $\limsup\limits_k M_k<\infty$, which gives an upper bound for $D_nu(0)$ in terms of $\sup\limits_{Q_{1,1}}u$ and some integral norm of the right-hand side. 

In \cite{LU1988E}, this scheme was applied to the equation ${\cal L}u=f$ with a uniformly elliptic operator ${\cal L}$ under the following assumptions:
\begin{equation}\label{eq:1.12}
u\in W^2_n(\Omega),\quad b^i\in L_q(\Omega),\quad  f_+\in L_q(\Omega),\qquad q>n,
\end{equation}
in a domain in one of the following two classes:

1) convex domains;

2) $W^2_q$-smooth domains\footnote{This means that any point $x^0\in\partial\Omega$ has a neighborhood $U$ such that there is a $W^2_q$-diffeomorphism mapping the set $U\cap\Omega$ onto $Q_{1,1}$, and the norms of direct and inverse diffeomorphisms are estimated uniformly with respect to $x^0$. This assumption ensures that the conditions (\ref{eq:1.12}) are invariant under local flattening of the boundary.}.
\medskip

As we already mentioned in \S\,\ref{ss:2.3}, the paper \cite{AN1995aE} established an Alek\-san\-drov--Bakelman type estimate in $\Omega\subset Q_{R,R}$ for operators of the form (\ref{eq:nondiv_operator}) with ``composite'' lower-order coefficients $b^i=b^i_{(1)}+b^i_{(2)}$ provided
\begin{equation}\label{eq:1.13}
b^i_{(1)}\in L_n(\Omega),\quad \big|b^i_{(2)}(x)\big|\leq Cx_n^{\gamma-1},\quad \gamma\in(0,1).
\end{equation}
Based on this result, a bound for $\mbox{ess}\sup\partial_{\bf n}u$ on $\partial\Omega$ in a $W^2_q$-smooth domain, $q>n$, was established in \cite{AN1995aE} subject to the conditions
$$
\aligned
&b^i=b^i_{(1)}+b^i_{(2)}; && b^i_{(1)}\in L_q(\Omega), & \big|b^i_{(2)}(x)\big|\leq Cx_n^{\gamma-1},\\
&{\cal L}u= f^{(1)}+f^{(2)}; && f^{(1)}_+\in L_q(\Omega), & f^{(2)}_+(x)\leq Cx_n^{\gamma-1},
\endaligned
\qquad\gamma\in(0,1).
$$

M.V.~Safonov \cite{Sa2008} (see also \cite{Sa2018}) developed a new approach based on the boundary Harnack inequality (see \S\,\ref{ss:4.3}). By this ap\-proach, he established in a unified way
\begin{enumerate}
\item the normal derivative lemma under the condition
${\cal L}_0u\geq0$, in a domain satisfying the interior ${\cal C}^{1,{\cal D}}$-paraboloid condition\footnotemark[34];
\item an upper bound for $\partial_{\bf n}u(0)$ under the conditions ${\cal L}_0u\leq0$, $u|_{\partial\Omega\cap B_r}=0$, in a domain  satisfying the exterior ${\cal C}^{1,{\cal D}}$-paraboloid condition\footnote{In \cite{Sa2008}, the function $\phi$ defining an interior or exterior paraboloid satisfies the assumption $\int_0^\varepsilon \tau^{-2}\phi(\tau)\,d\tau<\infty$. This assumption is formally more general than the standard ${\cal C}^{1,{\cal D}}$-condition, but Lemma 2.4 in \cite{Na2012} shows that the obtained requirement on the domain is in essence equivalent to the usual one.}.
\end{enumerate}

In \cite{Sa2010}, the (slightly improved) iterative method by Ladyzhenskaya--Uraltseva was applied\footnote{It was noted in \cite{Sa2010} that the normal derivative lemma under assumption $b^i\in L_q(\Omega)$, $q>n$, was in fact obtained already in \cite[Lemma 4.4]{LU1988E}. This fact remained unnoticed for more than 20 years!} to derive the normal derivative lemma in the domain $\Omega=Q_{R,R}$ under the conditions
$$
u\in W^2_{n,{\rm loc}}(\Omega)\cap {\cal C}(\overline{\Omega}),\quad \min\limits_{\overline\Omega} u=u(0);\qquad b^i\in L_n(\Omega),\quad b^n\in L_q(\Omega),\quad q>n.
$$
Thus, it turns out that, in comparison with $b^i\in L_n(\Omega)$, it suffices to strength\-en the assumption only on the normal component of the vector ${\bf b}$.
\medskip

In \cite{Na2012}, both the normal derivative lemma and the gradient estimate of the solution to the Dirichlet problem on the boundary of the domain are obtained under the currently sharpest conditions. Moreover, the duality of these statements is explicitly demonstrated. The result is achieved by a combination of the Ladyzhenskaya--Uraltseva--Safonov technique and the Aleksandrov--Bakelman type estimate \cite{Li2000}, where the assumption on $b^i_{(2)}$ from (\ref{eq:1.13}) is refined to $\big |b^i_{(2)}(x)\big|\leq\frac {\sigma(x_n)}{x_n}$, $\sigma\in{\cal D}$.
\smallskip

We give the formulation of this result.

\begin{thm}\label{thm:Na12} 
Let ${\cal L}$ be a uniformly elliptic operator of the form (\ref{eq:nondiv_operator}) in $\Omega=Q_{R,R}$. Let $b^i=b^i_{(1)}+b^i_{(2)}$, and let the following conditions be satisfied:
$$
b^i_{(1)}\in L_n(\Omega), \quad \|b^n_{(1)}\|_{n,Q_{r,r}}\leq \sigma(r)\ \ \mbox{for} \ \ r\leq R;\quad \big|b^i_{(2)}(x)\big|\leq\frac {\sigma(x_n)}{x_n};\quad \sigma\in{\cal D}.
$$
Suppose that $u\in W^2_{n,{\rm loc}}(\Omega)\cap {\cal C}(\overline{\Omega})$. 
\begin{enumerate}
\item If $u>0$ in $Q_{R,R}$, $u(0)=0$, and ${\cal L}u\geq0$, then
$$
\inf\limits_{0<x_n<R}\frac{u(0,x_n)}{x_n}>0.
$$
\item If $u|_{x_n=0}\le0$, $u(0)=0$, and ${\cal L}u=f^{(1)}+f^{(2)}$, where
$$
\|f^{(1)}_+\|_{n,Q_{r,r}}\leq \sigma(r)\quad  \mbox{for} \quad r\leq R;\qquad f^{(2)}_+(x)\leq\frac {\sigma(x_n)}{x_n},
$$
then
$$
\sup\limits_{0<x_n<R}\frac{u(0,x_n)}{x_n}\leq C,
$$
where $C<\infty$ is determined by known quantities.
\end{enumerate}
\end{thm}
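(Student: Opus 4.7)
The plan is to derive both parts of Theorem~\ref{thm:Na12} by the dyadic iteration scheme of Ladyzhenskaya--Uraltseva, in the form refined by Safonov, with the base tool being the Aleksandrov--Bakelman type estimate from \cite{Li2000} that accommodates the ``composite'' coefficients $b^i_{(2)}$ and right-hand sides $f^{(2)}_+$ bounded by $\sigma(x_n)/x_n$. The two parts are dual in nature: one controls $u$ from above by a barrier, the other from below. I describe the argument for part~2 in detail and indicate the modifications for part~1 at the end.

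Introduce a dyadic scale $r_k = 2^{-k} R$ and heights $h_k = r_k\,\hat\sigma(r_k)$ (with $\hat\sigma$ as in Remark~\ref{rem:B}), set $Q_k = Q_{r_k, h_k}$, and define $M_k = \sup_{Q_k} u/h_k$. On each $Q_k$ I would build a barrier $\mathfrak{v}_k$ which vanishes on $\{x_n=0\}$ and on the lateral boundary, equals a fixed multiple of $x_n/h_k$ on $\overline{Q_{k+1}}$, and satisfies a controlled bound on ${\cal L}\mathfrak{v}_k$. The construction combines a Pucci-type profile in $x_n$, namely $\int_0^{x_n}\!\int_0^\tau \sigma(t)/t\,dt\,d\tau + \kappa x_n$, chosen so as to absorb the singular part of $b^n_{(2)}$, with a smooth cutoff in $x'$ supported in $B^{n-1}_{r_k}$.

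Applying the Aleksandrov--Bakelman estimate of \cite{Li2000} to the function $u - M_k h_k\,\mathfrak{v}_k$ in $Q_k$, where it is non-positive on $\partial Q_k$ by construction, yields a recurrence
\begin{equation*}
M_{k+1} \leq (1 - \delta + \varepsilon_k)\,M_k + F_k,
\end{equation*}
with $\delta \in (0,1)$ a fixed gap produced by the barrier on $\overline{Q_{k+1}}$, $\varepsilon_k$ collecting contributions from $\|b^n_{(1)}\|_{n, Q_k} \leq \sigma(r_k)$ and from $\hat\sigma(h_k)$, and $F_k$ collecting the contributions of $f^{(1)}_+$ and $f^{(2)}_+$. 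The choice of $h_k$ makes both $\sum \varepsilon_k$ and $\sum F_k$ finite, thanks to $\sigma \in {\cal D}$, and the recurrence then forces $\limsup_k M_k < \infty$, which is the conclusion of part~2. For part~1 one iterates $m_k = u(0,h_k)/h_k$ instead, with a barrier from below, using that $u > 0$ on $\partial B^{n-1}_{R/2} \times \{R/2\}$ to seed the induction and deducing that $m_k \geq c > 0$ uniformly.

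The main technical obstacle is constructing a single barrier $\mathfrak{v}_k$ for which all three properties hold simultaneously with constants uniform in $k$: the $x'$-cutoff unavoidably produces errors of order $r_k^{-2}$ in the second derivatives, which must be dominated via the geometric ratio $h_k/r_k^2 = \hat\sigma(r_k)/r_k \to 0$, and the Pucci-type profile in $x_n$ must be matched to the Dini modulus precisely enough that $\varepsilon_k$ is summable. A second delicate point is the explicit dependence of the ABP constant in \cite{Li2000} on $\|b^n_{(1)}\|_{n, Q_k}$: it is exactly the hypothesis that the local $L_n$ norm of the \emph{normal} component $b^n_{(1)}$ on $Q_{r,r}$ is controlled by $\sigma(r)$ (while the tangential components are only required to lie in $L_n(\Omega)$) that keeps the recurrence stable and produces the sharp result.
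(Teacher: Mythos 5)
Your proposal follows exactly the approach the paper attributes to \cite{Na2012} and sketches in Subsection~2.4: the Ladyzhenskaya--Uraltseva--Safonov dyadic iteration in cylinders $Q_{r_k,h_k}$ with $r_k=2^{-k}R$, combined with the Aleksandrov--Bakelman type estimate of \cite{Li2000}, the latter chosen precisely because it accommodates the composite coefficients with $|b^i_{(2)}|\leq\sigma(x_n)/x_n$. The barrier modelled on the Pucci profile $\int_0^{x_n}\!\int_0^\tau\sigma(t)t^{-1}\,dt\,d\tau+\kappa x_n$ with a cutoff in $x'$, the recurrence for $M_k=\sup_{Q_k}u/h_k$, and the observation that summability of $\varepsilon_k,F_k$ rests on $\sum_k\hat\sigma(r_k)\sim\int_0^R\sigma(\tau)\tau^{-1}\,d\tau<\infty$ — all of this matches what is described. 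You also correctly single out the role of the sharpened assumption $\|b^n_{(1)}\|_{n,Q_{r,r}}\leq\sigma(r)$ on the \emph{normal} component alone as the mechanism that keeps the iteration stable.

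One step is mathematically wrong as written and, since it is the step that closes the recurrence, it must be fixed. You claim the $r_k^{-2}$ errors from the $x'$-cutoff are dominated because $h_k/r_k^2=\hat\sigma(r_k)/r_k\to0$. This contradicts the definition of the class ${\cal D}$ (and Remark~\ref{rem:B}): by construction $\hat\sigma(r)/r$ is \emph{decreasing} on $(0;1]$, hence non-decreasing as $r\to0^+$, and it does not tend to $0$ unless $\sigma\equiv0$. The quantity that actually tends to zero is the aspect ratio $h_k/r_k=\hat\sigma(r_k)$; correspondingly, the relative size of the tangential second-derivative error of $M_kh_k\mathfrak{v}_k$ against the dominant normal term $M_k/h_k$ is $(h_k/r_k)^2=\hat\sigma^2(r_k)\to0$, not $h_k/r_k^2$. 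A further loose end: the cross terms $a^{in}D_iD_n\mathfrak{v}_k$ are of relative size $h_k/r_k=\hat\sigma(r_k)$, larger than $(h_k/r_k)^2$, so a careful barrier construction must absorb them as well — $\hat\sigma(r_k)\to0$ still suffices, but your bookkeeping omits these terms. With the exponent corrected and the cross terms accounted for, the argument should proceed along the lines you describe.
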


It is important to note that the occurrence of term $b^i_{(2)}$ allows us to perform a coordinate transformation using the regularized distance in a neighborhood of an insufficiently smooth boundary. Thus, Theorem~\ref{thm:Na12} implies corresponding assertions in domains satisfying the interior/exterior ${\cal C}^{1,{\cal D}}$-paraboloid condition\footnote{Cf. \cite{HLW2020}, where the existence of $D_nu(0)$ is proved for viscosity solutions of the equation ${\cal L}_0u=f$.}.
\medskip

In \cite{AN2016}, a new counterexample was constructed. It shows the sharpness of the interior ${\cal C}^{1,{\cal D}}$-paraboloid condition for the normal derivative lemma. We present its formulation in the simplest case.

\begin{thm}\label{thm:Hopf-counter}
Let $\Omega$ be locally convex in a neighborhood of the origin, that is,
$$
\Omega\cap B_R=\big\{x\in\R^n\,\big|\, F(x')<x_n<\sqrt{R^2-|x'|^2}\big\}
$$
for some $R>0$. Here $F$ is a convex function, $F \geq 0$, and $F(0)=0$.

Further, suppose that $u \in W^2_{n,{\rm loc}}( \Omega) \cap {\cal C}( \overline{\Omega})$ is a solution of the equation ${\cal L}_0u=0$ with a uniformly elliptic operator ${\cal L}_0$, and $u|_{\partial\Omega\cap B_R}=0$. 

If the function
$$
\delta (r)=\sup\limits_{|x'|\leq r}\frac{F(x')}{|x'|}
$$ 
does not satisfy the Dini condition at zero then $\lim\limits_{\varepsilon\to+0}\,\frac  {u(\varepsilon x_n)}\varepsilon=0$.
\end{thm}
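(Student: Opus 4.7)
The theorem is a counterexample to the Hopf--Oleinik lemma, and my plan is to prove it by a barrier/comparison argument. Let $\rho\in(0,R]$ be small and set $M:=\sup_{\overline\Omega\cap B_R}|u|$. The goal is to construct a non-negative function $w$ in $\Omega\cap B_\rho$ satisfying ${\cal L}_0 w\ge 0$, $w\ge 0$ on $\partial\Omega\cap B_\rho$, $w\ge M$ on $\Omega\cap\partial B_\rho$, and $w(0',x_n)/x_n\to 0$ as $x_n\to 0^+$. Applying the weak maximum principle to $w\pm u$ in $\Omega\cap B_\rho$ would then yield $|u(0',\varepsilon)|\le w(0',\varepsilon)=o(\varepsilon)$, which is the claim.

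The non-Dini hypothesis on $\delta$ enters through the divergent quantity
\begin{equation*}
\Phi(s)=\int_s^\rho\frac{\delta(t)}{t}\,dt,\qquad \Phi(s)\to+\infty\text{ as }s\to 0^+.
\end{equation*}
Guided by the classical Warschawski--Verzhbinskii--Maz'ya analysis of harmonic functions in axisymmetric convex cusps (see \cite{VM1967E},\cite{W1967}), in the model axisymmetric case $\{x_n>|x'|\delta(|x'|)\}$ the axial profile of the harmonic function with such boundary data decays faster than $x_n$---roughly at the rate $x_n/\Phi(x_n)$. I would therefore take $w$ of the form
\begin{equation*}
w(x)=A\,\frac{x_n}{\Phi(x_n)}+B\,\eta(x),
\end{equation*}
with $\eta(x)$ an axisymmetric correction depending on $|x'|$ (in the spirit of the Pucci barrier of \S\ref{ss:2.2}). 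The constants $A,B>0$ would be tuned so that $w\ge 0$ on $\partial\Omega\cap B_\rho$ (immediate), $w\ge M$ on $\Omega\cap\partial B_\rho$, and $w(0',x_n)/x_n=A/\Phi(x_n)+B\eta(0',x_n)/x_n\to 0$.

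The main technical obstacle is verifying ${\cal L}_0 w\ge 0$ in $\Omega\cap B_\rho$. The axial profile $x_n/\Phi(x_n)$ is sublinear in $x_n$ with positive second $x_n$-derivative, so its contribution to ${\cal L}_0 w=-a^{ij}D_iD_j w$ is strictly negative; the correction $\eta$ must therefore be genuinely multivariable, strictly superharmonic in the radial $|x'|$-direction, while remaining small on the axis $\{x'=0\}$ so as not to spoil the asymptotic. The divergence of $\Phi$ is precisely what provides enough ``room'' for this balance---had $\delta$ been Dini, $\Phi$ would be bounded, $x_n/\Phi(x_n)\sim c\,x_n$, and the construction would collapse to the standard linear barrier. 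A secondary difficulty is that $\partial\Omega$ is only Lipschitz (being described by a general convex $F$ rather than by the axisymmetric envelope $|x'|\delta(|x'|)$), so the second-derivative computations near the boundary require the regularized-distance machinery of Lieberman \cite{Li1985}, as in \S\ref{ss:2.2}. Once ${\cal L}_0 w\ge 0$ is established, Theorem~\ref{thm:pmAB-1} applied to $w\pm u$ immediately concludes the proof.
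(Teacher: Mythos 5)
Your strategy mirrors the classical $\inf$-version of this counterexample (Widman \cite{W1967}, Verzhbinskii--Maz'ya \cite{VM1967E}), not the theorem actually stated, and the survey flags this distinction right after Theorem~\ref{thm:Hopf-counter}: the earlier counterexamples require non-Dininess of $\inf_{|x'|\le r} F(x')/|x'|$, i.e.\ the cusp must fail in \emph{every} direction, whereas here only $\sup_{|x'|\le r} F(x')/|x'|$ is assumed non-Dini and the boundary may be as flat as a plane in all but one direction. Your ``model axisymmetric case'' $\{x_n>|x'|\delta(|x'|)\}$ is a \emph{subset} of $\Omega$ (since $F(x')\le |x'|\delta(|x'|)$), not a superset; a barrier read off from that model must nevertheless satisfy ${\cal L}_0 w\ge 0$ throughout $\Omega\cap B_\rho$, including points where $x_n$ is tiny while $|x'|$ is of order $\rho$ (for instance along a direction in which $F$ vanishes). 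There the axial term is fatal: with $g(t)=t/\Phi(t)$ one has
\begin{equation*}
g''(t)=\frac{\delta'(t)}{\Phi^2(t)}+\frac{\delta(t)}{t\,\Phi^2(t)}+\frac{2\delta^2(t)}{t\,\Phi^3(t)}\ \longrightarrow\ +\infty\quad\text{as }t\to 0^+,
\end{equation*}
so $-a^{nn}g''(x_n)\to-\infty$, while an axisymmetric correction $\eta(|x'|)$ contributes only $O(1)$ superharmonicity near the axis (if $\eta'(0)=0$) or spoils it outright through the $(n-2)\eta'(|x'|)/|x'|$ term (if $\eta'(0)>0$) -- and $\eta$ must also satisfy $\eta(0)/x_n\to0$, i.e.\ $\eta(0)=0$. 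The Lieberman regularized distance cannot fix this: it is a device for smoothing a Dini boundary, not for replacing a $\sup$-envelope by an $\inf$-envelope.

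Separately from the $\sup/\inf$ issue, the barrier itself is an unresolved ansatz. You have correctly identified the tension (wrong sign of ${\cal L}_0$ on the axial term, need for a compensator that stays $o(x_n)$ on the axis), but the additive separated form $A\,x_n/\Phi(x_n)+B\,\eta(|x'|)$ does not resolve it; the Widman/Himchenko-type barriers genuinely couple $|x'|$ and $x_n$ so that the strong superharmonicity lives precisely where $|x'|\sim x_n\,(\text{or larger, but controlled})$. More importantly, your proof never uses convexity of $F$, which is the hypothesis that makes the $\sup$-condition sufficient: the actual argument picks a direction $e$ where $F(te)/t$ nearly achieves $\delta(t)$, builds a (planar) barrier in the variables $(x\cdot e,\,x_n)$, and uses convexity and $F\ge0$ to ensure the cylindrical extension of that barrier remains nonnegative on the whole graph $\{x_n=F(x')\}$ rather than just on the slice. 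Without that ingredient the comparison step $w\pm u\ge 0$ on $\partial(\Omega\cap B_\rho)$ cannot be closed, and the proposal does not prove the stated theorem.
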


Note that if $\delta(r)$ satisfies the Dini condition at zero then $\Omega$ satisfies the interior ${\cal C}^{1,{\cal D}}$-paraboloid condition at the origin. Thus, for {\bf\textit{locally convex}} domains, the Dini condition at zero for the function $\delta(r)$ is necessary and sufficient for the validity of the normal derivative lemma.

We emphasize that all previous counterexamples of this type (\cite{W1967}, \cite{VM1967E}, \cite{Hi1970E} and \cite{Sa2008}) require the absence of the Dini condition for the function $\inf\limits_{|x'|\leq r}\frac{F(x')}{|x'|}$. Roughly speaking, in those counterexamples the Dini condition must fail in all directions, whereas in Theorem \ref{thm:Hopf-counter} it is enough to violate it in one direction.
\medskip

For general domains, a more subtle counterexample was constructed in \cite{Sa2018}. However, it is too complicated to describe it here.

\subsection{The Harnack inequality}

As already mentioned in the Introduction, the Harnack inequality, which can be considered as a quantitative version of the strong maximum principle, was first proved by C.G.A. Harnack \cite{H1887} for harmonic functions on the plane. Since Harnack's proof is based on the Poisson formula, it is obviously valid in any dimension. Harnack's formulation is included into most textbooks:
\medskip

\textit{If $u\geq0$ is a harmonic function in $B_R\subset\R^n$ then}
\begin{equation}\label{eq:Harnack1}
u(0)\,\frac {(R-|x|)R^{n-2}}{(R+|x|)^{n-1}}\leq u(x)\leq u(0)\,\frac {(R+|x|)R^{n-2}}{(R-|x|)^{n-1}}.
\end{equation}
For $\Omega=B_R$ and $\Omega'=B_{\theta R}$, $\theta<1$, this immediately implies (\ref{eq:Harnack}) with $C=\big(\frac {1+\theta}{1-\theta}\big)^n$.
\bigskip

In this Section we assume that the uniform ellipticity condition (\ref{eq:uniell}) is satisfied.
\medskip

L.~Lichtenstein in \cite{L1912} proved the inequality (\ref{eq:Harnack}) for operators of general form ${\cal L}+c(x)$, $c\geq0$, with ${\cal C}^2$-smooth coefficients (as in  \cite{H1887}, in the two-dimensional case).

J. Serrin \cite{S1956} established the Harnack inequality in the two-dimen\-sion\-al case for operators ${\cal L}+c(x)$, $c\geq0$, with {\bf\textit{bounded}} coefficients. This result was obtained simultaneously and independently in \cite{BNi1955}. For the case $n\geq3$, Serrin also proved (\ref{eq:Harnack}) under the condition\footnote{More precisely, the principal coefficients of the operator must satisfy the Dini condition in some neighborhood of $\partial\Omega$.}
$a^{ij}\in{\cal C}^{0,{\cal D}}(\Omega)$.
\medskip

An important improvement was made by E.M.~Landis \cite{La1968aE} (see also \cite[Ch.~1]{La1971E}). Using the {\bf growth lemma} proposed by himself, he proved the Harnack inequality in arbitrary dimension for the operator ${\cal L}_0$ with bounded co\-ef\-fi\-cients under the additional assumption that the eigenvalues of the matrix ${\cal A}$ have sufficiently small dispersion\footnote{Conditions of this form were first introduced in \cite{Cor1956E}, which is why Landis calls (\ref{eq:Cordes}) the Cordes type condition.}. Namely, the following relations are assumed to hold (after multiplying the matrix ${\cal A}$ by a suitable positive function):
\begin{equation}
\label{eq:Cordes}
{\bf Tr}({\cal A})\equiv1, \qquad \nu>\frac 1{n+2}
\end{equation}
(obviously, the inequality $\nu\leq \frac 1n$ always holds, and equality is possible only for the Laplace operator).

Notice that all the above results were obtained for classical solutions $u\in{\cal C}^2(\Omega)$.
\medskip

Finally, the key step belongs to N.V.~Krylov and M.V.~Safonov \cite{KrSa1980E}, \cite{Sa1980E} (see also \cite{KrSa1979E}). Combining the Landis method with Alek\-san\-drov--Bakelman estimates (in the elliptic case) and Krylov estimates \cite{Kr1974E}--\cite{Kr1976E} (in the parabolic case), they managed to obtain the inequality (\ref {eq:Harnack}) for {\bf\textit{strong}} solutions of elliptic \cite{Sa1980E} and parabolic \cite{KrSa1980E} equations with general operators ${\cal L}+c(x)$, $c \geq0$ (with bounded coefficients), without  assuming that the matrix ${\cal A}$ is continuous or that the dispersion of its eigenvalues is small\footnote{Note that if $c\equiv0$, then the Harnack inequality easily implies an a priori estimate for the H\"older norm of a solution. Extending this estimate (also obtained in \cite{KrSa1980E}, \cite{Sa1980E}) to quasilinear equations, O.A.~Ladyzhenskaya and N.N.~Uraltseva further established the solvability of the Dirichlet problem for non-divergence quasilinear equations under natural structure conditions only (see the survey \cite{LU1986E}). Subsequently, this result was extended to other boundary value problems for quasilinear and fully non-linear equations.}.\medskip

For operators ${\cal L}$ with $b^i\in L_n(\Omega)$, the Harnack inequality was proved in \cite{Sa2010} (see also \cite{Na1990E}). The papers \cite{FrSa2001} and \cite{Sa2015E} demonstrate a unified approach to proving the Harnack inequality for divergence and non-di\-ver\-gence type operators. At the same time, \cite{FrSa2001} showed\footnote{See also \cite{ChS2017} in this connection.} that for operators of mixed (divergence-non-divergence) form
$$
-D_i(a^{ij}(x)D_j)-\tilde a^{ij}(x)D_iD_j
$$
(matrices of the principal coefficients ${\cal A}$ and $\widetilde{\cal A}$ satisfy the uniform ellipticity condition) the Harnack inequality can fail even for $n=1$.

We also mention the papers \cite{AmFT2001} and \cite{Sa2020E}, where the Harnack inequality and the H\"older continuity of solutions were considered in the ``abstract'' context of metric and quasi-metric spaces.

\section{Divergence type operators}\label{sec:div}

In this Section, we consider operators with the following structure:
\begin{equation}
{\mathfrak L} \equiv -D_i(a^{ij}(x)D_j)+b^i(x)D_i
\label{eq:div_operator}
\end{equation}  
(in the case ${\bf b}\equiv 0$, we write ${\mathfrak L}_0$ instead of ${\mathfrak L}$) and operators of more general form
\begin{equation}\label{eq:hat-L}
\widehat{\mathfrak L}\equiv -D_i(a^{ij}(x)D_j+d^i)+b^i(x)D_i+c(x).
\end{equation}

The matrix of principal coefficients ${\cal A}$ is symmetric and satisfies the ellipticity condition 
\begin{equation}
\label{eq:ell-x}
\nu(x)|\xi|^2\leq a^{ij}(x)\xi_i\xi_j\leq{\cal V}(x)|\xi|^2 \quad\mbox{for all}\ \  \xi\in\R^n
\end{equation}
or the uniform ellipticity condition (\ref{eq:uniell}) for almost all $x\in\Omega$. In (\ref{eq:ell-x}), the functions $\nu(x)$ and ${\cal V}(x)$ are positive and finite\footnote{We emphasize that, in contrast to operators of non-divergence type, the properties of the operator ${\mathfrak L}$ are not preserved when multiplied by an arbitrary positive function. Therefore, the behavior of the functions $\nu(x)$ and ${\cal V}(x)$ should be considered separately.} almost everywhere in $\Omega$.

The solution of the equation $\widehat{\mathfrak L}u=0$ is understood here as a {\bf weak solution}, i.e. a function $u\in W^1_{2,{\rm loc}}(\Omega)$ such that the {\bf integral identity}
$$
\langle \widehat{\mathfrak L}u,\eta\rangle:=
\int\limits_{\Omega}(a^{ij}D_juD_i\eta+b^iD_iu\,\eta+d^iuD_i\eta+cu\eta)\,dx=0
$$
is satisfied for arbitrary test function $\eta\in {\cal C}^\infty_0(\Omega)$. Respectively, a {\bf weak supersolution} ($\widehat{\mathfrak L}u\geq0$)  is a function $u\in W^1_{2,{\rm loc}}(\Omega)$ such that
\begin{equation}
\label{eq:supersol}
\int\limits_{\Omega}(a^{ij}D_juD_i\eta+b^iD_iu\,\eta+d^iuD_i\eta+cu\eta)\,dx \geq 0
\end{equation}
for arbitrary {\bf\textit{non-negative}} test function $\eta\in {\cal C}^\infty_0(\Omega)$. A weak subsolution ($\widehat{\mathfrak L}u\leq0$) is defined in a similar way.
\medskip

Let us prove the weak maximum principle for the operator $\widehat{\mathfrak L}$ under the simplest restrictions on the coefficients.

\begin{thm}\label{thm:WMP-div}
Let $n\geq3$. Suppose that $\widehat{\mathfrak L}$ is an operator of the form (\ref{eq:hat-L}) in a domain $\Omega\subset\mathbb R^n$, the condition (\ref{eq:uniell}) is fulfilled, 
$$
b^i, d^i\in L_n(\Omega),\quad c\in L_{\frac n2}(\Omega),
$$
and the function ${\mathfrak u}\equiv1$ is a weak supersolution of the equation $\widehat{\mathfrak L}u=0$ in $\Omega$.

Let $u\in W^1_{2,{\rm loc}}(\Omega)$, $\widehat{\mathfrak L}u\geq0$ in $\Omega$, and $u\geq0$ on $\partial\Omega$.\footnote{Similarly to the footnote \ref{foot:28}, this means that for any $\varepsilon>0$ the inequality $u+\varepsilon>0$ holds in some neighborhood of $\partial\Omega$.} Then $u\geq0$ in $\Omega$.
\end{thm}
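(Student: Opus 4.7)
The plan is as follows. Since $u \geq 0$ on $\partial\Omega$ in the generalized sense, $u_- := (-u)_+$ lies in $\mathring{W}^1_2(\Omega)$, and for each $k \geq 0$ so does the truncation $\psi_k := (u_- - k)_+ = (u+k)_-$, supported in $A_k := \{u < -k\}$. I would prove the theorem by showing $u_- \equiv 0$, via an energy estimate for $\psi_k$ followed by two successive iterations in $k$.

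First I would exploit the distributional identity
\begin{equation*}
\langle \widetilde{\mathfrak L}_0 u,\, \eta \rangle = \langle \widehat{\mathfrak L} u,\, \eta \rangle - \langle \widehat{\mathfrak L} \mathfrak u,\, u\eta \rangle,\qquad \widetilde{\mathfrak L}_0 v := -D_i(a^{ij} D_j v) + (b^i - d^i) D_i v,
\end{equation*}
verified by direct expansion of the defining integrals. Applied with $\eta = \psi_k$ (after a routine $L^\infty$-truncation ensuring $u\eta \in \mathring{W}^1_2(\Omega)$), the first term on the right is non-negative by the supersolution hypothesis on $u$; the second, after the minus sign, is non-negative because $u\psi_k \leq 0$ while $\widehat{\mathfrak L} \mathfrak u \geq 0$ weakly. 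Thus $\langle \widetilde{\mathfrak L}_0 u,\, \psi_k \rangle \geq 0$. Unwinding this weak form, using $D_i u = -D_i \psi_k$ on $\mathrm{supp}\,\psi_k$, and combining uniform ellipticity with H\"older (exponents $n,\,2n/(n-2),\,2$) and the Sobolev embedding $\mathring{W}^1_2(\Omega) \hookrightarrow L^{2n/(n-2)}(\Omega)$ (valid since $n \geq 3$) produces the key estimate
\begin{equation*}
\nu\, \|D\psi_k\|_{2,\Omega}^{\,2} \;\leq\; C_S\, \| d - b \|_{n,\,\mathrm{supp}(D\psi_k)}\, \|D\psi_k\|_{2,\Omega}^{\,2}.
\end{equation*}

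The argument then finishes by applying this estimate twice, each time invoking the absolute continuity of the Lebesgue integral. Since $u$ is a.e.\ finite, $|A_k| \to 0$ as $k \to \infty$, hence $\|d-b\|_{n, A_k} \to 0$; for $k$ sufficiently large the estimate forces $\|D\psi_k\|_2 = 0$, so $\psi_k \equiv 0$ by Poincar\'e, proving $u_- \in L^\infty(\Omega)$. Setting $K^* := \mathrm{ess\,sup}\,u_-$, I would then argue by contradiction: if $K^* > 0$, then on the level set $E := \{u_- = K^*\}$ one has $Du_- = 0$ a.e.\ (Stampacchia's lemma), so $\mathrm{supp}(D\psi_k) \subset A_k \setminus E$ modulo null sets for every $k \in (0, K^*)$. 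As $k \uparrow K^*$, the sets $A_k \setminus E$ shrink to a null set, whence $\|d-b\|_{n, A_k \setminus E} \to 0$ and the estimate again forces $\psi_k \equiv 0$ for $k$ close to $K^*$; this contradicts $\psi_k > 0$ on $A_k$, a set of positive measure by definition of $K^*$. Therefore $K^* = 0$ and $u \geq 0$ in $\Omega$.

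The hard part will be the rigorous justification of the key identity: $u\eta$ must lie in $\mathring{W}^1_2(\Omega)$ to be an admissible test function in $\langle \widehat{\mathfrak L}\mathfrak u,\cdot\rangle$, and $\psi_k^2$ need not belong to $W^1_2$ when $\psi_k$ is only in $L^{2n/(n-2)}$. The standard remedy is to replace $\psi_k$ by the $L^\infty$-truncation $\min(\psi_k, M)$, run the above argument with $M$ fixed, and pass to the limit $M \to \infty$ using uniform Sobolev bounds; this is technical but routine, and essentially forces the choice of the combined test function.
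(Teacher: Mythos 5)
Your proposal is correct and follows essentially the same route as the paper's own proof: the same test function $(u+k)_-$, the same regrouping so that the $d^i,c$ terms combine into $\langle\widehat{\mathfrak L}\mathfrak u,\,u\eta\rangle$ with the right sign, the same H\"older--Sobolev energy estimate, and the same absolute-continuity argument on shrinking level sets. The paper organizes the endgame as a single case split (${\rm ess\,inf}\,u=-\infty$ versus finite, the latter using $\mathscr A_k=\{-A<u<-k,\ Du\neq0\}$), which is precisely your Stampacchia-type step packaged slightly differently.
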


\begin{proof}
1. First of all, note that the bilinear form $\langle \widehat{\mathfrak L}u,\eta\rangle$ is continuous on $W^1_{2,{\rm loc}}(\Omega)\times{\stackrel{\circ}W\vphantom {W}\!\!^1_2}(\Omega')$ if $\overline{\Omega}{}'\subset\Omega$. Indeed, applying the H\"older and Sobolev inequalities, we obtain
$$
\aligned
|\langle \widehat{\mathfrak L}u,\eta\rangle|\leq &\  \nu^{-1}\|Du\|_{2,\Omega'}\|D\eta\|_{2,\Omega'}+\|{\bf b}\|_{n,\Omega}\|Du\|_{2,\Omega'}\|\eta\|_{2^*,\Omega'}\\
+ &\ \|{\bf d}\|_{n,\Omega}\|D\eta\|_{2,\Omega'}\|u\|_{2^*,\Omega'}+\|c\|_{\frac n2,\Omega}\|u\|_{2^*,\Omega'}\|\eta\|_{2^*,\Omega'}\\
\leq &\ C\big(\|Du\|_{2,\Omega'}+\|u\|_{2,\Omega'}\big)\cdot\|D\eta\|_{2,\Omega'}
\endaligned
$$
(here and below $2^*=\frac {2n}{n-2}$ is the critical Sobolev exponent). Therefore, in the definition of a weak solution (sub/supersolution), one can take any test functions $\eta\in{\stackrel{\circ}W\vphantom {W}\!\!^1_2}(\Omega)$ with compact support.
\medskip

2. Assume the converse. Let ${\rm ess}\inf\limits_{\Omega}u=-A<0$ (the case $A=\infty$ is not excluded). Then for any $0<k<A$ the function $\eta=(u+k)_-\in{\stackrel{\circ}W\vphantom {W}\!\!^1_2}(\Omega)$ is non-negative and compactly supported in $\Omega$, and therefore the inequality (\ref{eq:supersol}) holds true. Since $D(u+k)_-=-Du\cdot\chi_{\{u<-k\}}$, this gives
$$
\aligned
\int\limits_{\{u<-k\}}\!a^{ij}D_juD_iu\,dx\leq &  \int\limits_{\{u<-k\}}\!(b^iD_iu\,\eta+d^iuD_i\eta+cu\eta)\,dx \\
= & \int\limits_{\{u<-k\}}\!(b^i-d^i)D_iu\,\eta\,dx+\int\limits_{\{u<-k\}}\!(d^iD_i(u\eta)+c(u\eta))\,dx.
\endaligned
$$
The latter term here is non-positive, since ${\mathfrak u}\equiv1$ is a weak supersolution. Using (\ref{eq:uniell}) in the left-hand side and the H\"older and Sobolev inequalities in the right-hand side, we arrive at
\begin{equation}
\label{eq:u<-k}
\nu \|Du\|^2_{2,\{u<-k\}}\leq 
(\|{\bf b}\|_{n,\{u<-k\}}+\|{\bf d}\|_{n,\{u<-k\}}) \|Du\|^2_{2,\{u<-k\}}. 
\end{equation}
If $A=\infty$ then the first factor in the right-hand side tends to zero as $k\to\infty$, which gives a contradiction.

If $A<\infty$ then $Du=0$ almost everywhere on the set $\{u=-A\}$, and we can rewrite (\ref{eq:u<-k}) as follows: $\nu\leq \|{\bf b}\|_{n,\mathscr{A}_k}+\|{\bf d}\|_{n,\mathscr{A}_k}$,
where
$$
\mathscr{A}_k=\{x\in\Omega\,\big|\, -A<u(x)<-k,\ Du(x)\ne0\}.
$$
Evidently, $|\mathscr{A}_k|\to0$ as $k\to A$. Therefore, $\|{\bf b}\|_{n,\mathscr{A}_k}+\|{\bf d}\|_{n,\mathscr{A}_k}\to0$, and we again have a contradiction.
\end{proof}

\begin{rem}
In a recent paper \cite{KR2020}, the weak maximum principle is proved in the so-called John domain for functions $u\in W^1_2(\Omega)$ under the following assumptions:
\begin{itemize}
    \item $\widehat{\mathfrak L}u\geq0$ in $\Omega$;
    \item the conormal derivative condition  $(a^{ij}D_ju+d^iu){\bf n}_i\geq0$ is satisfied instead of $u\geq0$ on $\partial\Omega$ (this means that the inequality (\ref{eq:supersol}) holds for all non-negative functions $\eta\in W^1_2(\Omega)$).

\end{itemize}
\end{rem}

\subsection{The Harnack inequality and the strong maximum principle}

In contrast to non-divergence type operators\footnote{Compare the years of the first obtained results in the table:
\vspace{0.1cm}

\begin{tabular} {|l|c|c|}
\hline & Strong max. principle & The Harnack inequality \\
\hline The Laplace operator & 1839 \cite{G1839E}, \cite{E1839} & 1887 \cite{H1887}\hphantom{, [11]} \\
\hline Operators with smooth coeff. & 1892 \cite{P1892}\hphantom{, [111]} & 1912 \cite{L1912}\hphantom{, [11]} \\
\hline Operators with discont. coeff. & 1927 \cite{H1927}\hphantom{, [111]} & 1955 \cite{S1956}, \cite{BNi1955} \\
\hline\end{tabular}
\smallskip
}, almost all results on the strong maximum principle for divergence type operators were obtained as a consequence of the corresponding Harnack inequalities. In this regard, we present the history of these results in parallel.
\medskip

The works of W. Littman \cite{Lt1959}, \cite{Lt1963} stand somewhat apart. They deal with operators
\begin{equation}
\label{double-div}
{\cal L}^*\equiv -D_i D_ja^{ij}(x)-D_ib^i(x),
\end{equation}
formally adjoint to operators of the form (\ref{eq:nondiv_operator}). A weak supersolution to the equation ${\cal L}^*u+cu=0$ is a function $u\in L_{1,{\rm loc}}(\Omega)$ such that for any non-negative test function $\eta\in {\cal C}^\infty_0(\Omega)$ the inequality
$$
\langle {\cal L}^*u+cu,\eta\rangle:=\int\limits_{\Omega}u ({\cal L}\eta+c\eta)\,dx\geq0
$$
holds true. In \cite{Lt1959} the coefficients of the operator were assumed smooth, while in \cite{Lt1963} the conditions were substantially weakened. Let us formulate the latter result.
\medskip

\noindent\textit{Let ${\cal L}$ be an operator of the form (\ref{eq:nondiv_operator}). Suppose that $a^{ij}$, $b^i$, and $c$ belong to ${\cal C}^{0,\alpha}(\Omega)$, $\alpha\in(0,1)$, and the assumption (\ref{eq:uniell}) is satisfied. Let $u$ be a weak supersolution to the equation ${\cal L}^*u+cu=0$ in $\Omega$. Then}
\vspace{-0.1cm}

\begin{enumerate}
\item \textit{$u$ cannot attain zero minimum in $\Omega$ unless $u\equiv0$.}
\item
\textit{If ${\mathfrak u}\equiv1$ is a weak supersolution to the equation ${\cal L}^*u+cu=0$ in $\Omega$\footnote{In this case, it means that $-D_i D_j(a^{ij})-D_i(b^i)+c\geq0$ in the sense of distributions.} then $u$ cannot attain negative minimum in $\Omega$ unless $u\equiv const$ (in this case $u$ is a weak solution).}
\item
\textit{If $-{\mathfrak u}$ is a weak supersolution to the equation ${\cal L}^*u+cu=0$ in $\Omega$ then $u$ cannot attain positive minimum in $\Omega$ unless $u\equiv const$ (in this case $u$ is a weak solution).}
\end{enumerate}
Further developments of these results for operators of the form (\ref{double-div}) can be found in the papers \cite{Es1994}, \cite{Es2000}, \cite{MMcO2007},  \cite{DEK2018} (see also the references therein).\medskip

Let us return to divergence type equations. The Harnack inequality for a uniformly elliptic operator ${\mathfrak L}_0$ with measurable coefficients was first proved by J.~Moser \cite{Mo1961}.\footnote{ As shown in \cite{DiBTr1984} (see also \cite{DiB1989} and \cite{LZh2017}), the inequality (\ref{eq:Harnack}) can also be obtained from E.~De Giorgi's proof \cite{DeG1957} of the H\"older continuity of weak solutions to the equation ${\mathfrak L}_0u=0$.} In the paper by G.~Stampacchia \cite{St1965} this result was generalized to operators of the form (\ref{eq:hat-L}) under the conditions
\begin{equation}
\label{eq:Stamp}
b^i\in L_n(\Omega),\quad d^i\in L_q(\Omega),\quad c\in L_{\frac q2}(\Omega),\qquad q>n.
\end{equation}
A similar result can be extracted from the paper \cite{S1964} devoted to quasilinear equations.

As a corollary, the strong maximum principle is proved in \cite{St1965}\footnote{See also \cite{Ch1967} and \cite{HH1969}.} in two versions:
\begin{enumerate}
    \item for the operator $\widehat{\mathfrak{L}}$ provided ${\rm ess}\inf\limits_{\Omega}u=0$;
    \item for the operator ${\mathfrak{L}}$.
\end{enumerate}
We give a somewhat simplified proof of the second assertion. This proof is based on Moser's idea \cite{Mo1960}, but does not use the Harnack inequality.

\begin{thm}\label{thm:SMP-div}
Let $\mathfrak{L}$ be a uniformly elliptic operator of the form (\ref{eq:div_operator}) in the domain $\Omega\subset\mathbb R^n$, $n\ge3$, and let $b^i\in L_n(\Omega)$. Suppose that $u\in W^1_{2,{\rm loc}}(\Omega)$ and ${\mathfrak L}u\geq0$ in $\Omega$. If $u$ attains its minimum\footnote{This statement is understood as follows: ${\rm ess}\liminf\limits_{x\to x^0}u={\rm ess}\inf\limits_{\Omega}u$.} at a point $x^0\in\Omega$, then $u\equiv const$. 
\end{thm}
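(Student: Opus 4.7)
The proof uses Moser's logarithmic test function, but only in its one-sided form, so that the full Harnack inequality is avoided. Since ${\mathfrak L}$ has no zero-order term, the shift $u \mapsto u - {\rm ess}\inf_\Omega u$ preserves the supersolution property, and I may assume $u\geq 0$ with ${\rm ess}\inf_\Omega u=0$; proceed by contradiction under the assumption $u\not\equiv 0$. For $\varepsilon>0$ and a nonnegative cutoff $\phi\in {\cal C}^\infty_0(\Omega)$, the function $\eta=\phi^2/(u+\varepsilon)$ is an admissible nonnegative test function (it belongs to ${\stackrel{\circ}W\vphantom{W}\!\!^1_2}(\Omega)$ with compact support since $u+\varepsilon\geq \varepsilon$). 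Setting $W_\varepsilon=\log(u+\varepsilon)$, a direct calculation starting from (\ref{eq:supersol}) (with $d^i\equiv c\equiv 0$) yields
\begin{equation*}
\int \phi^2 a^{ij} D_i W_\varepsilon D_j W_\varepsilon \, dx \leq 2\int \phi\, a^{ij} D_j W_\varepsilon D_i\phi \, dx + \int b^i D_i W_\varepsilon \, \phi^2 \, dx.
\end{equation*}
The first term on the right is absorbed via Cauchy--Schwarz and Young; the second, where the borderline integrability $b^i\in L_n(\Omega)$ enters crucially, is handled by H\"older with exponents $n,2,\frac{2n}{n-2}$ combined with the Sobolev embedding $\|\phi\|_{\frac{2n}{n-2},\Omega}\leq S\|D\phi\|_{2,\Omega}$. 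This produces the logarithmic Caccioppoli bound
\begin{equation*}
\int \phi^2 |DW_\varepsilon|^2 \, dx \leq C(n,\nu,\|b\|_{n,\Omega}) \int |D\phi|^2 \, dx,
\end{equation*}
uniform in $\varepsilon>0$.

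The remainder follows Moser's iteration scheme. The Poincar\'e inequality converts this bound into a uniform BMO estimate for $W_\varepsilon$ on subdomains compactly contained in $\Omega$, and the John--Nirenberg inequality provides an $\alpha>0$ (independent of $\varepsilon$) such that
\begin{equation*}
\int_{B_r}(u+\varepsilon)^\alpha \, dx \cdot \int_{B_r}(u+\varepsilon)^{-\alpha} \, dx \leq C|B_r|^2
\end{equation*}
for every ball with $\overline{B_{2r}}\subset \Omega$. Supplementing this with a Moser iteration on negative powers (test functions of the form $\phi^2(u+\varepsilon)^{-1-2\beta}$, $\beta>0$), and sending $\varepsilon\to 0$ by monotone convergence, yields the one-sided weak Harnack inequality
\begin{equation*}
{\rm ess}\!\inf_{B_r(y)} u \geq c\left(\frac{1}{|B_{2r}|}\int_{B_{2r}(y)}u^p\,dx\right)^{1/p},\qquad \overline{B_{2r}(y)}\subset \Omega,
\end{equation*}
with $p,c>0$ depending only on $n$, $\nu$, and $\|b\|_{n,\Omega}$. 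This one-sided estimate plays the role of the full Harnack inequality used in earlier proofs.

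To conclude: the hypothesis ${\rm ess}\liminf_{x\to x^0}u=0$ forces ${\rm ess}\inf_{B_r(x^0)}u=0$ for each $r$ with $\overline{B_{2r}(x^0)}\subset\Omega$, so the weak Harnack gives $u\equiv 0$ a.e.\ on $B_{2r}(x^0)$. Let $Z$ be the set of those $y\in\Omega$ admitting a neighborhood on which $u\equiv 0$ a.e. Then $Z$ is open, contains $x^0$, and is also closed in $\Omega$: if $y_k\in Z$ and $y_k\to y\in\Omega$, every small ball around $y$ meets the zero set of $u$ in positive measure, hence ${\rm ess}\inf_{B_s(y)}u=0$ for all small $s>0$, and the weak Harnack places $y$ in $Z$. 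Connectedness of $\Omega$ forces $Z=\Omega$, contradicting $u\not\equiv 0$. The principal technical obstacle is the very first step: absorbing the lower-order term requires the borderline integrability $b^i\in L_n$ together with the Sobolev embedding, and no weaker condition would suffice, as illustrated by the counterexample $u(x)=|x|^2$ recalled earlier in the excerpt.
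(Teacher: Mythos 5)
Your proof is correct in outline, but it follows a genuinely different route from the paper's, and it quietly relies on more machinery than the paper wants to invoke. Both arguments begin with the same logarithmic test function (the paper packages it as $V=\varphi(v_\varepsilon)$ with $\varphi(\tau)=\big(\ln\frac1{1-\tau}\big)_+$, which is your $W_\varepsilon=\log(u+\varepsilon)$ up to sign and shift, truncated at the level $u=k$), and both obtain the same logarithmic Caccioppoli estimate; your observation that the $b$-term is absorbed \emph{without} smallness at this stage, because the log test function linearizes it, is exactly what makes the log step robust. The two proofs then part ways. You follow the classical Moser–Trudinger route: Poincar\'e turns the Caccioppoli bound into a BMO bound, John–Nirenberg gives the exponential integrability $\big(\Xint{-}_{B_r}(u+\varepsilon)^\alpha\big)\big(\Xint{-}_{B_r}(u+\varepsilon)^{-\alpha}\big)\le C$, a Moser iteration on negative powers upgrades this to the weak Harnack inequality, and a connectedness argument finishes. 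The paper, by design, \emph{avoids} John--Nirenberg and the negative-power iteration altogether: it notes that the truncated log $V$ vanishes on the set $\{u\ge k\}$, which has positive measure, and invokes a De Giorgi-type potential lemma (Lemma 5.1 in Ch.~II of Ladyzhenskaya--Uraltseva \cite{LSU1967E}) to convert the gradient bound directly into $\|V\|_{2,B_R}\le C$; it then applies the $L^\infty$--$L^2$ maximum estimate (\ref{eq:estmax}), valid for any subsolution with a convex composition, to get $\sup V\le C$ and hence $\inf u\ge k e^{-C}$ at once. What you gain is the full weak Harnack inequality as a byproduct and a more standard argument; what the paper's route buys is economy (no John--Nirenberg, no second iteration) and a shorter path. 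Two points you should make explicit: (i) the negative-power iteration step (absorbing $\|b\|_n\,\|\phi Dw\|_2\,\|w\phi\|_{2^*}$) requires $\|{\bf b}\|_{n,B_{2r}}$ small, which holds for $r\le R_*$ by absolute continuity of the Lebesgue integral — so your weak Harnack constant is uniform only on sufficiently small balls (the paper says this explicitly in deriving (\ref{eq:ee})); and (ii) the counterexample $u(x)=|x|^2$ you cite at the end is the paper's \emph{non-divergence} example from \S\,\ref{ss:2.4}, while the relevant sharp counterexamples for divergence form are the $n=2$ example $u=\ln^{-1}(|x|^{-1})$ and, under the extra constraint $D_ib^i=0$, the examples of \cite{Fi2013E}, \cite{FiSh2018}.
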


\begin{proof} 1. Similarly to Step 1 of the proof of Theorem~\ref{thm:WMP-div}, in the definition of a weak solution (sub/supersolution) one can take any compactly supported test function $\eta\in{\stackrel{\circ}W\vphantom {W}\!\!^1_2}(\Omega)$.

\medskip

2. Now let $v$ be a weak subsolution, i.e. ${\mathfrak L}v\leq0$ in $\Omega$. We substitute the test function $\eta=\varphi'(v)\cdot\varsigma$ into the inequality $\langle {\mathfrak L}v,\eta\rangle\leq0$. Here $\varsigma$ is a non-negative Lipschitz function supported in
$\overline{B}_{2R}\subset\Omega$, and $\varphi$ is a convex Lipschitz function on $\mathbb R$ that vanishes on the negative semiaxis. This gives
\begin{equation}\label{eq:Moser}
\int\limits_{B_{2R}\cap\{u>0\}}\Big(a^{ij}D_jVD_i\varsigma+\frac {\varphi''(v)}
{\varphi^{\prime2}(v)}\,a^{ij}D_jVD_iV\varsigma+b^iD_iV\varsigma\Big)\,dx\leq0,
\end{equation}
where $V=\varphi(v)\in W^1_{2,{\rm loc}}(\Omega)$. In particular, since the second term in (\ref{eq:Moser}) is non-negative, $V$ is also a weak subsolution.

We set\footnote{To be more formal, one should take $\varphi'(v)=p\,\min\{v_+,N\}^{p-1}$ for some $N>0$, and $\varsigma=\varphi(v)\zeta^2$, and then pass to the limit as $N\to\infty$ in (\ref{eq:ee}).} in (\ref{eq:Moser}) $\varphi(\tau)=\tau_+^p$, $p>1$, and $\varsigma=V\zeta^2$, where $\zeta$ is a smooth cut-off function in $B_{2R}$. We arrive at
\begin{equation}\label{eq:Moser_power}
\int\limits_{B_{2R}}\!\tfrac {2p-1}{p}\,a^{ij}D_jVD_iV\,\zeta^2\,dx\leq -\int\limits_{B_{2R}}\!\Big(2a^{ij}D_jV\,VD_i\zeta\,\zeta+b^iD_iV\,V\zeta^2\Big)\,dx.
\end{equation}
We estimate the left-hand side in (\ref{eq:Moser_power}) from below using (\ref{eq:uniell}),
and the right-hand side from above by the H\"older and Sobolev inequalities:
$$
\aligned
&\ \nu \|DV\,\zeta\|^2_{2,B_{2R}}\\
\leq &\  2\nu^{-1}\|DV\,\zeta\|_{2,B_{2R}}\|VD\zeta\|_{2,B_{2R}}+\|{\bf b}\|_{n,B_{2R}}\|DV\,\zeta\|_{2,B_{2R}}\|V\,\zeta\|_{2^*,B_{2R}}\\
\leq &\ N(n)\|{\bf b}\|_{n,B_{2R}} \|DV\,\zeta\|^2_{2,B_{2R}}+C\|DV\,\zeta\|^2_{2,B_{2R}}\|VD\zeta\|^2_{2,B_{2R}}.
\endaligned
$$
By the Lebesgue theorem, we have $N(n)\|{\bf b}\|_{n,B_{2R_*}}\leq\frac {\nu}2$ for sufficiently small $R_*$. For $R\leq R_*$ this implies
\begin{equation}\label{eq:ee}
\|DV\,\zeta\|_{2,B_{2R}}\le
C(n,\nu,\|{\bf b}\|_{n,\Omega})\cdot \|VD\zeta\|_{2,B_{2R}}.
\end{equation}

We put in (\ref{eq:ee}) $R_k=R(1+2^{-k})$, $k\in \mathbb N\cup\{0\}$, and take $\zeta=\zeta_k$ such that
$$ 
\zeta_k\equiv1\ \ \mbox{in}\ \ B_{R_{k+1}},\quad \zeta_k\equiv0\ \ \mbox{outside}\ \ B_{R_k},
\qquad |D\zeta_k|\leq \frac {2^{k+2}}{R}.
$$ 
We obtain
\begin{equation}\label{eq:eee}
\|DV\,\zeta_k\|_{2,B_{R_k}}\leq
\frac {C(n,\nu,\|{\bf b}\|_{n,\Omega})}{R}\cdot 2^k\|V\|_{2,B_{R_k}}.
\end{equation}
Now for $p=p_k\equiv (2^*/2)^k$ we deduce from the Sobolev inequality and (\ref{eq:eee}) that
\begin{multline}\label{eq:iteration}
\bigg(\!\Xint{\quad\ -}\limits_{B_{R_{k+1}}}\!\! v_+^{2p_{k+1}}dx\bigg)^{\frac {1}{2p_{k+1}}}\!\!\leq
\bigg(N(n)\Xint{\ \,  -}\limits_{B_{R_k}} (V\zeta_k)^{2^*} dx\bigg)^{\frac {1}{2^*p_k}}\\
\leq 
\bigg(4^kC\Xint{\ \,  -}\limits_{B_{R_k}} V^2\,dx\bigg)
^{\frac {1}{2p_k}}\!=
\bigg(4^kC\Xint{\ \,  -}\limits_{B_{R_k}} v_+^{2p_k}
dx\bigg)^{\frac 1{2p_k}}\!,
\end{multline}
where $C$ depends only on $n$, $\nu$, and $\|{\bf b}\|_{n,\Omega}$.

Iterating (\ref{eq:iteration}), we conclude that any (weak) subsolution $v$ admits the estimate
\begin{equation}\label{eq:estmax}
{\rm ess}\sup\limits_{B_{R}} v_+\leq C(n,\nu,\|{\bf b}\|_{n,\Omega}) \cdot\bigg(\Xint{\ \, -}\limits_{B_{2R}}\!v_+^2dx\bigg)^{\frac 12}, \qquad R\leq R_*.
\end{equation}

3. Let us now turn to the proof of the Theorem. Without loss of generality, we can suppose that ${\rm ess}\inf\limits_\Omega u=0$.

Assume that the statement is wrong. Then there is an interior point $x^0\in\Omega$ such that ${\rm ess}\liminf\limits_{x\to x^0}u=0$, but for some $k>0$, $\delta >0$ and $R\leq R_*$ the inequality
\begin{equation}\label{eq:tiny1}
\big|\{u\ge k\}\cap B_R(x^0)\big|\geq\delta\cdot 
|B_R|
\end{equation}
holds. Without loss of generality, we assume that
$\overline{B}_{2R}(x^0)\subset\Omega$.
We place the origin at the point $x^0$ and introduce the function $v_\varepsilon(x)=1-\frac uk -\varepsilon$, $\varepsilon>0$. Obviously $v_\varepsilon$ is a subsolution.
\smallskip

We apply the inequality (\ref{eq:Moser}) with $V=\varphi(v^\varepsilon)\equiv \big(\ln\frac 1{1-v^\varepsilon}\big)_+$ (this is allowed since $v^\varepsilon<1$) and $\varsigma=\zeta^2$, where $\zeta$ is a smooth cut-off function equal to $1$ in $B_R$. Taking into account that $\frac {\varphi''}{\varphi^{\prime2}}\equiv1$ and using (\ref{eq:uniell}) together with the H\"older and Sobolev inequalities, we obtain
$$
\aligned
\nu\|DV\,\zeta\|^2_{2,B_{2R}}\leq &\
\!\int\limits_{B_{2R}}a^{ij}D_jVD_iV\,\zeta^2\,dx\leq - \!\int\limits_{B_{2R}}\!\!\Big(2a^{ij}D_jV\,\zeta D_i\zeta+b^iD_iV\,\zeta^2\Big)\,dx\\
\leq &\ C(n,\nu,\|{\bf b}\|_{n,\Omega})\cdot \|DV\,\zeta\|_{2,B_{2R}}\|D\zeta\|_{2,B_{2R}},
\endaligned
$$
whence
\begin{equation}\label{eq:eeee}
\|DV\,\zeta\|_{2,B_{2R}}\leq C(n,\nu,\|{\bf b}\|_{n,\Omega})R^{\frac n2-1}.
\end{equation}

Now we observe that $V$ vanishes on the set $\{u\ge k\}\cap B_R$, and $\zeta\equiv 1$ on this set. It follows from the proof of Lemma 5.1 in \cite[Chapter II]{LSU1967E} that this implies
$$
\big|\{u\ge k\}\cap B_R\big|\cdot V(x)\,\zeta(x)\le
\frac {(4R)^n}{n}\int\limits_{B_{2R}}\frac {|DV(y)|\,\zeta(y)}{|y-x|^{n-1}}\,dy.
$$
We take in both parts the norm in $L_{2^*}$ and estimate the right-hand side by the Hardy--Littlewood--Sobolev inequality (see, e.g., \cite[Theorem 1.18.9/3]{Tb1978}). Taking into account (\ref{eq:tiny1}) and (\ref{eq:eeee}) we obtain
$$
\|V\zeta\|_{2^*, B_{2R}}\leq \frac {C(n)}{\delta}\, \|DV\,\zeta\|_{2,B_{2R}}\leq C(n,\nu,\delta,\|{\bf b}\|_{n,\Omega})R^{\frac n2-1},
$$
and therefore,
$$
\bigg(\Xint{\, -}\limits_{B_R}V^2dx\bigg)^{\frac 12}\leq C(n)R^{1-\frac n2} 
\|V\zeta\|_{2^*, B_{2R}}\leq
C(n,\nu,\delta,\|{\bf b}\|_{n,\Omega}).
$$
Finally, since $V$ is a subsolution, we can apply the estimate (\ref{eq:estmax}). This gives ${\rm ess}\sup\limits_{B_{R/2}} V_+\leq C$, which is equivalent to
$$
{\rm ess}\inf\limits_{B_{R/2}}u\geq k(\exp(-C)-\varepsilon).
$$
Since the constant $C$ does not depend on $\varepsilon$, we get a contradiction with the assumption ${\rm ess}\liminf\limits_{x\to 0}u=0$.
\end{proof}

\begin{rem}
The last term in (\ref{eq:Moser_power}) can be estimated as follows:
$$
\int\limits_{B_{2R}}|b^iD_iV\,V\zeta^2|\,dx
\leq \|{\bf b}\|_{L_{n,\infty}(B_{2R})}\|DV\,\zeta\|_{2,B_{2R}}\|V\,\zeta\|_{L_{2^*,2}(B_{2R})}
$$
(recall that $L_{p,q}$ is the Lorentz space). Then one can use the strengthened Sobolev embedding theorem ${\stackrel{\circ}W\vphantom {W}\!\!^1_2}(\Omega) 
\hookrightarrow L_{2^*,2}(\Omega)$ (see \cite{Pe1966}). This implies that the assumption $b^i\in L_n(\Omega)$ can be weakened to $b^i\in L_{n,q}(B_{2R})$ with any $q<\infty$. The counterexample in the beginning of \S\,\ref{ss:2.4} shows that one cannot put in general $q=\infty$. However, if the norm $\|{\bf b}\|_{L_{n,\infty}(\Omega)}$ is sufficiently small then the proof runs without changes. 

The Harnack inequality also holds true under the same assumptions (the proof of Theorem 2.5\,$^\prime$ in \cite{NU2011E} can be transferred completely to this case).
\end{rem}

\begin{rem}
In the two-dimensional case, the statement of Theorem~\ref{thm:SMP-div} (and even Theorem~\ref{thm:WMP-div}) is false\footnote{This fact is not noted in \cite{St1965} and \cite{Ch1967}.}; here is a corresponding counterexample from the paper \cite{Fi2013E}.
\medskip

For $n=2$ we set $u(x)=\ln^{-1}(|x|^{-1})$. Obviously, for $r\leq \frac 12$ the function $u\in W^1_2(B_r)$ is a weak solution of the equation
$$
-\Delta u+b^i(x)D_iu=0\qquad\mbox{with} \quad 
b^i(x)=\frac{2x_i}{|x|^2\ln(|x|^{-1})}\in L_2(B_r).
$$
However, $u$ attains its minimum at the origin. 
\medskip

Thus, for $n=2$ the condition on $b^i$ must be strengthened. For example, one can estimate the last term in (\ref{eq:Moser_power}) as follows (cf. \cite[Theorem 3.1]{ANPS2021}):
$$
\int\limits_{B_{2R}}|b^iD_iV\,V\zeta^2|\,dx
\leq \|{\bf b}\|_{L_{\Phi_1}(B_{2R})}\|DV\,\zeta\|_{2,B_{2R}}\|V\,\zeta\|_{L_{\Phi_2}(B_{2R})},
$$
where $L_\Phi$ stands for the Orlicz space with the $N$-function $\Phi$ (see, 
e.g., \cite[Section 10]{BIN1996E}), 
$$
\Phi_1(t)=t^2\ln(1+t), \qquad \Phi_2(t)=\exp(t^2)-1.
$$
Using the Yudovich--Pohozhaev embedding theorem ${\stackrel{\circ}W\vphantom 
{W}\!\!^1_2}(\Omega) 
\hookrightarrow L_{\Phi_2}(\Omega)$ (see, e.g., \cite[Subsection 10.6]{BIN1996E}), we obtain the strong maximum principle under the assumption ${\bf b}\ln^{\frac 12}\big(1+|{\bf b}|\big)\in L_2(\Omega)$ which was introduced in \cite{NU2011E}. Under the same assumption, the Harnack inequality also holds (see \cite[Theorem 2.5\,$^\prime$]{NU2011E}). The above example shows that the power $\frac 12$ of the logarithm cannot be reduced.
\end{rem}

Since the second half of the 1960s the number of papers on the Harnack inequality for divergence type equations (even linear ones) has grown rapidly. We will focus on three important directions in the development of this topic.

\paragraph{1. Non-uniformly elliptic operators.} In several papers, operators with the ellipticity condition (\ref{eq:ell-x}) were studied under various assumptions about the functions $\nu(x)$ and ${\cal V}(x)$.
\medskip

N.S.~Trudinger \cite{Tr1971} proved the Harnack inequality for operators ${\mathfrak L}_0$ under the assumption
$$
\nu^{-1}\in L_q(\Omega),\quad \nu^{-1}{\cal V}^2\in L_r(\Omega), \qquad \frac 1q+\frac 1r <\frac 2n.
$$
In \cite{Tr1973}, operators of more general form (\ref{eq:hat-L}) were considered under a weaker condition
\begin{equation}\label{eq:Tru}
\nu^{-1}\in L_q(\Omega),\quad {\cal V}\in L_r(\Omega), \qquad \frac 1q+\frac 1r <\frac 2n;
\end{equation}
the lower-order coefficients were assumed to satisfy some weighted summa\-bil\-i\-ty conditions determined by the matrix ${\cal A}$.\footnote{In the case of a uniformly elliptic operator, these conditions are close to the Stampacchia conditions (\ref{eq:Stamp}).}

Under these conditions, the Harnack inequality was established in \cite{Tr1973}, as well as the strong maximum principle in the following form:
\medskip

\noindent\textit{Let $u$ be a weak supersolution of the equation $\widehat {\mathfrak L}u=0$ in $\Omega$. If ${\mathfrak u}\equiv1$ is also a supersolution, then $u$ cannot attain its negative minimum in $\Omega$ unless $u\equiv const$ (in this case $u$ is a weak solution).}
\medskip

For operators of the simplest form ${\mathfrak L}_0$, the restriction on exponents in (\ref{eq:Tru}) was weakened to $\frac 1q+\frac 1r <\frac 2{n- 1}$ in the recent paper \cite{BSch2021}. On the other hand, an example in the paper \cite{FSSC1998} shows that for $n\ge4$ and $\frac 1q+\frac 1r >\frac 2{n-1}$ the equation ${\mathfrak L}_0u= 0$ in $B_R$ can have a weak solution unbounded in $B_{\frac{R}{2}}$. The question of the validity of the Harnack inequality in the borderline case $\frac 1q+\frac 1r =\frac 2{n-1}$ is still open.
\medskip

In \cite{FKS1982}, operators ${\mathfrak L}_0$ were considered under the following conditions\footnote{These conditions appeared earlier in the paper \cite{EP1972}, devoted to quasilinear equations, but there additional restrictions (\ref{eq:Tru}) were imposed on the functions $\nu(x)$ and ${\cal V}(x)$.}:
\begin{enumerate}
     \item there exists $N\geq1$ such that ${\cal V}(x)\leq N\cdot\nu(x)$ for almost all $x\in\Omega$;
     \item $\nu$ belongs to the Muckenhoupt class $A_2$, i.e.
\begin{equation}\label{eq:Mc2}
\sup\limits_{x\in\R^n,\,r>0}\Big(\!\Xint{\quad -}\limits_{B_r(x)}\nu(y)\,dy \cdot\!\!\Xint{\quad -}\limits_{B_r(x)}\nu^{-1}(y)\,dy\Big)<\infty.
\end{equation}
\end{enumerate}
Under these conditions, the Harnack inequality and the strong maximum principle are proved in \cite{FKS1982}. In addition, a counterexample showing that weakening the condition
$\nu\in A_2$ to $\nu\in \bigcup_{p>2}A_p$ does not ensure the fulfillment of the Harnack inequality\footnote{This counterexample does not violate the strong maximum principle.} is given there.

In \cite{DeCV1996}, the results of \cite{FKS1982} were generalized to operators of the form (\ref{eq:hat-L}). In this case, 
the lower-order coefficients satisfy the following conditions:
\begin{equation}\label{eq:DECV}
\frac {b^i}\nu\in L_m(\Omega),\quad \frac {d^i}\nu\in L_q,\quad \frac c\nu\in L_{\frac q2},\qquad q>m,
\end{equation}
(here $m$ is the exponent called in \cite{DeCV1996} ``intrinsic dimension'' generated by the behavior of the weight $\nu$; for uniformly elliptic operators we have $m=n$, and these conditions become (\ref{eq:Stamp})).

We also mention the papers \cite{CW1986} and \cite{ChRuSe1989}, where the Harnack inequality was established for the operator ${\mathfrak L}_0$ under ``abstract'' conditions on the func\-tions $\nu(x)$ and ${ \cal V}(x)$. Namely, certain weighted Sobolev and Poincaré in\-equal\-i\-ties should be satisfied.

\paragraph{2. Lower-order coefficients from the Kato classes.} Notice that the Lebesgue spaces (as well as the Lorentz and Orlicz spaces) are rearrangement invariant: the norm of a function $f$ in these spaces is determined only by the behavior of the measure of the set $\{x\in\Omega\,\big| \, |f(x)|>N\}$ as $N\to\infty$. A more subtle description of the coefficients singularities can be given in terms of the Kato classes.
\medskip

Recall that a function $f\in L_1(\Omega)$ belongs to the class ${\cal K}_{n,\beta}$, $\beta\in(0,n)$, if  
\begin{equation}\label{eq:omega-r}
\omega_\beta(r):=\sup_{x\in\Omega}\,\int\limits_{\Omega\cap B_r(x)}\frac{|f(y)|}{|x-y|^{n-\beta}}\,dy\to 0 \quad\mbox{as} \quad r\to 0.
\end{equation}
As usual, $f\in{\cal K}_{n,\beta,{\rm loc}}$
means that $f\chi_{\Omega'}\in{\cal K}_{n,\beta}$  for arbitrary subdomain $\Omega'$ such
that $\overline{\Omega}{}'\subset\Omega$.

The functionals $\omega_\beta(r)$ and the spaces defined by them were introduced by M. Schechter in \cite{Sch1968}, \cite[Ch. 5,\S\,1; Ch. 7, \S\,7]{Sch1971} and studied in detail in \cite{Sch1986}\footnote{For particular values of $\beta$, condition (\ref{eq:omega-r}) was used in \cite{Stu1956} and \cite{Kato1972}. In this regard, ${\cal K}_{n,\beta}$ are usually called the Kato or Kato--Stummel classes, which is a typical example of Arnold's principle \cite{Ar1998E}. Some generalizations of the ${\cal K}_{n,\beta}$ classes can be found, for instance, in \cite{ErG2005}.}. For further development of the subject and references see \cite{ZhY2009}.

All the results of this subsection refer to the case $n\geq3$.
\medskip

In the paper \cite{AiS1982}, the Harnack inequality was established for the operator $-\Delta+c(x)$ under the assumption $c\in {\cal K}_{n,2}$. In \cite{ChFG1986} this result was extended to uniformly elliptic operators of the form ${\mathfrak L}_0+c(x)$ under the same condition\footnote{See also \cite{CrFZh1988} and \cite{Si1990} in this connection. \label{foot:51}}.

In the paper \cite{Kur1994}, the Harnack inequality was proved for uniformly elliptic operators of a more general form ${\mathfrak L}+c(x)$ under the assumption\footnote{In an earlier paper \cite{CrZh1987} the operator $-\Delta +b^i(x)D_i$ was considered under stronger restrictions $(b^i)^2\in {\cal K}_{n,2,{\rm loc}}$ and $b^i\in {\cal K}_{n,1,{\rm loc}}$.
 \label{foot:52}} 
\begin{equation} \label{eq:Kato2}
(b^i)^2, c\in {\cal K}_{n,2,{\rm loc}}.
\end{equation}
Finally, the paper \cite{Za2002} combines the two directions described above. Namely, the Harnack inequality is proved for the non-uniformly elliptic operators of the form (\ref{eq:hat-L}). The functions $\nu(x)$ and ${\cal V}(x)$ in (\ref{eq:ell-x}) satisfy the assumptions ${\cal V}(x)\leq N\cdot\nu(x)$ and (\ref{eq:Mc2}), while the functions $(b^i)^2$, $(d^ i)^2$ and $c$ belong to the weighted analogue of the Kato class ${\cal K}_{n,2}$ with an additional constraint\footnote{We assume that this constraint is of a technical nature, but as far as we know, this question remains open.}: the corresponding analog of the function $\omega_2$ from (\ref{eq:omega-r}) admits the estimate $O(r^\gamma)$ for some $\gamma>0$ as $r\to0$.
\medskip

The assumption (\ref{eq:Kato2}) in the general case is very close to optimal. Variations of (\ref{eq:Kato2}) are possible if some additional conditions are imposed on the matrix ${\cal A}$.

The paper \cite{Zh1996} considers a uniformly elliptic operator of the form (\ref{eq:div_operator}) with $a^{ij}\in{\cal C}^{0,\alpha}(\Omega)$, $\alpha\in(0,1)$. This restriction allowed to prove the Harnack inequality under the assumption $b^i\in{\cal K}_{n,1}$.

Note that the H\"older condition on the principal coefficients in \cite{Zh1996} is redundant: using the estimates of the Green's function and its derivatives from \cite{GrW1982}, the same result can be obtained for $a^{ij}\in{\cal C}^ {0,{\cal D}}(\Omega)$.

In the recent paper \cite{KzNa2021}, a case in a sense intermediate has been considered. The principal coefficients of the uniformly elliptic operator ${\mathfrak L}$ in this paper belong to the Sarason space $VMO(\Omega)$. This means that $\omega^{ij}(\rho)\to 0$ as $\rho\to 0$, where
\begin{equation} \label{eq:BMO}
\omega^{ij}(\rho):=\sup_{x\in\Omega}\,\sup_{r\leq\rho}\!\!\! \Xint{\qquad -}\limits_{\Omega\cap B_r(x)}\Big|a^{ij}(y)-\!\!\!\!\!\!\Xint{\qquad -} \limits_{\Omega\cap B_r(x)}\!a^{ij}(z)\,dz\Big|\,dy .
\end{equation}
In this case, the condition $|b^i|^{\beta}\in {\cal K}_{n,\beta}$, $\beta>1$ is imposed on the lower-order coefficients with the additional restriction\footnote{In the case of $n=2$, also studied in \cite{KzNa2021}, the condition (\ref{eq:Kato-beta}) is somewhat modified.}
\begin{equation} \label{eq:Kato-beta}
\sup_{x\in\Omega}\int\limits_{\ \Omega\cap B_r(x)\setminus B_{\frac r2}(x)}\frac{|b^i(y)|^{\beta}}{|x-y|^{n-\beta}}\,dy\leq \sigma^\beta(r), \quad \sigma\in{\cal D}.
\end{equation}
For such operators the strong maximum principle is proved in  \cite{KzNa2021}. Note that the Harnack inequality can also be proved under these assumptions. Whether it is possible to remove or at least relax the restriction (\ref{eq:Kato-beta}) is still unclear.

\paragraph{3. Operators with ${\rm div}({\bf b})\leq0$.} When studying hydrodynamic problems, one often encounters (see, for example, \cite{Zh2004}, \cite{CSYT2008}, \cite{CSTY2009}, \cite{KNSS2009}) the operators $-\Delta+b^i(x)D_i$ (or, more generally, operators of the form (\ref{eq:div_operator})) with the additional structure condition $D_i(b^i)=0$ or $D_i(b^i)\le0$ understood in the sense of distributions. Recall that this means, respectively,
$$
\int\limits_{\Omega}b^iD_i\eta\,dx=0 \quad\mbox{for all}\quad \eta\in {\cal C}^\infty_0(\Omega)
$$
or
$$
\int\limits_{\Omega}b^iD_i\eta\,dx \geq 0 \quad\mbox{for all non-negative}\quad \eta\in {\cal C}^\infty_0(\Omega).
$$
This condition allows to significantly weaken the regularity assumptions for the coefficients $b^i$.
\medskip

In the paper \cite{SSSZ2012}, the Harnack inequality was established for the operator $-\Delta+b^i(x)D_i$ with $D_i(b^i)=0$ under the assumption $b^i\in BMO^{-1}(\Omega)$. It means that
$b^i=D_j(B^{ij})$ in the sense of distributions,
where $B^{ij}\in BMO(\Omega)$, i.e. functions $\omega^{ij}(\rho)$ defined in (\ref{eq:BMO}) (with $B^{ij}$ instead of $a^{ij}$) are bounded\footnote{Obviously, $L_n(\Omega)\subset BMO^{-1}(\Omega)$ due to the embedding $W^1_n(\Omega)\hookrightarrow BMO(\Omega)$. }. If this is true, the relation $D_i(b^i)=0$ is ensured by the additional condition $B^{ij}(x)=-B^{ji}(x)$ for almost all $x\in\Omega$.
\medskip

The paper \cite{NU2011E} studied uniformly elliptic operators of the form (\ref{eq:div_operator}) with $D_i(b^i)\leq0$. The requirements on lower-order coefficients were described in terms of the Morrey spaces.
\medskip

Recall that the space ${\mathbb M}_p^\alpha(\Omega)$, $1\leq p<\infty$, $\alpha\in(0,n)$, consists of functions $f\in L_p( \Omega)$ for which
$$
\|f\|_{\mathbb M^{\alpha}_p(\Omega)}:=
\sup\limits_{B_r(x)\subset\Omega}r^{-\alpha}\|f\|_{p,B_r(x)}<\infty.
$$

In particular, in \cite{NU2011E} the Harnack inequality was proved under the as\-sump\-tion\footnote{Obviously, $L_n(\Omega)\subset \mathbb M^{\frac nq-1}_q(\Omega)$ by the H\"older inequality.} $b^i\in \mathbb M^{\frac nq-1}_q(\Omega)$, $\frac n2<q<n$. N.D. Filonov constructed an extremely subtle counterexample (\cite[Theorem 1.6]{Fi2013E}) showing that even under the assumption $D_i(b^i)=0$ the exponent $\alpha=\frac nq-1$ cannot be reduced.\medskip

The strong maximum principle was established in \cite{NU2011E} for Lipschitz supersolutions\footnote{For weak supersolutions the requirements on $b^i$ in \cite{NU2011E} are somewhat stronger.} under the assumption $b^i\in L_q(\Omega)$, $q>\frac n2$. However, using the approximation (\cite[Theorem 3.1]{FiSh2018}) one can obtain the following partial gen\-er\-al\-iza\-tion of this result:
\medskip

\noindent\textit{Let $\Omega\subset\mathbb R^n$, $n\ge3$. Suppose that the function $u\in W^1_{2,{\rm loc}}(\Omega)$ is a weak solution of the equation $-\Delta u+b^i(x)D_iu=0$ in $\Omega$ , and
$$
D_i(b^i)=0;\quad b^i\in L_q(\Omega); \quad q>\frac n2\ \ \mbox{for}\ \ n\geq4;\quad q=2\ \ \mbox{for}\ \ n=3.
$$
If $u$ attains its minimum at a point $x^0\in\Omega$ then $u\equiv const$.}\medskip

On the other hand, the following counterexample was constructed in \cite{FiSh2018}.\medskip

Let $n\geq4$, and let $u(x)=\ln^{-1}(|x'|^{-1})$. Then $u\in W^1_2(B_r)$ for $r\leq \frac 12$. Further, direct calculation shows that $u$ is a weak solution of the equation $-\Delta u+b^i(x)D_iu=0$ with\footnote{There is a typo in \cite{FiSh2018} in the formula for $b^n$.}

$$
b^i(x)=\begin{cases}
\bigg(\dfrac{n-3}{|x'|}+\dfrac 2{|x'|\ln(|x'|^{-1})}\bigg)\, \dfrac {x_i}{|x'|},
& i<n;\\
\\
-\bigg(\dfrac{(n-3)^2}{|x'|}+\dfrac  {2(n-3)}{|x'|\ln(|x'|^{-1})}+\dfrac  2{|x'|\ln^2(|x'|^{-1})}\bigg)\,\dfrac {x_n}{|x'|},
& i=n.
\end{cases}
$$
It is easy to see that $D_i(b^i)=0$, and $b^i\in L_q(B_r)$ for all $q<\frac{n-1}2$. However, the strong maximum principle does not hold. 

In the recent paper \cite{FiHd2021E} (see also \cite{Hd2021E}), a vector field ${\bf b}\in L_{\frac{n-1}{2}}(B_r)$ with $ D_i(b^i)=0$ is constructed, for which the equation $-\Delta u+b^i(x)D_iu=0$ has a weak solution unbounded in $B_{\frac{r}{2}}$. This can also be considered as violation of the strong maximum principle. The question of the validity of the strong maximum principle for $\frac{n-1}2<q\leq\frac n2$ under the assumption $D_i(b^i)=0$ is open.

\subsection{The normal derivative lemma}

The history of the normal derivative lemma for weak (super)solutions of the equation ${\mathfrak L}u=0$ is rather short. The first result here was obtained by R. Finn and D. Gilbarg in 1957, see \cite{FGi1957}. They considered uniformly elliptic operators of the form (\ref{eq:div_operator}) with $a^{ij}\in{\cal C}^{0,\alpha}(\Omega)$ and $b^i\in{\cal C}(\overline\Omega)$ in a two-dimensional ${\cal C}^{1,\alpha}$-smooth domain, $\alpha\in(0,1)$.

Only in 2015 this result was generalized to the $n$-dimensional case \cite{SdL2015}; the boundary of the domain in this paper was assumed to be smooth\footnote{Also in \cite{SdL2015} some papers with incorrect use of the normal derivative lemma for weak solutions were listed.}. In \cite{KzKu2018E} the normal derivative lemma was proved for all $n \geq3$ under the same conditions on $a^{ij}$ and $\partial\Omega$ as in \cite{FGi1957}, and for $b^i\in L^q(\Omega)$, $q>n$.
\medskip

Back in 1959, a counterexample showing that the requirement for principal coefficients cannot be relaxed to $a^{ij}\in{\cal C}(\overline\Omega)$ was constructed in \cite{Gi1959}.\footnote{It is given in various forms in \cite[Ch. 3]{GTr1983}, \cite[Ch.2]{PS2007}.} Here we give a more general example (see \cite{Na2012}).
\medskip

Let $\Omega$ be a domain in $\R^n$ such that $\Omega\cap \{x_n<h\}={\mathfrak T}(\phi,h)$ and $\phi\in {\cal C}^1$, but $\phi'$ does not satisfy the Dini condition at zero. As mentioned in \S\ref{ss:2.2}, it is shown in \cite{VM1967E} that the normal derivative lemma for the Laplace operator does not hold in such a domain.

Now we flatten the boundary in a neighborhood of the origin. This gives us an operator ${\mathfrak L}_0$ with {\bf\textit{continuous}} principal coefficients for which the normal derivative lemma fails in a {\bf\textit{smooth}} domain.\medskip

This example shows that the natural condition on the principal 
co\-ef\-fi\-cients of the operator is the Dini condition. In this regard, we 
note the work of V.A. Kozlov and V.G. Maz'ya \cite{KzM2003}. In this paper, for 
the operator ${\mathfrak L}_0$ a more subtle condition on the coefficients 
$a^{ij}$ is obtained, which provides the gradient estimate for the solution at 
points of the (smooth) boundary $\partial\Omega$. Apparently, from the 
asymptotics of the solution obtained in \cite{KzM2003}, one can also deduce a 
condition for the fulfillment of the normal derivative lemma, which is more 
precise than the Dini condition\footnote{B. Sirakov informed us in private 
communication that he has proved the normal derivative lemma provided that 
$a^{ij}$ satisfy the {\bf mean-Dini condition}, that is, $\omega^{ij}\in{\cal 
D}$ in (\ref{eq:BMO}). This assumption is stronger than $a^{ij}\in {\cal 
C}(\overline\Omega)$ but weaker that $a^{ij}\in{\cal C}^{0,{\cal D}}(\Omega)$. 
See in this connection \cite{DEK2018}, where ${\cal C}^1$-estimate up to the 
boundary was proved for solutions to the equations under the same assumption.
}.\medskip

To demonstrate the main idea we prove the normal derivative lemma for the simplest operator ${\mathfrak L}_0$ with $a^{ij}\in{\cal C}^{0,{\cal D}}(\Omega)$ \footnote{Obviously, it suffices to fulfill this condition only in some neighborhood of $\partial\Omega$. Apparently, this condition can be kept only on $\partial\Omega$, see \cite{KzM2003}.} under minimal assumptions on the boundary of the domain.

\begin{thm} \label{thm:HO-div}
Let the domain $\Omega\subset\R^n$ satisfy the interior ${\cal C}^{1,{\cal D}}$-paraboloid condition. Suppose that the coefficients of the operator ${\mathfrak L}_0$ satisfy the conditions (\ref{eq:uniell}) and $a^{ij}\in{\cal C}^{0,{\cal D}}( \Omega)$. Let $u\not\equiv const$ be a weak supersolution
of the equation ${\mathfrak L}_0u=0$ in $\Omega$.

If $u$ is continuous in $\overline{\Omega}$ and attains its minimum at $x^0\in \partial\Omega$, then
for any strictly interior direction $\boldsymbol{\ell}$ the inequality
$$
\liminf\limits_{\varepsilon\to+0}\,\frac  {u(x^0+\varepsilon\boldsymbol{\ell})-u(x^0)}\varepsilon\,>0.
$$
holds true.
\end{thm}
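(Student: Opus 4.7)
The plan is the classical barrier method, carried out in an interior paraboloid body that touches $\partial\Omega$ at $x^0$, adapted to the divergence form and to the Dini regularity of both the boundary and the coefficients. First, Theorem~3.2 applied to the weak supersolution $u-u(x^0)$ yields the strict inequality $u(x)>u(x^0)$ throughout $\Omega$. The interior ${\cal C}^{1,{\cal D}}$-paraboloid condition then supplies a body $K\subset\Omega$ congruent to some ${\mathfrak T}(\phi,h)$ with vertex at $x^0$; after a rigid motion I take $x^0=0$ with the axis of $K$ along the positive $x_n$-semiaxis, and after a further linear change of coordinates I assume $a^{ij}(0)=\delta^{ij}$. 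Both transformations preserve the Dini class, and I fix $\hat\sigma\in{\cal D}\cap{\cal C}^1(0,1]$ dominating both $\phi'$ and the modulus of continuity of the $a^{ij}$ (cf.~Remark~1.2).

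The technical heart is the construction of a barrier $v\in W^1_2(K)\cap{\cal C}(\overline K)$ with the properties: $v=0$ on the lateral boundary of $K$, $v>0$ in the interior, $v(0)=0$, $\partial_{\boldsymbol{\ell}}v(0)>0$ for every strictly interior direction $\boldsymbol{\ell}$, and ${\mathfrak L}_0 v\le 0$ in the weak sense in $K$. I would try
$$
v(x)=\rho(x)\bigl(1+C_1 H(\rho(x))\bigr),\qquad H(\rho)=\int_0^\rho\frac{\hat\sigma(t)}{t}\,dt,
$$
where $\rho$ is either the graph distance $x_n-\phi(|x'|)$ or its Lieberman regularization. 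A direct computation yields
$$
-\Delta v=f(\rho)(-\Delta\rho)-f'(\rho)|D\rho|^2,\qquad f'(\rho)\ge C_1\hat\sigma(\rho)/\rho,
$$
so choosing $C_1$ large enough forces $-\Delta v\le 0$ in $K$, since $-\Delta\rho$ is of order $\hat\sigma(|x'|)/|x'|$ by the paraboloid condition. The passage from $-\Delta$ to ${\mathfrak L}_0$ is handled by splitting $a^{ij}(x)=\delta^{ij}+(a^{ij}(x)-\delta^{ij})$ in $\int_K a^{ij}D_jv\,D_i\eta\,dx$: the constant-coefficient contribution is already controlled, while the perturbation is absorbed via $|a^{ij}(x)-\delta^{ij}|\le\hat\sigma(|x|)$ and a further integration by parts, provided one works in a sufficiently small neighborhood $K\cap B_r(0)$ of the vertex.

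With $v$ in hand I select $\varepsilon_0>0$ so small that $\varepsilon_0 v\le u-u(x^0)$ on $\partial K$: on the lateral part this is automatic ($v=0$ there); on the cap $\{x_n=h\}\cap\overline K$ the compactness and strict positivity $u-u(x^0)>0$ from the strong maximum principle allow such a choice. Since $w:=(u-u(x^0))-\varepsilon_0 v$ is then a weak supersolution of ${\mathfrak L}_0 w=0$ in $K$ that is nonnegative on $\partial K$, the weak maximum principle (Theorem~3.1, trivially applicable because ${\mathfrak u}\equiv 1$ solves ${\mathfrak L}_0{\mathfrak u}=0$) yields $w\ge 0$ throughout $K$. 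Restricting to the ray $\{\varepsilon\boldsymbol{\ell}:\varepsilon>0\}$ and dividing by $\varepsilon$ produces
$$
\liminf_{\varepsilon\to+0}\frac{u(x^0+\varepsilon\boldsymbol{\ell})-u(x^0)}{\varepsilon}\ge\varepsilon_0\,\partial_{\boldsymbol{\ell}}v(0)>0,
$$
which is the desired conclusion.

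The principal obstacle is the barrier construction: one must simultaneously absorb the curvature of $\partial K$ (controlled by $\hat\sigma(|x'|)/|x'|$) and the oscillation of the principal coefficients (controlled by $\hat\sigma(|x|)$) into a single correction that still leaves a positive tangential derivative at $0$. The iterated form $H(\rho)=\int_0^\rho\hat\sigma(t)/t\,dt$ is forced precisely because the Dini condition $\int_0^1\hat\sigma(t)/t\,dt<\infty$ makes $H$ bounded (so $v=\rho(1+o(1))$ and $\partial_{\boldsymbol{\ell}}v(0)>0$), while $H'(\rho)=\hat\sigma(\rho)/\rho$ remains unbounded near zero, which is what is needed to dominate both error terms. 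This delicate balance is exactly what makes ${\cal C}^{1,{\cal D}}$ the sharp regularity class for both boundary and coefficients, in agreement with the Widman and Voitovich--Maz'ya counterexamples cited earlier.
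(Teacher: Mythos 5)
Your overall strategy (barrier comparison plus the weak maximum principle to transfer positivity to the boundary derivative) is the right skeleton, but the way you construct the barrier is the step that genuinely fails for a \emph{divergence-form} operator with merely Dini coefficients, and it is precisely where the paper does something different. You propose an explicit pointwise barrier $v=\rho\bigl(1+C_1H(\rho)\bigr)$ and want to verify ${\mathfrak L}_0 v\le 0$ weakly by splitting $a^{ij}=\delta^{ij}+(a^{ij}-\delta^{ij})$ and then ``absorbing the perturbation via a further integration by parts.'' That integration by parts is not available: to pass from $\int_K (a^{ij}-\delta^{ij})D_jv\,D_i\eta\,dx$ to a pointwise sign condition you would need $(a^{ij}-\delta^{ij})D_jv\in W^1_{1,{\rm loc}}$, and Dini continuity of $a^{ij}$ gives no weak differentiability at all. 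This is not a cosmetic issue; it is the reason explicit barriers of Pucci--V\'yborn\'y--Lieberman type work for \emph{non-divergence} operators ${\cal L}_0v=-a^{ij}D_iD_jv$ (where the coefficients are never differentiated) but not directly for ${\mathfrak L}_0$. (There is also a small slip in the displayed identity: with $v=f(\rho)$ one has $-\Delta v=-f'(\rho)\Delta\rho-f''(\rho)|D\rho|^2$, not $f(\rho)(-\Delta\rho)-f'(\rho)|D\rho|^2$, but that is a typo rather than the essential gap.)

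The paper circumvents this by \emph{not} prescribing the barrier by a formula. Instead, after flattening the boundary with a ${\cal C}^{1,{\cal D}}$ change of variables, it takes ${\mathfrak V}$ to be the actual weak solution of ${\mathfrak L}_0{\mathfrak V}=0$ in the annulus $\pi=B_r(x^r)\setminus\overline B_{r/2}(x^r)$ with boundary values $1$ and $0$, so that ${\mathfrak L}_0{\mathfrak V}\le 0$ is automatic. The entire analytic difficulty is moved to showing $\partial_{\boldsymbol{\ell}}{\mathfrak V}(0)>0$: for this one compares ${\mathfrak V}$ with the frozen-coefficient solution $\Psi_{x^*}$ at a \emph{variable} freezing point $x^*$, represents the difference via the Green's function of the constant-coefficient operator, integrates by parts (legitimately, because the frozen operator has smooth coefficients), and then uses the Gr\"uter--Widman estimates $|D{\mathfrak V}|\le N_1/r$ and $|D_xD_y{\cal G}_{x^*}(x,y)|\le N_2|x-y|^{-n}$ to obtain $|D{\mathfrak V}(x^*)-D\Psi_{x^*}(x^*)|\le \frac{N_3}{r}\int_0^{2r}\frac{\sigma(\tau)}{\tau}\,d\tau$. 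This is where the Dini integral enters, uniformly over $x^*$. Your approach of freezing only at the origin gives $|a^{ij}(x)-\delta^{ij}|\le\hat\sigma(|x|)$, which at scale $r$ is $O(\hat\sigma(r))$ rather than $O\bigl(\int_0^{2r}\sigma(\tau)/\tau\,d\tau\bigr)$ and, more importantly, provides no mechanism for verifying the weak differential inequality. If you want to push through a barrier-type proof you essentially have to reproduce the Finn--Gilbarg/Gr\"uter--Widman perturbation argument; the explicit-barrier route as written does not close.
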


\begin{proof}
Without loss of generality, we assume that $x^0=0$ and $\Omega={\mathfrak T}(\phi,h)$, with $\phi\in{\cal C}^{1,{\cal D}}$. Further, the restrictions on $a^{ij}$ are preserved under coordinate transformations of the class $\mathcal{C}^{1,\mathcal{D}}$. Therefore, we can flatten $\partial\Omega$ in a neighborhood of $x^0$ and assume that $B_R \cap \{x_n>0\} \subset \Omega$ for some $R>0$.

For $0<r<R/2$ consider the point $x^r=(0,\dots,0,r)$ and the annulus $\pi=B_r(x^r)\setminus \overline{B}_{\frac r2}(x^r)\subset \Omega$.

The assumption $a^{ij}\in{\cal C}^{0,{\cal D}}(\Omega)$ gives
\begin{equation}\label{eq:Dini} 
|a^{ij}(x)-a^{ij}(y)|\leq \sigma (|x-y|),\quad x,y \in \overline{\pi},\qquad \sigma\in{\cal D}.
\end{equation}

Let $x^*$ be an arbitrary point in $\overline{\pi}$. Following \cite{FGi1957}, we define the barrier function ${\mathfrak V}$ and the auxiliary function $\Psi_{x^*}$ as solutions to the following boundary value problems:
\begin{equation*} 
\left\{ 
\begin{aligned}
{\mathfrak L}_0{\mathfrak V}&=0 \quad \text{in}\ \pi,\\
{\mathfrak V}&=1 \quad \text{on}\ \partial B_{\frac r2}(x^r),\\
{\mathfrak V}&=0 \quad \text{on}\ \partial B_r(x^r),
\end{aligned}
\right.
\qquad \quad
\left\{ 
\begin{aligned}
{\mathfrak L}_0^{x^*}\Psi_{x^*}&=0 \quad \text{in}\ \pi,\\
\Psi_{x^*}&=1 \quad \text{on}\ \partial B_{\frac r2}(x^r),\\
\Psi_{x^*}&=0 \quad \text{on}\ \partial B_r(x^r),
\end{aligned}
\right.
\end{equation*}
where ${\mathfrak L}_0^{x^*}$ is the operator with constant coefficients
$$
{\mathfrak L}_0^{x^*}\Psi_{x^*}:=-D_i(a^{ij}(x^*)D_j\Psi_{x^*}).
$$  

It is well known that $\Psi_{x^*} \in \mathcal{C}^\infty(\overline{\pi})$. Further, the existence of a (unique) weak solution ${\mathfrak V}$ follows from the general linear theory. Moreover, Lemma~3.2 in \cite{GrW1982} shows that ${\mathfrak V}\in {\cal C}^1(\overline{\pi})$, and for $y\in \overline{\pi}$ the following estimate holds:
\begin{equation}
\label{eq:3.11-GW82}
|D{\mathfrak V}(y)|\leq \frac{N_1(n,\nu,\sigma)}r.
\end{equation}

We set ${\mathfrak w}={\mathfrak V}-\Psi_{x^*}$ and notice that ${\mathfrak w}=0$ on $\partial \pi$. Therefore, ${\mathfrak w}$ admits the representation via the Green's function ${\cal G}_{x^*}$ of the operator ${\mathfrak L}_0^{x^*}$ in $\pi$:
\begin{align*}
{\mathfrak w}(x)=\int\limits_{\pi} {\cal G}_{x^*}(x,y)\,{\mathfrak L}_0^{x^*}{\mathfrak w}(y)\,dy 
\stackrel{(\star)}=
\int\limits_{\pi} {\cal G}_{x^*}(x,y) \left({\mathfrak L}_0^{x^*}{\mathfrak V}(y)-{\mathfrak L}_0 {\mathfrak V}(y)\right)\,dy, 
\end{align*}
(the equality $(\star)$ follows from relation ${\mathfrak L}_0^{x^*}\Psi_{x^*}={\mathfrak L}_0 {\mathfrak V}=0$).

Integrating by parts we have
\begin{equation} \label{eq:w-Green}
{\mathfrak w}(x)=\int\limits_{\pi}D_{y_i}{\cal G}_{x^*}(x,y)\left(a^{ij}(x^*)-a^{ij}(y)\right)D_j{\mathfrak V}(y)\,dy.
\end{equation}
Differentiating both parts of the equality (\ref{eq:w-Green}) with respect to $x_k$, we obtain
\begin{equation} \label{eq:Dw-Green}
\begin{gathered}
D_k{\mathfrak w}(x^*)=\int\limits_{\pi}D_{x_k}D_{y_i}{\cal G}_{x^*}(x^*,y)\left(a^{ij}(x^*)-a^{ij}(y)\right)D_j{\mathfrak V}(y)\,dy,\\
k=1,\dots,n.
\end{gathered}
\end{equation}

The derivatives of the Green's function ${\cal G}_{x^*}(x,y)$ can be estimated as follows (see, e.g., \cite[Theorem 3.3]{GrW1982}):
\begin{equation} \label{eq:DGreen}
|D_xD_y{\cal G}_{x^*}(x,y)|\leq \frac{N_2(n,\nu)}{|x-y|^n} \,, \qquad x,y \in 
\overline{\pi}.
\end{equation}

The substitution of (\ref{eq:3.11-GW82}), (\ref{eq:DGreen}) and (\ref{eq:Dini}) into (\ref{eq:Dw-Green}) gives
$$
|D{\mathfrak w}(x^*)|\leq \frac{N_1N_2}r \int\limits_{B_{2r}(x^*)}\frac {\sigma (|x^*-y|)}{|x^*-y|^n}\,dy,
$$
and we arrive at
\begin{equation} \label{eq:grad-z-psi}
|D{\mathfrak V}(x^*)-D\Psi_{x^*} (x^*)| \leq \frac{N_3(n,\nu,\sigma)}r \int\limits_0^{2r} \frac{\sigma (\tau)}{\tau}\,d\tau,\qquad x^*\in\overline{\pi}.
\end{equation}

Since the normal derivative lemma holds for operators with constant coefficients, we obtain for any strictly interior direction $\boldsymbol{\ell}$
$$
\partial_{\boldsymbol{\ell}}\Psi_0(0)\geq \frac{N_4(n, \nu,\boldsymbol{\ell})}r>0.
$$
By (\ref{eq:grad-z-psi}), for sufficiently small $r>0$ we have
$$
\partial_{\boldsymbol{\ell}}{\mathfrak V}(0) \geq \partial_{\boldsymbol{\ell}}\Psi_0(0)-|D{\mathfrak V}(0)-D\Psi_0(0)|\geq
\frac{N_4}r-\frac{N_3}r\int\limits_0^{2r} \frac{\sigma (\tau)}{\tau}\,d\tau\geq \frac{N_4}{2r}. 
$$

We fix such $r$. Since $u\not\equiv const$, the strong maximum principle yields $u-u(0)>0$ on $\partial B_{\frac r2}(x^r)$. Therefore, for sufficiently small $\varkappa>0$
$$
{\mathfrak L}_0(u-u(0)-\varkappa {\mathfrak V})\geq 0\quad \mbox{in} \ \ \pi;\qquad u-u(0)-\varkappa {\mathfrak V}\geq 0 \quad \mbox{on} \ \ \partial \pi.
$$
Now the weak maximum principle gives $u-u(0) \geq \varkappa {\mathfrak V}$ in $\pi$. Since at the origin this inequality becomes equality, we have
$$
\liminf\limits_{\varepsilon\to+0}\,\frac  {u(\varepsilon\boldsymbol{\ell})-u(0)}\varepsilon\,\geq\varkappa \partial_{\boldsymbol{\ell}}{\mathfrak V}(0),
$$
and the statement follows.
\end{proof}

Now we formulate a more general result established in \cite{AN2019}. The 
conditions on the lower-order coefficients for the validity of the normal 
derivative lemma obtained in this paper are the most precise at the moment.

\begin{thm} \label{thm:AN19}
Let the domain $\Omega\subset\R^n$ and the principal coefficients of the operator ${\mathfrak L}$ satisfy the conditions of Theorem \ref{thm:HO-div}. Let us also assume that
\begin{equation}\label{eq:weak-Kato}
\sup_{x\in\Omega}\int\limits_{\ \Omega\cap B_r(x)}\frac{|{\bf b}(y)|}{|x-y|^{n-1}}\cdot\frac{{\rm d}(y)}{{\rm d}(y)+|x-y|}\,dy\to 0 \quad\mbox{as} \quad r\to 0. 
\end{equation}
Let $u\in W^1_2(\Omega)$ be a weak supersolution of the equation ${\mathfrak L}u=0$, let $u\not\equiv const$ in $\Omega$, and let $b^iD_iu\in L_1(\Omega)$. Then the conclusion of Theorem \ref{thm:HO-div} holds true.
\end{thm}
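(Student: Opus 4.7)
The natural approach is to adapt the scheme of Theorem~\ref{thm:HO-div}, replacing the ${\mathfrak L}_0$-barrier by an ${\mathfrak L}$-barrier that is a small perturbation of it. Condition~(\ref{eq:weak-Kato}) is tailored to match the behavior of the derivative of the Green's function near the boundary: the extra weight $d(y)/(d(y)+|x-y|)$ is precisely the boundary decay factor appearing in that estimate. The plan is to exploit this compatibility in a fixed annular barrier region.

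First I would repeat the initial reduction of Theorem~\ref{thm:HO-div}: shift $x^0=0$, apply a ${\cal C}^{1,{\cal D}}$-diffeomorphism to flatten $\partial\Omega$ near the origin (the Dini regularity of $a^{ij}$ and condition~(\ref{eq:weak-Kato}) are preserved, since under such diffeomorphisms $d(y)$ changes by bounded multiplicative factors), and work in the annulus $\pi=B_r(x^r)\setminus\overline{B}_{r/2}(x^r)\subset\Omega$ with $x^r=(0,\dots,0,r)$ and $r$ small. Let $\widetilde{\mathfrak V}$ be the barrier of Theorem~\ref{thm:HO-div}, satisfying ${\mathfrak L}_0\widetilde{\mathfrak V}=0$ in $\pi$ with boundary values $1$ on $\partial B_{r/2}(x^r)$ and $0$ on $\partial B_r(x^r)$, together with $|D\widetilde{\mathfrak V}|\leq N_1/r$ and $\partial_{\boldsymbol\ell}\widetilde{\mathfrak V}(0)\geq N_4/(2r)$. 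Then construct the ${\mathfrak L}$-barrier ${\mathfrak V}$ as the weak solution of ${\mathfrak L}{\mathfrak V}=0$ in $\pi$ with the same boundary data; its existence, continuity up to $\partial\pi$, and comparison with ${\mathfrak L}$-sub/supersolutions follow from the weak maximum principle, which remains valid in $\pi$ under~(\ref{eq:weak-Kato}).

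The heart of the proof is the perturbation estimate
$$
|D({\mathfrak V}-\widetilde{\mathfrak V})(0)|=\frac{o(1)}{r}\quad \text{as}\ r\to 0.
$$
Setting $w={\mathfrak V}-\widetilde{\mathfrak V}$, one has ${\mathfrak L}w=-b^iD_i\widetilde{\mathfrak V}$ in $\pi$ with $w|_{\partial\pi}=0$. I would represent $w$ via the Green's function ${\cal G}$ of ${\mathfrak L}_0$ in $\pi$, handling the $b^iD_iw$ term by a contraction argument that closes thanks to the smallness in~(\ref{eq:weak-Kato}), then differentiate in $x$ and apply the mixed Green's function estimate
$$
|D_x{\cal G}(x,y)|\leq \frac{C}{|x-y|^{n-1}}\cdot\frac{d_\pi(y)}{d_\pi(y)+|x-y|},
$$
standard for divergence operators with Dini-continuous coefficients in a ${\cal C}^{1,{\cal D}}$ domain. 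Combined with $|D\widetilde{\mathfrak V}|\leq N_1/r$, the resulting integrand is precisely the one in~(\ref{eq:weak-Kato}), since $d_\pi(y)$ is comparable to $d(y)$ in the portion of $\pi$ approaching the origin, and the displayed bound follows at once.

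Thus $\partial_{\boldsymbol\ell}{\mathfrak V}(0)\geq N_4/(4r)>0$ for $r$ small, which I now fix. By the strong maximum principle $u-u(0)>0$ on $\partial B_{r/2}(x^r)$, and for small $\varkappa>0$ one has ${\mathfrak L}(u-u(0)-\varkappa{\mathfrak V})\geq 0$ in $\pi$ with non-negative boundary values; the hypothesis $b^iD_iu\in L_1(\Omega)$ is what legitimizes the integral identity for this difference. The weak maximum principle yields $u-u(0)\geq\varkappa{\mathfrak V}$ in $\pi$, and taking the directional derivative at the origin along $\boldsymbol\ell$ gives the desired positive $\liminf$. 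I expect the main obstacle to be the Green's function estimate with the boundary weight and the verification that the perturbation iteration closes under precisely~(\ref{eq:weak-Kato}): dropping the weight $d(y)/(d(y)+|x-y|)$ would force the stronger hypothesis $|{\bf b}|\in {\cal K}_{n,1}$, and it is the boundary decay of $|D_x{\cal G}|$ that permits the weaker assumption used in the statement.
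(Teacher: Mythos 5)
Your proof is correct and matches the route the paper signals: the paper itself does not reproduce the proof here (it defers to the reference), but the two remarks that follow the statement — identifying $\mathcal{K}_{n,1}$ locally, and, more tellingly, that condition~(\ref{eq:weak-Kato}) is exactly the uniform integrability of $|{\bf b}(y)|\,|D_x{\cal G}(x,y)|$ together with the Gr\"uter--Widman estimate
\[
|D_x{\cal G}(x,y)|\leq \frac{C}{|x-y|^{n-1}}\cdot\frac{{\rm d}(y)}{{\rm d}(y)+|x-y|}
\]
— point to precisely the perturbation you carry out. Your decomposition $w={\mathfrak V}-\widetilde{\mathfrak V}$, the Green's representation of $Dw$ in the annulus $\pi$, and the Picard/contraction closure driven by the $o(1)$ in~(\ref{eq:weak-Kato}) at scale $2r$ give exactly the needed bound $|Dw(0)|=o(1)/r$, after which the comparison argument from Theorem~\ref{thm:HO-div} finishes as you describe.

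Two small points worth tightening. First, the boundary-weight comparison is cleaner than ``comparable near the origin'': since $\pi\subset\Omega$ one has $d_\pi(y)\leq d(y)$ everywhere in $\pi$, and $t\mapsto t/(t+s)$ is increasing, so the Green's-function weight in $\pi$ is dominated pointwise by the weight appearing in~(\ref{eq:weak-Kato}); no local comparability claim is needed. Second, you should note (or at least be aware) that the uniformity in $r$ of the Gr\"uter--Widman constant for the shrinking annuli is supplied by dilation invariance of the estimate, and that the strong maximum principle you invoke on $\partial B_{r/2}(x^r)$ is not Theorem~\ref{thm:SMP-div} (which needs $b^i\in L_n$) but the Kato-class version discussed in the paper (Kurata; Zhang with Dini principal part), which~(\ref{eq:weak-Kato}) does supply since it implies $b^i\in\mathcal{K}_{n,1,\mathrm{loc}}$.
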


\begin{rem}
In any subdomain of $\Omega'$ such that $\overline{\Omega}{}'\subset\Omega$, the condition (\ref{eq:weak-Kato}) coincides with $b^i\in{\cal K}_{n,1}$, cf. (\ref{eq:Kato-beta}). Therefore, (\ref{eq:weak-Kato}) implies $b^i\in{\cal K}_{n,1,{\rm loc}}$. On the other hand, it is shown in \cite{AN2019} that the assumptions on $b^i$ imposed in Theorem \ref{thm:Na12} imply (\ref{eq:weak-Kato}).
\end{rem}

\begin{rem}
The normal derivative lemma for divergence type operators is directly related to the properties of the Green's functions for these operators.

The Green's function for a uniformly elliptic operator ${\mathfrak L}_0$ with measurable coefficients was first constructed in the seminal paper \cite{LtStW1963E}. Among other results of this work, we notice the estimate\footnote{Later this estimate was extended to more general operators of the form (\ref{eq:div_operator}). Recent results in this area, as well as a historical survey, can be found in \cite{AlkSur2022}.} 
$$
\frac {C^{-1}}{|x-y|^{n-2}}\leq{\cal G}(x,y)\leq \frac C{|x-y|^{n-2}},
$$
which holds for the Green's function in the whole $\R^n$, $n\geq3$ (here $C$ depends only on $n$ and $\nu$).

A very important role belongs also to the article \cite{GrW1982}, where, among other results, the following estimates were proved for the Green's function of a uniformly elliptic operator ${\mathfrak L}_0$ with coefficients satisfying the Dini condition, in the domain $\Omega\subset\R^n$, $n\geq3$, satisfying the exterior ball condition:
$$
\aligned
{\cal G}(x,y)\leq &\ \frac C{|x-y|^{n-2}}\cdot\frac{{\rm d}(x)}{{\rm d}(x)+|x-y|}\cdot\frac{{\rm d}(y)}{{\rm d}(y)+|x-y|};\\
|D_x{\cal G}(x,y)|\leq &\ \frac C{|x-y|^{n-1}}\cdot\frac{{\rm d}(y)}{{\rm d}(y)+|x-y|};\\
|D_xD_y{\cal G}(x,y)|\leq &\ \frac C{|x-y|^n}
\endaligned
$$
(the constant $C$ depends on $n$, $\nu$, on the function $\sigma$ in the Dini condition for the coefficients and on the domain $\Omega$).
\medskip

Thus, the assumption (\ref{eq:weak-Kato}), roughly speaking, means that the function $|{\bf b}(y)|\cdot |D_x{\cal G}(x,y)|$ is integrable uniformly with respect to $x$.
\end{rem}

\section{Some generalizations and applications}\label{sec:appl}

As already mentioned in the Introduction, in this Section we provide a brief presentation of some subjects that either generalize the main assertions of our survey or directly rely on them.

\subsection{Symmetry of solutions to nonlinear boundary value problems}

We start with the celebrated {\bf moving plane method}. It was first applied by A.D. Aleksandrov \cite{Al1958cE} to the problem of characterizing a sphere by the property of constancy of its mean curvature (or some other functions of the principal curvatures)\footnote{The problem statement and the history of the problem are given in \cite{Al1956E}; see also \cite{Al1958bE}. For generalizations of this result see, e.g., \cite{Lyy1997}--\nocite{LNi2006}\cite{LNi2006a}.}.
The method was later rediscovered by J. Serrin \cite{S1971}
when solving the following overdetermined problem in the unknown ${\cal C}^2$-smooth domain: 
$$
-\Delta u=1 \quad \mbox{in} \quad\Omega,\qquad u\big|_{\partial\Omega}=0, \qquad \partial_{\bf n}u\big|_{\partial\Omega}=const.
$$
It is shown in \cite{S1971} that such a problem is solvable only if $\Omega$ is a ball.

The method owes its popularity to the article \cite{GNN1979}, which considered the problem
\begin{equation}
\label{eq:GNN}
-\Delta u=f(u) \quad \mbox{in} \quad B_R,\qquad u\big|_{\partial B_R}=0
\end{equation}
and its generalizations. Let us formulate the basic result of this work.

\begin{thm}\label{thm:GNN}
Let $f\in{\cal C}^1_{\rm loc}(\R_+)$, and let $u\in{\cal C}^2(\overline{B}_R)$ be a positive in $B_R$ solution to the problem (\ref{eq:GNN}). Then  $u=u(r)$ (the function $u$ is radially symmetric) and $u'(r)<0$ for $0<r<R$.
\end{thm}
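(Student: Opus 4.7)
The plan is to prove Theorem~\ref{thm:GNN} via the Aleksandrov--Serrin \emph{moving plane method}, whose main ingredients are the strong maximum principle and the normal derivative lemma (Theorem~\ref{thm:HO}), plus the ABP-based weak maximum principle for small caps (Theorem~\ref{thm:pmAB-3} and the Corollary opening~\S\,\ref{ss:2.4}). By rotational invariance of both the equation and the ball, it suffices to prove symmetry with respect to an arbitrary hyperplane through the origin; radial symmetry then follows. Fix the direction $e_1$ and, for $\lambda\in(0,R)$, introduce the hyperplane $T_\lambda=\{x_1=\lambda\}$, the cap $\Sigma_\lambda=B_R\cap\{x_1>\lambda\}$, the reflection $x^\lambda=(2\lambda-x_1,x_2,\dots,x_n)$, and $w_\lambda(x)=u(x^\lambda)-u(x)$. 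Since both $u$ and $x\mapsto u(x^\lambda)$ solve $-\Delta v=f(v)$ and $f\in{\cal C}^1_{\rm loc}$, the function $w_\lambda$ satisfies the linear equation
$$
-\Delta w_\lambda+c_\lambda(x)w_\lambda=0\quad\mbox{in}\ \Sigma_\lambda,\qquad
c_\lambda(x):=-\int\limits_0^1 f'\bigl(tu(x^\lambda)+(1-t)u(x)\bigr)\,dt\in L_\infty(\Sigma_\lambda),
$$
with $w_\lambda=0$ on $T_\lambda\cap\overline{B_R}$ and $w_\lambda>0$ on $\partial B_R\cap\overline{\Sigma_\lambda}$ (where $u=0$ while $u(x^\lambda)>0$ for $x^\lambda$ an interior point of $B_R$).

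First I would carry out the \emph{small-cap start}: for $\lambda$ close to $R$ the cap $\Sigma_\lambda$ has arbitrarily small measure, and since $c_\lambda$ is bounded uniformly in $\lambda$, the Corollary of~\S\,\ref{ss:2.4} yields the weak maximum principle for the operator $-\Delta+c_\lambda$ on $\Sigma_\lambda$ despite the possibly bad sign of $c_\lambda$; hence $w_\lambda\geq 0$ in $\Sigma_\lambda$. Consequently
$$
\Lambda:=\bigl\{\lambda\in(0,R)\ \big|\ w_\mu\geq 0\ \mbox{in}\ \Sigma_\mu\ \mbox{for every}\ \mu\in[\lambda,R)\bigr\}
$$
is non-empty; set $\lambda_0=\inf\Lambda\in[0,R)$. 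By continuity $w_{\lambda_0}\geq 0$ in $\Sigma_{\lambda_0}$, and since $w_{\lambda_0}$ is strictly positive on $\partial B_R\cap\overline{\Sigma_{\lambda_0}}$ it is not identically zero. Item~A1 of Theorem~\ref{thm:HO} applied to $w_{\lambda_0}$ (zero-minimum case, which requires no sign restriction on $c_\lambda$) then forces $w_{\lambda_0}>0$ throughout $\Sigma_{\lambda_0}$.

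The decisive \emph{continuation step} is the claim $\lambda_0=0$. Suppose, for contradiction, that $\lambda_0>0$. The key input is item~B1 of Theorem~\ref{thm:HO} applied to $w_{\lambda_0}$ at each point of the flat boundary piece $T_{\lambda_0}\cap B_R$ (which trivially satisfies the interior ball condition): it yields $\partial_{e_1}w_{\lambda_0}>0$ uniformly on compact subsets, equivalently $\partial_{x_1}u<0$ on $T_{\lambda_0}\cap B_R$. By ${\cal C}^2$-continuity, $\partial_{x_1}u<0$ in a two-sided neighbourhood ${\cal N}$ of $T_{\lambda_0}\cap B_R$ in $B_R$. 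For $\lambda<\lambda_0$ close to $\lambda_0$: on the thin slab $\Sigma_\lambda\setminus\Sigma_{\lambda_0}\subset{\cal N}$ the mean value theorem forces $w_\lambda>0$ (the reflection $x^\lambda$ lies in ${\cal N}$ with strictly smaller $x_1$-coordinate); on a compact $K\Subset\Sigma_{\lambda_0}$ where $w_{\lambda_0}\geq\delta>0$, continuity of $\lambda\mapsto w_\lambda$ gives $w_\lambda\geq\delta/2$ on $K$; on the thin residual set $\Sigma_{\lambda_0}\setminus K$, $w_\lambda$ is non-negative on the boundary (by the preceding two observations together with the original boundary conditions) and the Corollary of~\S\,\ref{ss:2.4} applies provided $|\Sigma_{\lambda_0}\setminus K|$ is chosen small enough. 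Combining, $w_\lambda\geq 0$ on the whole of $\Sigma_\lambda$, contradicting the minimality of $\lambda_0$.

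Therefore $\lambda_0=0$, so $u(x^0)\geq u(x)$ for $x\in\Sigma_0$; running the argument from the opposite direction yields the reverse inequality and symmetry with respect to $\{x_1=0\}$. Rotating $e_1$ through all unit vectors gives $u=u(r)$. Monotonicity is a by-product: applied on each cap $\Sigma_\lambda$ with $0<\lambda<R$, the argument above yields $w_\lambda>0$ in $\Sigma_\lambda$, and item~B1 of Theorem~\ref{thm:HO} then gives $\partial_{x_1}u<0$ on $T_\lambda\cap B_R$; since every $x\in B_R\cap\{x_1>0\}$ lies on such a plane, $\partial_{x_1}u<0$ in $B_R\cap\{x_1>0\}$, which radial symmetry promotes to $u'(r)<0$ for $0<r<R$. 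The principal obstacle in this scheme is the continuation step: without the boundary point principle one cannot exclude a degenerate vanishing of $w_{\lambda_0}$ at $T_{\lambda_0}$, and the bootstrap $\lambda_0\to\lambda_0-\varepsilon$ collapses. It is precisely here that the normal derivative lemma plays its essential role.
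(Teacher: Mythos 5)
Your proof follows the same moving plane strategy as the paper's sketch (which in fact cites \cite{GNN1979} for the details), with the same three ingredients: the strong maximum principle~\textit{A1}, the boundary point principle~\textit{B1}, and a version of the maximum principle for small domains coming from the Aleksandrov--Bakelman estimate. Your small-cap start is more careful than the paper's (``the graph of the reflected function lies above the original one''), and your use of the ABP corollary of~\S\,\ref{ss:2.4} in the spirit of Berestycki--Nirenberg is exactly what makes this part rigorous without invoking the sign of $\partial_{\bf n}u$ on $\partial B_R$.

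There is, however, a genuine gap in your continuation step. You claim $\Sigma_\lambda\setminus\Sigma_{\lambda_0}\subset{\cal N}$, where ${\cal N}$ is the open set $\{\,x\in B_R : \partial_{x_1}u(x)<0\,\}$. But ${\cal N}$ is guaranteed only to contain $T_{\lambda_0}\cap B_R$, an \emph{open} disk in the hyperplane; at the corner circle $T_{\lambda_0}\cap\partial B_R$ you have no control on $\partial_{x_1}u$ (this is precisely the difficulty the paper flags when $f(0)<0$, handled in \cite{GNN1979} by Serrin's corner lemma). Consequently ${\cal N}$ may thin out as one approaches $\partial B_R$, and for no $\lambda<\lambda_0$ need the slab $\{\lambda<x_1<\lambda_0\}\cap B_R$ (nor the reflected points $x^\lambda$ for $x\in T_{\lambda_0}$) be entirely contained in ${\cal N}$. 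As written, the nonnegativity of $w_\lambda$ on the $T_{\lambda_0}$-portion of $\partial(\Sigma_{\lambda_0}\setminus K)$ is not justified near the corner, so the ABP step on $\Sigma_{\lambda_0}\setminus K$ does not close.

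The repair is standard and keeps your tools: drop the slab/${\cal N}$ argument altogether and apply the ABP corollary directly to $\Sigma_\lambda\setminus K$ (not $\Sigma_{\lambda_0}\setminus K$). Choosing $K\Subset\Sigma_{\lambda_0}$ compact with $|\Sigma_{\lambda_0}\setminus K|$ small and then $\lambda<\lambda_0$ close enough that $|\Sigma_\lambda\setminus K|$ is still small, the boundary $\partial(\Sigma_\lambda\setminus K)$ consists only of $\partial K$ (where $w_\lambda\geq\delta/2$ by uniform continuity), pieces of $\partial B_R$ (where $w_\lambda\geq0$ because $u$ vanishes there and is nonnegative inside), and pieces of $T_\lambda$ (where $w_\lambda=0$). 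All three give $w_\lambda\geq0$ on the boundary without any reference to ${\cal N}$, and the ABP corollary yields $w_\lambda\geq0$ in $\Sigma_\lambda\setminus K$, hence in all of $\Sigma_\lambda$. The role of the normal derivative lemma is then confined to producing $w_{\lambda_0}>0$ and, via \textit{B1}, the monotonicity $\partial_{x_1}u<0$ on $T_\lambda\cap B_R$, exactly as you use it in your final paragraph. With this amendment the argument is correct and, if anything, tighter than the paper's sketch because it dispenses with the corner lemma from \cite{GNN1979}.
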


Let us sketch the proof of Theorem \ref{thm:GNN}.
Obviously, it suffices to show that $u$ is an even function of $x_n$ and $D_nu(x)<0$ for $x_n>0$.

For $0<\lambda<R$, denote by $\Sigma_\lambda$ the segment cut off from the ball by the plane $\Pi_\lambda=\{x\,\big|\,x_n=\lambda\}$. For $x\in\overline{\Sigma}_\lambda$ we denote by $\widehat x_\lambda=(x',2\lambda-x_n)$ the point symmetric to $x$ with respect to $\Pi_\lambda$.

Consider the function $v_\lambda(x)=u(\widehat x_\lambda)-u(x)$ in $\overline{\Sigma}_\lambda$. It satisfies the equation
$$
-\Delta v_\lambda + c(x)v_\lambda=0;\qquad c(x)=\frac {f(u(\widehat x_\lambda))-f(u(x))}{u(x)-u(\widehat x_\lambda)}\in L_\infty(\Sigma_\lambda).
$$
For $\lambda$ sufficiently close to 1, the function $v_\lambda$ is positive in $\Sigma_\lambda$ (the graph of the ``reflected'' function lies above the original one) and attains {\bf zero} minimum at $\Pi_\lambda$. By the normal derivative lemma (item {\it B1} of Theorem \ref{thm:HO}), we have $\partial_{\bf n} v_\lambda(x)=2D_nu(x)<0$ on $\Pi_\lambda$.\footnote{Recall that the sign of the coefficient $c(x)$ is not important here. Note also that if $f(0)<0$, then $D_nu$ can vanish at $x\in\Pi_\lambda\cap\partial B_R$, but it is shown in \cite{GNN1979} that in this case $D_nD_nu(x)>0$; this is sufficient for the subsequent argument.} Therefore, one can slightly reduce $\lambda$ (shift the plane $\Pi_\lambda$ to the center of the ball) such that the inequality $v_\lambda>0$ in $\Sigma_\lambda$ will still be satisfied.

Denote by $\lambda_0$ the greatest lower boundary of those $\lambda$ for which $v_\lambda>0$ in $\Sigma_\lambda$. If we assume that $\lambda_0>0$ then $v_{\lambda_0}>0$ on the ``circular'' part of $\partial\Sigma_{\lambda_0}$. By the strong maximum principle (item {\it A1} of Theorem \ref{thm:HO}), $v_{\lambda_0}>0$ in $\Sigma_{\lambda_0}$. But then we can repeat the previous argument and obtain that the plane $\Pi_{\lambda_0}$ can be shifted a little more towards the center, which is impossible. Thus, $\lambda_0=0$, and $v_0\equiv0$, i.e. $u(x',-x_n)\equiv u(x)$. The theorem is proved.
\medskip

As pointed out in \cite{GNN1979}, if $f(0)\geq0$ then the a priori positivity of $u$ can be replaced by the assumption $u\geq0$, $u\not\equiv0$. It is also obvious that the condition $f\in{\cal C}^1_{\rm loc}(\R_+)$ can be replaced by the local Lipschitz condition. The following example given in \cite{GNN1979} shows that the H\"older condition on $f$ is, in general, not sufficient.
\medskip

Let $p>2$, and let $u(x)=(1-|x-x^0|^2)_+^p$. Direct calculation shows that $u$ is a solution to the problem (\ref{eq:GNN}) for $R>|x^0|+1$ if we put\footnote{There is a typo in \cite{GNN1979} in this formula.}
$$
f(u)=2p(n-2+2p)u^{1-\frac 1p}-4p(p-1)u^{1-\frac 2p} \in{\cal C}^{0,1 -\frac 2p}_{\rm loc}(\R_+).
$$
The H\"older exponent can be made arbitrarily close to $1$ by choosing $p$, but the assertion of the theorem does not hold\footnote{Nevertheless, if $f>0$ then the Lipschitz condition on $f$ can be weakened, see, e.g., \cite{Ls1981}
and \cite{GKPR1998}.}.
\medskip

The article \cite{GNN1979} (as well as \cite{GNN1981}, where equations of the form (\ref{eq:GNN}) were considered in the whole space) gave rise to a huge number of improvements and generalizations. Among them, we lay emphasis on the paper by H.~Berestycki and L. Nirenberg \cite{BeNi1991}. There, by using the Aleksandrov--Bakelman max\-i\-mum principle, the results of \cite{GNN1979} are extended to strong solutions for a rather wide class of uniformly elliptic nonlinear equations. Applications of the moving plane method to the $p$-Laplacian type degenerate operators can be found in \cite{DPR2003}, \cite{DSc2004}, \cite{EspSc2020}, \cite{OSV2020}, see also references therein.\medskip

A number of papers use the {\bf moving sphere method}, that is a combination of the moving plane method with conformal transformations (see, e.g., \cite{LiZh1995} and \cite{Pd1997}). We also mention the paper \cite{MKR2007}, where discrete analogues of the results in \cite{GNN1979} were obtained.
\medskip

Other applications of the strong maximum principle and the normal derivative lemma to the proof of symmetry properties in geometric problems can be found, for example, in \cite{Ae1960}, \cite{S1969}, \cite{PaSt1973}, \cite{TW1983}, \cite{ Ol1984}, \cite{Mu2011} (see also \cite{Sp1981}). The applications of the Aleksandrov--Bakelman maximum principle and its variants to the study of symmetry properties for solutions to nonlinear boundary value problems and to the proof of isoperimetric inequalities are discussed in \cite{Cb2000}, \cite{Cb2008}, \cite{CbROS2016}, see also the survey \cite{Cb2002}.

\subsection{Phragm\'en--Lindel\"of type theorems}\label{ss:4.2}

The Phragm\'en--Lindel\"of principle in its original formulation \cite{PhL1908} describes the behavior at infinity of a function analytic in an unbounded domain.

For solutions of uniformly elliptic (non-divergence type) equations of general form, such theorems were first proved by E.M. Landis \cite{La1959E}--\cite{La1963E} (brief reports were previously published in \cite{La1956E}--\nocite{La1956bE}\cite{La1956aE}). The principal co\-ef\-fi\-cients of the operator in \cite{La1963E} satisfy the Dini condition, and the behavior of the domain at infinity is described in terms of measure.

More exact Phragm\'en--Lindel\"of type theorems can be obtained if the domains are described in terms of capacity. The first results of this kind were established in \cite{Mz1967E}, \cite{Bl1970E} for divergence type equations with measurable principal co\-ef\-fi\-cients and in \cite{Bl1965E}, \cite{Bl1970E} for non-divergence type equations under the H\"older condition on the principal coefficients.

Finally, the crucial step was taken by E.M. Landis \cite{La1968E} (see also \cite[Ch.~1]{La1971E}), who, using the concept of $s$-capacity introduced by himself, proved the Phragm\'en--Lindel\"of type theorems for non-divergence type equations with measurable principal coefficients.

We present, for instance, one of the results from \cite[Ch. 1, \S\,6]{La1971E}.

\begin{thm}
\label{thm:Fr-Lin}
Let $\Omega$ be an unbounded domain lying inside the infinite layer
$$
\Omega\subset \{x\in\R^n \,\big|\, |x_n|<h\}.
$$
Let an operator ${\cal L}_0$ satisfy the condition (\ref{eq:uniell}), and let $u\in {\cal C}^2(\Omega)$ be a classical subsolution\footnote{Using the Aleksandrov--Bakelman maximum principle, this result can be adapted for strong subsolutions $u\in W^2_{n,{\rm loc}}(\Omega)$.} of the equation ${\cal L}_0u=0$ satisfying the condition $u |_{\partial\Omega}\leq0$.

If $u(x)>0$ at some point $x\in\Omega$, then
$$
\liminf\limits_{R\to\infty}\frac {\max\limits_{|x|=R} u(x)}{\exp\big(\frac Ch R\big)}>0,
$$
where the constant $C>0$ depends only on $n$ and $\nu$.
\end{thm}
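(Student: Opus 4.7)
After rescaling $x \mapsto x/h$ we may assume $h=1$; condition (\ref{eq:uniell}) is preserved with the same $\nu$. The plan is to iterate Landis's growth lemma along a chain of balls, exploiting that every ball of radius $2$ centred inside the unit layer meets the complement in a set of absolute positive measure.

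Recall the growth lemma (\cite{La1968aE}, \cite[Ch.~1]{La1971E}), already invoked in the proof of the Harnack inequality: there exist $\mu_0=\mu_0(n)>0$ and $\eta=\eta(n,\nu)\in(0,1)$ such that for every $y\in\R^n$ and every open $G\subset B_2(y)$ with $|B_2(y)\setminus G|\geq\mu_0|B_2|$, any $v\in{\cal C}^2(G)\cap{\cal C}(\overline G)$ satisfying ${\cal L}_0 v\leq 0$ in $G$ and $v\leq 0$ on $\partial G\cap B_2(y)$ obeys
$$
\sup_{G\cap B_1(y)}v \,\leq\, \eta\,\sup_{G} v_+.
$$
For $y\in\Omega\subset\{|x_n|<1\}$ the caps $B_2(y)\cap\{|x_n|\geq 1\}$ have $n$-dimensional measure bounded below by an absolute $\mu_0(n)>0$, so the density hypothesis is automatic.

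Fix $x_0\in\Omega$ with $u(x_0)>0$ and apply the growth lemma to $G=\Omega\cap B_2(x_0)$: since $u\leq 0$ on $\partial\Omega\cap B_2(x_0)$ and $u(x_0)>0$, we get $0<u(x_0)\leq\sup_{\Omega\cap B_1(x_0)}u\leq\eta\sup_{\Omega\cap B_2(x_0)}u$. Hence the compact set $\overline{\Omega\cap B_2(x_0)}$ contains a point $x_1$ with $u(x_1)\geq\eta^{-1}u(x_0)>0$; as $u\leq 0$ on $\partial\Omega$, necessarily $x_1\in\Omega$, with $|x_1-x_0|\leq 2$. Iterating yields a sequence $\{x_k\}\subset\Omega$ satisfying $|x_k-x_{k-1}|\leq 2$ and $u(x_k)\geq\eta^{-k}u(x_0)$. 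For fixed $R$ set $k_R=\lfloor(R-|x_0|)/2\rfloor$, so $|x_{k_R}|\leq R$, and the weak maximum principle on $\Omega\cap B_R$ (where $u\leq 0$ on $\partial\Omega\cap B_R$) gives
$$
\max_{x\in\Omega,\,|x|=R}u(x)\,\geq\,u(x_{k_R})\,\geq\,u(x_0)\,\eta^{-k_R}.
$$
Undoing the rescaling restores $h$ and delivers the bound with $C=-\tfrac12\ln\eta>0$, a constant depending only on $n$ and $\nu$.

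The main obstacle is the growth lemma itself: its proof for coefficients that are merely bounded and measurable relies either on a special barrier in $B_2(y)\setminus G$ à la Landis, or, in the Krylov--Safonov spirit, on the Aleksandrov--Bakelman--Pucci estimate of \S\,\ref{ss:2.3} applied to a suitable translate of the subsolution. Once that ingredient is available, the chaining argument above transforms the geometric thinness of the slab directly into the claimed exponential lower bound.
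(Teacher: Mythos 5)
Your proof is correct and reconstructs exactly the Landis chaining argument that the paper (which gives no proof of its own, only a citation to \cite[Ch.~1, \S\,6]{La1971E}) attributes to Landis: iterate the growth lemma through a chain of points $x_k\in\Omega$ gaining a factor $\eta^{-1}$ per step of length $2h$, then convert the geometric gain into the exponential lower bound via the weak maximum principle on $\Omega\cap B_R$. Two small remarks. First, the growth lemma is normally phrased as ``for every $\mu_0>0$ there is $\eta=\eta(n,\nu,\mu_0)<1$'' rather than ``there exist $\mu_0$ and $\eta$''; since $\mu_0$ is then pinned down by the slab geometry (at least one of the two caps $B_2(y)\cap\{\pm x_n\ge 1\}$ has height $\ge 1$, their heights summing to $2$, so the deficit is an absolute fraction of $|B_2|$ depending only on $n$), this is cosmetic and $C=-\tfrac12\ln\eta$ indeed depends only on $n$ and $\nu$. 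Second, for the last inequality it is worth noting explicitly that the supremum of $u$ over $\overline{\Omega\cap B_R}$ is attained on $\Omega\cap\partial B_R$ because $u\le 0$ on $\partial\Omega$ while $u(x_{k_R})>0$, which is the content of the weak maximum principle step you invoke.
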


We also note the work of V.G. Maz'ya \cite{Mz1970E}, where related questions were studied for quasilinear $p$-Laplacian type operators.

In the case where the non-tangential derivative is given on a part of $\partial\Omega$, the Phragm\'en--Lindel\"of type theorems were proved in \cite{LaIb1995E}, \cite{IbLa1996E}, \cite{IbLa1997} for di\-ver\-gence type equations and in \cite{CIN2018} (see also \cite{IbNa2017E}) for non-divergence type equations. Note that in the last two papers, a weakened form of the normal derivative lemma \cite{Nd1983E} was used.\medskip

The above results are linked to the {\bf Landis conjecture} that is the problem of the fastest possible rate of convergence to zero for a nontrivial solution to a uniformly elliptic equation
in the domain $\Omega=\R^n\setminus B_R$. It was first formulated in the survey \cite{KoLa1988E} for the equation
\begin{equation}\label{eq:Landconj}
    -\Delta u +c(x)u=0
\end{equation} 
with $c\in L_\infty(\Omega)$ (in this case, the expected answer is exponential 
decay: if $|u(x)|=O(\exp(-N|x|))$ as $|x|\to\infty$ for any $N>0$, then 
$u\equiv0$). This problem has not been completely solved even for the simplest 
equation (\ref{eq:Landconj}). Recent results in this area, as well as a 
historical survey, can be found in \cite{SSu2021} (see also \cite{LMNN2020}).

\subsection{Boundary Harnack inequality}\label{ss:4.3}

If the normal derivative lemma does not hold, the following statement can be considered as its weaker version:
\medskip

\noindent {\bf Boundary Harnack inequality.}
\textit{Let $0\in\Omega$, and let ${\mathbb L}$ be an elliptic operator in $\Omega$. If $u_1$ and $u_2$ are positive solutions of the equation ${\mathbb L}u=0$ in $\Omega$ satisfying the condition $u_1|_{\partial\Omega\cap B_R}=u_2|_{\partial\Omega\cap B_R}=0$, then the inequality
\begin{equation}
C^{-1}\,\frac {u_1(0)}{u_2(0)}\leq\frac {u_1(x)}{u_2(x)} \leq C\,\frac {u_1(0)}{u_2(0)}
\label{eq:BoundHarnack}
\end{equation}
holds true in the subdomain $\Omega\cap B_{R/2}$, where $C$ is a constant independent on $u_1$ and $u_2$.
}

\begin{rem}
\label{rem:BoundHarnack}
If, for example, $\Omega$ is a ${\cal C}^{1, {\cal D}}$-smooth domain, and ${\cal L}$ is a uni\-form\-ly elliptic operator of the form (\ref{eq:nondiv_operator}) with bounded coefficients, then (\ref{eq:BoundHarnack}) easily follows from the normal derivative lemma, the gradient estimate for solutions on $\partial\Omega$, and the usual Harnack inequality.
\end{rem}

\begin{rem}
\label{rem:Krylov}
In the important particular case of flat boundary $x_n=0$ and an operator ${\cal L}_0$, where $u_2(x)=x_n$ can be taken, the boundary Harnack inequality was first obtained by N.V. Krylov \cite{Kr1983E} in order to obtain boundary estimates in ${\cal C}^{2, \alpha}$ for solutions of {\bf\textit{nonlinear}} equations.
\end{rem}

To describe the results of this subsection, we need new classes of domains:
\begin{itemize}
\item Nontangentially accessible domains (NTA domains);
\item Uniform domains;
\item Domains satisfying the $\lambda$-John condition, $\lambda\geq1$; when $\lambda=1$ just say ``John domains'';
\item Twisted H\"older domains (THD domains), with clarification ``of order $\alpha\in(0,1]$'' (THD-$\alpha$) if necessary.
\end{itemize}
The exact definitions of these classes can be found in the corresponding works listed in Table~\ref{tab1}. For the reader's convenience, we present only the relations between them (see, for example, \cite{KiSa2011aE})\footnote{The relation ($\bigtriangleup$) is not stated explicitly in \cite{KiSa2011aE} but follows from Remark 2.5 in this paper.}:
$$
\aligned
{\cal C}^{0,1}\subset\mbox{NTA}\subset\mbox{Uniform}\subset\mbox{John}= &\ \mbox{THD-$1$};\\
{\cal C}^{0,\alpha}\subset \tfrac 1\alpha\mbox{-John} \stackrel{(\bigtriangleup)}{=} &\ \mbox{THD-$\alpha$}.
\endaligned
$$

In Table~\ref{tab1} it is assumed by default that the principal coefficients of the operators are measurable and satisfy the condition (\ref{eq:uniell}).

\begin{table}[ht]    
\centering
\begin{tabular} {|l|c|c|c|c|c|c|}
\hline \hfil Operator & ${\cal C}^{0,1}\rule[-2mm]{0mm}{6mm}$ & NTA & Unif. & John & ${\cal C}^{0,\alpha}$ & THD\footnotemark  \\
\hline $-\Delta\rule[-2mm]{0mm}{6mm}$ & \cite{Dh1977}\footnotemark[74]  & \cite{JK1982}  & \cite{Aw2001}  &   &   & \\
\hline $\hphantom{-}{\cal L}+c(x)\rule[-2mm]{0mm}{6mm}$ \footnotemark[75] & \cite{An1978}\hphantom{\footnotemark[75a]}  &   &   &   &   & \\
\hline $\hphantom{-}{\mathfrak L}_0\rule[-2mm]{0mm}{6mm}$ & \cite{CFMS1981}\hphantom{\footnotemark[75a]}  &   &   &   & \cite{BaBaBu1991}\footnotemark[76] & \cite{BaBu1991} \\
\hline $-\Delta+b^i(x)D_i\rule[-2mm]{0mm}{6mm}$ \footnotemark[77] & \cite{CrZh1987}\hphantom{\footnotemark[77]}  &   &   &   &   & \\
\hline $\hphantom{-}{\cal L}_0\rule[-2mm]{0mm}{6mm}$ & \cite{FGMMS1988}\footnotemark[78]  &   &   &   &   & \\
\hline $\hphantom{-}{\mathfrak L}_0+c(x)$, \ $c\in {\cal K}_{n,2}\rule[-2mm]{0mm}{6mm}$ & \cite{CrFZh1988}\footnotemark[79]  &   &   &   &   & \\
\hline $\hphantom{-}\widehat{\mathfrak L}\rule[-1.5mm]{0mm}{6mm}$ & \cite{DeCV1996}\footnotemark[80]  &   &   &   &   & \\
\hline $\hphantom{-}{\cal L}$,\qquad $b^i\in L_\infty(\Omega)\rule[-2mm]{0mm}{6mm}$ &   &   &   &   & \cite{BaBu1994}\footnotemark[81]  & \\
\hline $\hphantom{-}{\cal L}$,\qquad $b^i\in L_n(\Omega)\rule[-2mm]{0mm}{6mm}$ & \cite{Sa2010}\hphantom{\footnotemark[81]}  &   &   & \cite{KiSa2014} &   & \cite{KiSa2011aE} \\
\hline\end{tabular}
    \caption{Boundary Harnack inequality in various classes of domains}
    \label{tab1}
\end{table}
\footnotetext{Results are obtained for $\alpha>\frac 12$; counterexamples are constructed in \cite{BaBu1991} for $\alpha<\frac 12$ and in \cite{KiSa2011E} for $\alpha=\frac 12$.}
\addtocounter{footnote}{1}
\footnotetext{See also \cite{Wu1978}.}
\addtocounter{footnote}{1}
\footnotetext{The coefficients satisfy the H\"older condition.}
\addtocounter{footnote}{1}
\footnotetext{See also \cite{Fe1998}.}
\addtocounter{footnote}{1}
\footnotetext{For the assumptions on the coefficients $b^i$, see Footnote \ref{foot:52}.}
\addtocounter{footnote}{1}
\footnotetext{See also \cite{Bau1984}; a somewhat more general condition on the domain is considered in \cite{Aw2008}.}
\addtocounter{footnote}{1}
\footnotetext{See Footnote \ref{foot:51}.}
\addtocounter{footnote}{1}
\footnotetext{The principal coefficients satisfy the assumptions (\ref{eq:ell-x}), ${\cal V}(x)\leq N\cdot\nu(x)$ and (\ref{eq:Mc2}), and the lower-order ones are subject to condition (\ref{eq:DECV}). }
\addtocounter{footnote}{1}
\footnotetext{The result is obtained for $\alpha>\frac 12$, while for $\alpha<\frac 12$ a counterexample is constructed. If $\partial\Omega$ additionally satisfies the condition $(A)$ introduced by O.A. Ladyzhenskaya and N.N. Uraltseva (see, for instance, \cite{LU1986E}), then the result is obtained for all $\alpha>0$.}

In the recent papers \cite{DeSS2020}, \cite{DeSS2022},
a unified approach to the proof of the boundary Harnack inequality for divergence and non-divergence types op\-er\-a\-tors is demonstrated\footnote{Earlier, similar ideas appeared in the works of M.V. Safonov, see \cite{FrSa2001}, \cite{Sa2010}, \cite{KiSa2011aE}.}.

A variation of the boundary Harnack inequality for supersolutions and ``almost supersolutions'' of the equation ${\mathfrak L}u+cu=0$ with bounded coefficients was obtained in \cite{AllSh2019}\footnote{See in this connection \cite{Sw1997}, where an estimate is established for a superharmonic function satisfying the zero Dirichlet condition in a two-dimensional domain with corners in terms of the first eigenfunction of the Dirichlet Laplacian.}.
\medskip

The boundary Harnack inequality is linked to results similar to the weak Harnack inequality for the quotient $\dfrac {u(x)}{{\rm d}(x)}$ (see \cite{S2022} and references therein). Let us give, for example, one of the results of \cite{S2022}.

\begin{thm} \label{thm:sirakov}
Let $u$ be a non-negative weak supersolution of the equation\footnote{It is important that no condition is imposed on the behavior of $u$ near $\partial\Omega$.}
${\mathfrak L}u=f$ in a ${\cal C}^{1,1}$-smooth domain. Assume that the condition (\ref{eq:uniell}) is satisfied as well as the following assumptions:
$$
a^{ij}\in W^1_q(\Omega), \quad b^i\in L_q(\Omega), \quad f_-\in L_q(\Omega);\qquad q>n.
$$
Then 
$$
\bigg(\int\limits_{\Omega}\Big(\frac{u(x)}{{\rm d}(x)}\Big)^s dx\bigg)^{\frac 1s}\leq C \bigg(\inf\limits_{x\in\Omega}\,\frac{u(x)}{{\rm d}(x)}+\|f_-\|_{q,\Omega}\bigg)
$$
for any $s<1$. The constant $C$ depends on $n$, $\nu$, $s$, $q$, on the norms of coefficients $a^{ij}$ and $b^i$ in corresponding spaces, on $\mbox {\rm diam}(\Omega)$, and on the prop\-er\-ties of $\partial\Omega$.
\end{thm}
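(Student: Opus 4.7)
The plan is to reduce the problem to the non-divergence framework and then combine a boundary weak Harnack estimate with the classical interior one via a Harnack-chain covering of $\Omega$.

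First, since $a^{ij}\in W^1_q(\Omega)$ with $q>n$ gives $a^{ij}\in\mathcal C^{0,1-n/q}(\overline\Omega)$ by Sobolev embedding, I would rewrite
\[
\mathfrak L u = -a^{ij}D_iD_ju+(b^i-D_ja^{ij})D_iu =: \mathcal L u,
\]
so that $\widetilde b^i := b^i-D_ja^{ij}\in L_q(\Omega)$. Because $q>n$, standard strong solvability theory upgrades the weak supersolution $u$ to an element of $W^2_{q,\mathrm{loc}}(\Omega)$, and the inequality $\mathcal L u\geq f$ holds pointwise a.e. From this point on the argument proceeds entirely in the non-divergence setting, within the reach of Theorem~\ref{thm:Na12} and the Krylov--Safonov weak Harnack inequality.

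Denote $m:=\inf_{x\in\Omega} u(x)/d(x)$. Since $\partial\Omega\in\mathcal C^{1,1}$, the function $d$ is itself of class $\mathcal C^{1,1}$ in a uniform boundary strip $\{d<r_0\}$ and admits a global Lieberman-type extension $\rho\sim d$ with $|\mathcal L\rho|\leq C_0(1+|\widetilde b|)\in L_q(\Omega)$. The core step is a \emph{boundary weak Harnack estimate}: for every $x^0\in\partial\Omega$ and every sufficiently small $r$,
\[
\Big(\frac{1}{|B_r(x^0)\cap\Omega|}\int_{B_r(x^0)\cap\Omega}(u/d)^{s}\,dx\Big)^{1/s}\leq C\inf_{A_{r/2}(x^0)}\frac{u}{d}+C\, r^{1-n/q}\,\|f_-\|_{q,\Omega},
\]
where $A_{r/2}(x^0)\subset\Omega$ is a non-tangential set of points with $d(\cdot)\sim r$. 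This would be proved by flattening the boundary locally via a $\mathcal C^{1,1}$-diffeomorphism (which preserves the ellipticity constants, the summability class of the lower-order coefficients, and turns $d$ into $x_n$ up to a bounded factor), and then applying the Krylov idea recalled in Remark~\ref{rem:Krylov}: one compares $u$ with the barrier $x_n$ via the Aleksandrov--Bakelman estimate of Theorem~\ref{thm:pmAB-3} on half-cylinders $Q_{r,r}$, and then uses the Safonov-type weak Harnack of \cite{Sa2010} along the non-tangential set.

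Finally, I would cover $\Omega$ by finitely many boundary balls $B_r(x^0)$ and by interior balls $B_\delta(y)$ with $d(y)\geq r_0$. On the interior balls the ratio $u/d$ is equivalent to $u$ up to constants depending only on $r_0$ and $\mathrm{diam}(\Omega)$, so the classical interior weak Harnack inequality produces the desired $L^s$-bound there. A finite Harnack chain of interior balls then connects the infima of $u/d$ over the various non-tangential sets $A_{r/2}(x^0)$ to the global infimum $m$, yielding the claimed global estimate. The main obstacle will be the boundary step: making the comparison of $u$ with the barrier $d$ effective in the presence of unbounded lower-order coefficients requires a careful tracking of the dependence of the ABP constant on $\|\widetilde b\|_{n,\Omega}$ through an iteration in $r$, in the spirit of Ladyzhenskaya--Uraltseva--Safonov, and it is this iteration that produces the precise factor $r^{1-n/q}$ in front of $\|f_-\|_{q,\Omega}$. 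The restriction $s<1$ is inherent, as already in the classical weak Harnack inequality the Moser-type argument with the test function $(u+\varepsilon)^{s-1}\zeta$ breaks down at $s=1$.
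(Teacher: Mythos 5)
This theorem is quoted in the survey from \cite{S2022} without an accompanying proof, so there is no in-text argument to compare against; I therefore judge your plan on its own merits, and it has a genuine gap. The central step that fails is the claimed upgrade of the weak supersolution $u\in W^1_{2,\rm loc}(\Omega)$ to a strong supersolution in $W^2_{q,\rm loc}(\Omega)$. For a weak supersolution, $\mathfrak L u-f$ is a nonnegative distribution, i.e.\ a nonnegative Radon measure, with no $L^p$-density in general; a model example is $u(x)=\min\{c,\,1-|x|\}$, which is Lipschitz, superharmonic, and lies in $W^1_\infty$, yet its distributional Laplacian charges the sphere $\{|x|=1-c\}$, so $u\notin W^2_p$ for any $p\geq1$. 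Moreover, the hypotheses give only $f_-\in L_q(\Omega)$, not $f\in L_q(\Omega)$, so even for a weak \emph{solution} the $W^2_q$ upgrade would not follow. Consequently, it is not legitimate to feed $u$ into the Aleksandrov--Bakelman estimate of Theorem~\ref{thm:pmAB-3}, into the Krylov comparison of Remark~\ref{rem:Krylov}, or into the Safonov weak Harnack inequality of \cite{Sa2010}: all of these require $u\in W^2_{n,\rm loc}$, and the asymmetry between weak solutions (which gain interior regularity) and weak supersolutions (which do not) is exactly what your reduction to the non-divergence framework overlooks.

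The repair is to keep $u$ in the divergence framework throughout and to spend the hypothesis $a^{ij}\in W^1_q(\Omega)$ only on the barrier. If $\rho\in{\cal C}^{1,1}(\overline\Omega)$ is a regularized distance with $\rho\sim{\rm d}$, then $a^{ij}D_j\rho\in W^1_q(\Omega)$ by the product rule, hence $\mathfrak L\rho\in L_q(\Omega)$; one then compares $u$ with $\kappa\rho$ as weak super/subsolutions: $w=u-\kappa\rho$ satisfies $\mathfrak L w\geq f-\kappa\mathfrak L\rho$ weakly, and the Stampacchia maximum estimate together with the De Giorgi--Moser weak Harnack inequality for divergence-form operators with $b^i\in L_q$, $q>n$ (both valid for $W^1_2$ supersolutions) give the lower bound on $u/\rho$ with no regularity upgrade of $u$. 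The geometric skeleton of your plan --- boundary flattening, a boundary-plus-interior covering, and a Harnack chain linking the local infima to $m=\inf_\Omega u/{\rm d}$ --- is the right structure and survives this repair. One smaller inaccuracy: the threshold $s<1$ is not the interior Moser threshold (which would allow $s<n/(n-2)>1$ for $n\geq3$); it is a genuinely boundary-layer restriction, as the sharpness example quoted after the theorem shows, since for $u=x_n|x|^{-n}$ in the half-ball one has $u/{\rm d}\sim|x|^{-n}$ near the origin, and $(u/{\rm d})^s$ is locally integrable there exactly when $s<1$.
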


\noindent
The harmonic function $x_n\cdot|x|^{-n}$ in the half-ball $B_r^+=B_r\cap\{x_n>0\}$ shows that the constraint $s<1$ is sharp.
\medskip

We also mention some papers (see, e.g., \cite{LlSC2014}, \cite{Ll2015}, \cite{BlM2019}) where the boundary Harnack inequality was obtained in the ``abstract'' context of metric spaces.

\subsection{Other results for linear operators}

In the papers \cite{An1979}, \cite{BrP2003}, a generalized strong maximum principle is established for the operators $-\Delta+c(x)$ with $c\in L_1(\Omega)$; solutions are understood in the sense of measures. For further results in this direction see \cite{BST2015}, \cite{OPo2016}, \cite{PoW2020}.
\medskip

It is well known that the validity of the weak maximum principle for a second-order elliptic operator is equivalent to the positivity of the first eigenvalue for the corresponding Dirichlet problem. In the paper \cite{BeNiV1994}, a generalized first eigenvalue is defined for uniformly elliptic operators ${\cal L}+c(x)$ with bounded coefficients in an arbitrary bounded domain\footnote{For operators with smooth coefficients in smooth domains, this formula actually gives the first eigenvalue; in this case, the supremum can be taken over smooth functions $\phi$ positive in $\Omega$. For the Laplacian, the formula (\ref{eq:eigen}) was apparently first highlighted in \cite{Ba1937}. Then it was generalized to various classes of operators (see \cite{BeNiV1994} and references therein).}
(the supremum is taken over $\phi\in W^2_{n,{\rm loc}}(\Omega)$, $\phi>0$ in $\Omega$)

\begin{equation} \label{eq:eigen}
\lambda_1=\sup\limits_{\phi}\inf\limits_{x\in\Omega}\,\frac {{\cal L}\phi(x)+c(x)\phi(x)}{\phi(x)}.
\end{equation}
It is shown in \cite{BeNiV1994} that the weak maximum principle (as well as the ``improved'' weak maximum principle introduced in this article) for the operator ${\cal L}+c(x)$ is equivalent to the inequality $\lambda_1>0$.
\medskip

In recent decades the study of partial differential equations on complicated structures has become very popular. In a number of papers (see, e.g., \cite{GoN2013}, \cite{DFFZ2015}, \cite{BBB2016}, \cite{MT2016} and references therein), conditions for the validity of the strong maximum principle, the Harnack inequality, the normal derivative lemma, and the boundary Harnack inequality were studied for {\bf subelliptic} operators, including sub-Laplacians on homogeneous Carnot groups.
\medskip

In the papers \cite{GaP2000E}, \cite{OPS2012E}, \cite{OPS2014E} the strong maximum principle and the normal derivative lemma were considered for the simplest elliptic operators on {\bf stratified sets}, which are cell complexes with some special properties\footnote{The simplest examples of such operators are the operators of the Venttsel problem and the two-phase Venttsel problem.}.

\subsection{Nonlinear operators}

Even the simplest keyword search shows that in recent years the number of articles on the topic of the survey, concerning nonlinear operators, can been estimated at dozens per year. Therefore, this subsection has an obviously dotted character without even a minimum completeness.\medskip

The Harnack inequality for divergence form quasilinear operators was first proved in \cite{S1964} and then for wider classes of operators in \cite{Tr1967} and \cite{Tr1981}. These works are now classics. We also note the paper \cite{DiBTr1984}, where the Harnack inequality was established for {\bf\textit{quasi-minimizers}} of variational problems.
\medskip

In the paper \cite{DV1972}, the normal derivative lemma from \cite{V1963}, \cite{V1964} was generalized to the quasilinear case.

For the operators similar to $p$-Laplacian 
\begin{equation} \label{eq:p-lap}
\Delta_pu\equiv D_i(|Du|^{p-2}D_iu), \qquad p>1 
\end{equation}
the normal derivative lemma was first proved in \cite{To1983}. Among the recent gen\-er\-al\-iza\-tions of this result, we mention the papers \cite{MS2015} and \cite{CaRSc2019}.

In the papers \cite{Cel2002} and \cite{BCM2006}, sharp conditions for the strong maximum principle and the normal derivative lemma were obtained for the minimizers of the functional
$$
J[u]=\int\limits_\Omega f(Du)\,dx.
$$

J.L.~V\'azquez \cite{Vz1984} proved the strong maximum principle for the equation
\begin{equation} \label{eq:Vaz}
-\Delta_pu+f(u)=0  \quad \mbox{in} \quad \Omega \subset \R^n, \qquad n \geq 2.
\end{equation}

\begin{thm}\label{thm:Vaz}
Let $f\in{\cal C}(\R_+)$ be a nondecreasing function, and let $f(0)=0$. Then a necessary and sufficient condition providing that arbitrary (nonzero) nonnegative supersolution of the equation (\ref{eq:Vaz}) does not vanish in $\Omega$ is the relation
\begin{equation} \label{eq:Vaz1}
\int\limits_0^\delta \frac {dt}{\big(F(t)\big)^{\frac 1p}}=\infty, \qquad\mbox{where}\quad F(t)=\int\limits_0^t f(s)\,ds.
\end{equation}
\end{thm}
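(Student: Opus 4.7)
The statement is an equivalence, so I would split the proof into two implications.

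\textbf{Necessity.} Suppose $\int_0^\delta F(t)^{-1/p}\,dt < \infty$ for some $\delta > 0$. I would construct a nontrivial nonnegative weak solution that vanishes on an open set. Convergence of the integral forces $f(s) > 0$ for $s > 0$, so $F$ is strictly positive and increasing on $(0,\delta]$. Define $v : \R \to [0,\delta]$ by $v(r) = 0$ for $r \leq 0$ and, for $r$ in a right neighbourhood of $0$, implicitly via
$$
\int_0^{v(r)} \frac{dt}{F(t)^{1/p}} = \Bigl(\tfrac{p}{p-1}\Bigr)^{1/p} r.
$$
Differentiation yields $\tfrac{p-1}{p}(v')^p = F(v)$, and differentiating once more gives the 1D equation $-(|v'|^{p-2}v')' + f(v) = 0$ on $\{r>0\}$. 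Since $v(0) = v'(0) = 0$ and the flux $|v'|^{p-2}v'$ is continuous across $r = 0$, the extension is a $C^1$ weak solution on $\R$. Then $u(x) := v(x_n)$ is a $C^1$, nonnegative, nontrivial weak solution of $-\Delta_p u + f(u) = 0$ on any $\Omega$ meeting both half-spaces $\{x_n > 0\}$ and $\{x_n \leq 0\}$, vanishing on $\Omega \cap \{x_n \leq 0\}$. This violates the SMP.

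\textbf{Sufficiency.} Suppose $\int_0^\delta F(t)^{-1/p}\,dt = \infty$ for every $\delta > 0$. Let $u \geq 0$ be a nontrivial weak supersolution; arguing by contradiction, suppose $u(x_0) = 0$ at some interior $x_0 \in \Omega$. The conceptual core is the following \emph{radial SMP lemma}: any nonnegative monotone increasing $\varphi \in C^1([0,R])$ with $\varphi(0) = 0$ satisfying the radial supersolution inequality
$$
-\bigl(\rho^{n-1} |\varphi'|^{p-2}\varphi'\bigr)' + \rho^{n-1} f(\varphi) \geq 0 \quad \text{on } (0,R)
$$
is identically zero. To prove it, multiply by $\varphi' \geq 0$, integrate from $0$ to $\rho$, use $(|\varphi'|^{p-2}\varphi')' \varphi' = \tfrac{1}{p}\tfrac{d}{d\rho}(|\varphi'|^p) \cdot (p-1)$ and integration by parts, and discard the nonnegative contributions arising from the derivative of $\rho^{n-1}$; this yields the sharp 1D bound $\tfrac{p-1}{p}|\varphi'(\rho)|^p \leq F(\varphi(\rho))$. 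Separation of variables gives
$$
\int_0^{\varphi(\rho)} \frac{dt}{F(t)^{1/p}} \leq \Bigl(\tfrac{p}{p-1}\Bigr)^{1/p} \rho,
$$
and divergence of the integral forces $\varphi(\rho) = 0$.

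To apply the lemma, I reduce $u$ to a radial supersolution. Choose $R > 0$ with $B_R(x_0) \subset \Omega$ and let $\bar w$ be the weak solution of the Dirichlet problem $-\Delta_p \bar w + f(\bar w) = 0$ in $B_R(x_0)$ with constant boundary data $\bar w \equiv \min_{\partial B_R(x_0)} u$; by symmetry and uniqueness for the $p$-Laplacian, $\bar w$ is radial. The weak comparison principle (valid since $f$ is nondecreasing) yields $0 \leq \bar w \leq u$, hence $\bar w(x_0) = 0$, and the lemma gives $\bar w \equiv 0$, whence $\min_{\partial B_R(x_0)} u = 0$ for every admissible $R$. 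A supplementary argument — comparing $u$ with an explicit radial subsolution $\psi(\rho) = \alpha\rho$ (which, in the divergent case $f(t) \leq Ct^{p-1}$, satisfies $-\Delta_p \psi + f(\psi) \leq 0$ on $B_{R_*}(x_0)$ for a fixed $R_* > 0$, via the computation $(n-1)\alpha^{p-1}/\rho \geq f(\alpha\rho)$) applied to a suitably chosen geometric configuration — then upgrades this to $u \equiv 0$ in a neighbourhood of $x_0$. Hence $\{u = 0\}$ is open in $\Omega$; being also closed and nontrivial in the connected domain $\Omega$, this contradicts $u \not\equiv 0$.

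\textbf{Main obstacle.} The 1D energy estimate is the elementary heart where the divergence hypothesis enters sharply: the two-line bound $\int_0^{\varphi(\rho)} F^{-1/p}\,dt \leq c_p \rho$ is finite, so divergence forces $\varphi \equiv 0$. The chief technical difficulty lies in the reduction from a general $n$-dimensional supersolution $u$ to a radial one: the constant-boundary-data device yields only $\min_{\partial B_R} u = 0$, not $u \equiv 0$ in a ball, so a secondary barrier comparison is required, and the divergence hypothesis is invoked twice (once in the energy lemma, once to guarantee existence of the linear subsolution $\psi$). Additional care is needed because weak supersolutions of the $p$-Laplacian are in general only $C^\alpha$, so derivative manipulations must be carried out through suitable test functions and approximation arguments.
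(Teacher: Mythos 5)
The paper does not prove Theorem~\ref{thm:Vaz}; it is stated as a citation to V\'azquez's 1984 paper, so there is no internal proof to compare against. Judged on its own merits, your necessity direction is essentially V\'azquez's: the one-dimensional profile $v$ defined by $\int_0^{v(r)}F(t)^{-1/p}\,dt = c_p\,r$ gives the classical ``flat'' counterexample $u(x)=v(x_n)$ (one small note: $v$ is only defined on a bounded $r$-interval, so you should truncate by setting $v\equiv\delta$ thereafter, which produces a weak \emph{super}solution rather than a solution — that suffices).

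The sufficiency direction, however, has a genuine gap exactly where you yourself flagged an obstacle. Your radial reduction (constant boundary data $m=\min_{\partial B_R(x_0)}u$, uniqueness $\Rightarrow$ radiality, and the radial SMP lemma) only yields $\min_{\partial B_R(x_0)}u=0$ for every $R$, i.e.\ one zero of $u$ on each sphere around $x_0$. That information is compatible with the zero set being a lower-dimensional set (e.g.\ a hypersurface through $x_0$), so it does not show that $\{u=0\}$ is open. The ``supplementary argument'' you propose to close this gap is based on the claim that divergence of $\int_0^\delta F(t)^{-1/p}\,dt$ forces $f(t)\le Ct^{p-1}$ near $0$; this is false. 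For instance $f(t)=t^{p-1}\log(1/t)$ has $F(t)\asymp t^{p}\log(1/t)$, so $\int_0^\delta F(t)^{-1/p}\,dt \asymp \int^\infty s^{-1/p}\,ds = \infty$, yet $f(t)/t^{p-1}\to\infty$. Hence the linear barrier $\psi(\rho)=\alpha\rho$ need not be a subsolution on any fixed ball, and the upgrade from ``a zero on each sphere'' to ``$u\equiv0$ near $x_0$'' does not go through.

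V\'azquez's actual argument avoids this by never trying to show the zero set is open via spherical minima. Instead he runs a Hopf-type barrier argument: if $\{u=0\}\ne\varnothing$ and $\{u>0\}\ne\varnothing$, pick a ball $B_r(y)\subset\{u>0\}$ whose boundary touches $\{u=0\}$ at some $x_0$, build in the annulus $B_r(y)\setminus\overline{B_{r/2}(y)}$ a decreasing radial subsolution $w$ with $w(r)=0$, $w>0$ inside, and $w'(r)<0$ — the construction of $w$ is where the 1D ODE and the divergence of $\int_0^\delta F(t)^{-1/p}\,dt$ are used — then comparison gives $u\ge w$ in the annulus, hence $\partial_{\bf n}u(x_0)>0$, contradicting $Du(x_0)=0$ at an interior zero minimum of a $C^{1,\alpha}$ function. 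The integral condition is used once (to build the annulus barrier with a strict normal derivative), not twice, and the argument terminates directly at $x_0$ rather than needing a secondary comparison. Your radial SMP lemma is a correct and pretty observation, but the reduction from a general supersolution to that lemma loses too much.
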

\noindent For generalizations of this result to wider classes of quasilinear operators see \cite{PS2000}, \cite{FQ2002}, \cite{SSu2021}. In the paper \cite{Ju2016}, the corresponding Harnack inequality is established (for $p=2$):

\begin{thm}\label{thm:Vaz1}
Suppose that $f: \R_+\to\R_+$ is a non-decreasing function.
Let $u\in W^1_2(B_{2R})$ be a solution to the equation ${\mathfrak 
L}_0u+f(u)=0$, where ${\mathfrak L}_0$ is a (divergence type) uniformly elliptic 
operator with measurable coefficients. Denote $M=\sup\limits_{B_R}u$ 
and $m=\inf\limits_{B_R}u$. Then\footnote{Notice that for $f\equiv0$ 
this relation becomes the classical Harnack inequality.}
$$
\int\limits_m^M \frac {dt}{\big(F(t)\big)^{\frac 12} +t}\leq C,
$$
where $F$ is defined in (\ref{eq:Vaz1}), and the constant $C$ depends only on $n$ and $\nu$ (in particular, it does not depend on $f$!).
\end{thm}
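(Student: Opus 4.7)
The plan is to reduce the stated integral inequality to a classical Moser-type $L^\infty$ estimate via a careful change of dependent variable. Define the strictly increasing function
$$
\Phi(t):=\int_m^t \frac{ds}{\sqrt{F(s)}+s},\qquad t\ge m,
$$
and set $v:=\Phi(u)\ge 0$ in $B_{2R}$. Since $v$ vanishes at any point $x^m\in\overline{B_R}$ where $u$ attains the value $m$, the target inequality is equivalent to the sup estimate $\sup_{B_R} v=\Phi(M)\le C(n,\nu)$.

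First I would derive the PDE satisfied by $v$. A direct computation yields $\Phi''(t)=-\Phi'(t)^2\bigl(1+\tfrac{f(t)}{2\sqrt{F(t)}}\bigr)$, and in particular $-\Phi''(t)\ge \Phi'(t)^2$. Combined with the chain rule and the equation ${\mathfrak L}_0 u=-f(u)$, one checks (in the weak sense) that
$$
{\mathfrak L}_0 v \geq \nu\,|Dv|^2-\Phi'(u)f(u),
$$
where the quadratic gradient term comes from the bound $-\Phi''(u)\,a^{ij}D_iuD_ju\ge\nu\,\Phi'(u)^2|Du|^2=\nu\,|Dv|^2$. Thus $v$ is essentially a non-negative subsolution of a uniformly elliptic equation with a favourable (quadratic) gradient term on the right-hand side and a relatively mild inhomogeneous perturbation, and all structural constants depend only on $n$ and $\nu$.

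Next I would run a Moser iteration tailored to this inequality. Testing against $\eta^2 v^{p-1}$ for $p\ge 1$ with a cut-off $\eta\in C^\infty_0(B_{2R})$, using the quadratic gradient term to absorb contributions involving $|D\eta|$, and applying the Sobolev embedding, one obtains iteratively on dyadic balls $R_k=R(1+2^{-k})$ the sup-mean bound $\sup_{B_R} v\le C(n,\nu)\bigl(\fint_{B_{2R}} v^2\bigr)^{1/2}$. Then the energy estimate $\int_{B_{2R}}|Dv|^2\le CR^{n-2}$ (obtained by testing the PDE against $\eta^2$ and absorbing the resulting $\int|Dv|^2\eta^2$ on the left) together with Poincaré's inequality anchored at the vanishing point of $v$ produces $\|v\|_{L^2(B_{2R})}\le CR^{n/2}$, and hence the desired bound $\sup_{B_R} v\le C(n,\nu)$.

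The main obstacle will be the uniform absorption of the perturbation $\Phi'(u)f(u)=f(u)/(\sqrt{F(u)}+u)$ during the iteration, since $f(u)/\sqrt{F(u)}$ is a priori unbounded when $f$ is steep. The monotonicity $F(u)\le u f(u)$ offers partial control, but the cleanest resolution is to recognise $f(u)/\sqrt{F(u)}=2\,d\sqrt{F(u)}/du$ and integrate by parts along the Moser test identity, re-expressing the residual entirely in terms of $v$ and $|Dv|$. It is precisely the role of the $+t$ term in the denominator of $\Phi'$ to keep $\Phi$ finite when $f\equiv 0$, so that the bound degenerates smoothly into the classical Harnack inequality $\log(M/m)\le C$ as a limiting case, explaining why the same constant $C(n,\nu)$ works uniformly for \emph{all} admissible $f$.
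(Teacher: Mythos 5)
The change of dependent variable $v=\Phi(u)$ with $\Phi(t)=\int_m^t\frac{ds}{\sqrt{F(s)}+s}$ is a natural starting point, and the computation $\Phi''=-(\Phi')^2\bigl(1+\frac{f}{2\sqrt F}\bigr)\le-(\Phi')^2$ is correct. But the conclusion you draw from it, that $v$ is ``essentially a non-negative subsolution'', is a sign error, and the rest of the argument collapses with it. With the paper's convention ${\mathfrak L}_0=-D_i(a^{ij}D_j)$, passing $\Phi'(u)$ into the test function in $\int a^{ij}D_juD_i\psi\,dx=-\int f(u)\psi\,dx$ yields, for $\eta\ge0$,
$$\int a^{ij}D_jv\,D_i\eta\,dx=\int\bigl[-\Phi'(u)f(u)-\Phi''(u)a^{ij}D_juD_iu\bigr]\eta\,dx\ \ge\ \int\bigl[\nu|Dv|^2-\Phi'(u)f(u)\bigr]\eta\,dx,$$
which is the inequality you write — but this is the \emph{supersolution} direction (in the $f\equiv0$, Laplacian case it reads $-\Delta v\ge|Dv|^2\ge0$, i.e.\ $v=\log(u/m)$ is superharmonic). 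The Moser iteration you invoke, testing against $\eta^2v^{p-1}$ to get $\sup_{B_R}v\lesssim\|v\|_{L^2(B_{2R})}$, requires ${\mathfrak L}_0v\le g$; with the inequality you actually have, the resulting Caccioppoli estimate carries the wrong sign and the quadratic term $\nu|Dv|^2$ appears on the $\ge$ side of the equality, so it cannot be used to ``absorb'' the $|D\eta|$ contributions as you claim. Tellingly, in the $f\equiv0$ case your scheme would reduce the classical Harnack inequality to a single subsolution sup-estimate for $\log u$, which is known not to exist: Moser's proof needs the sub- and super-solution iterations plus the $BMO$/John--Nirenberg crossover, and that structural difficulty does not disappear here.

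Two further gaps remain even setting the sign aside. Since $m=\inf_{B_R}u$ rather than $\inf_{B_{2R}}u$, the function $v=\Phi(u)$ need not be non-negative in $B_{2R}\setminus B_R$, so the Poincar\'e step ``anchored at the vanishing point'' and the iteration on the annular balls $R_k\in(R,2R)$ both rely on a positivity that is not available. And the term $\Phi'(u)f(u)=\frac{f(u)}{\sqrt{F(u)}+u}$ really is the heart of the matter: both the energy estimate (which, after absorption, reads $\frac{\nu}{2}\int|Dv|^2\eta^2\le C\int|D\eta|^2+\int\Phi'(u)f(u)\eta^2$) and any iteration require a quantitative bound on $\int\Phi'(u)f(u)\eta^2$, which is not controlled by $n,\nu$ alone since $f$ is only assumed non-decreasing. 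Writing $f/\sqrt F=2\,d\sqrt F/du$ and integrating by parts, as you suggest, reintroduces $D_iu$ and the test function through $\int\sqrt{F(u)}\,D_i(\cdot)\,dx$ with no a priori gain. The survey does not prove this theorem; it cites \cite{Ju2016}, whose proof is built on a level-set/oscillation-splitting scheme specifically designed to avoid any pointwise bound on $f(u)/\sqrt{F(u)}$, and a single Moser pass on $\Phi(u)$ does not reproduce it.
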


The boundary Harnack inequality for operators of $p$-Laplacian type in a ${\cal C}^2$-smooth domain was established in \cite{BVBV2006}. Subsequently, it was proved for the wider classes of domains discussed in \S\, \ref{ss:4.3} (see \cite{Ny2013} and references therein). The boundary Harnack inequality for the maximal and minimal Pucci operators was proved in \cite{S2018} (see also \cite{BgM2018}).
\medskip

Nowadays, popular objects of research are also $p(x)$-Laplacians, i.e. operators of the form (\ref{eq:p-lap}), where the exponent $p$ is a function of the $x$ variables. The Harnack inequality for such operators was first proved in \cite{Alk1997E} (for recent generalizations see, e.g., \cite{AlkSur2019E} and \cite{SkVo2021}). In \cite{AdLu2016}, the boundary Harnack in\-equal\-i\-ty was established in a ${\cal C}^{1,1}$-smooth domain.

\subsection{Nonlocal operators}

In recent decades, interest in the study of nonlocal (integro-differential) operators has increased significantly. Among them, {\bf fractional Laplacians} show up. The simplest of these (and historically the first), the fractional Laplacian in $\R^n$ of order $s$, is defined using the Fourier transform\footnote{To define accurately this and similar operators, as well as the notion of a weak (sub/super)solution to the corresponding equations, it would be necessary to introduce the Sobolev--Slobodetskii spaces (\cite[Ch. 2--4]{Tb1978}; see also \cite{DNPV2012}). We will not do this, pitying the reader.}:
$$
(-\Delta)^s u = {\cal F}^{-1} \big(|\xi|^{2s}({\cal F}u)(\xi)\big), \qquad s>0;
$$
for $s\in(0,1)$ this operator can be defined via a hypersingular integral:
$$
\big((-\Delta)^s u\big)(x)=C_{n,s}\cdot{\rm P.\!V.} \int\limits_{\R^n} \frac{u(x)-u(y)}{|x-y|^{n+2s}}~dy, \qquad C_{n,s}=\frac{s2^{2s}\Gamma(\frac{n}{2}+s)}{\pi^{\frac{n}{2}}\Gamma(1-s)}.
$$

M. Riesz \cite{Ri1938} (see also \cite[Ch. IV, \S\,5]{Lf1966E}) proved a direct analog of the Harnack inequality (\ref{eq:Harnack1}) for $(-\Delta )^s$ with $s\in(0,1)$:
\medskip

\textit{Let $u\geq0$ in $\R^n$, and let $(-\Delta)^s u=0$ in $B_R$. Then, for $x\in B_R$ we have}
$$
u(0)\,\frac {(R-|x|)^s R^{n-2s}}{(R+|x|)^{n-s}}\leq u(x)\leq u(0)\,\frac {(R+|x|)^s R^{n-2s}}{(R-|x|)^{n-s}}.
$$

In contrast to the case of the whole space, the fractional Laplacians in the domain $\Omega\subset\R^n$ certainly depend on the boundary conditions (there are fractional  Dirichlet Laplacians, Neumann Laplacians, etc.). Moreover, even for a fixed type of boundary conditions, there are several essentially different definitions of fractional Laplacians: {\bf restricted, spectral}, and some others. Note that to compare restricted and spectral Dirichlet Laplacian, in \cite{MN2014}, \cite{MN2016} the classical normal derivative lemma for weakly degenerate operators (see \cite{KH1977E}, \cite{ABMMZR2011E}) was used.

Proofs of the strong maximum principle for various fractional Laplacians of order $s\in(0,1)$ in $\Omega$ can be found in \cite{Sl2007}, \cite{CDDS2011}, \cite{IMS2015}; in the paper \cite{MN2019}, a unified approach was proposed for a large family of fractional Laplacians and more general nonlocal operators. On the other hand, it is shown in \cite{AJS2018}, \cite{AJS2021} that even the weak maximum principle does not hold for the restricted fractional Dirichlet Laplacian with $s>1$ in a domain of general form\footnote{Note that in $ \R^n$ as well as in the ball $\Omega=B_R$ the strong maximum principle holds for any $s>0$, which is also shown in \cite{AJS2018}.}.
\medskip 

The boundary Harnack inequality for the operator $(-\Delta)^s$, $s\in(0,1)$, in a Lipschitz domain was proved in \cite{Bg1997}. Due to the non-locality of the operator, its formulation differs from the standard one (see \S\,\ref{ss:4.3}):
\medskip

\noindent\textit{Let $0\in\Omega$. If $u_1$ and $u_2$ are non-negative functions in $\R^n$, continuous in the ball $B_R$, satisfying the equation $(-\Delta)^s u=0$ in $\Omega\cap B_R$ and the condition $u_1|_{B_R\setminus\Omega}=u_2|_{B_R\setminus\Omega}=0$, then the inequality (\ref{eq:BoundHarnack}) holds true\footnote{If $u_2(0)=0$ then $u_2\equiv0$. Therefore, we can assume that $u_2(0)>0$.} in the subdomain $\Omega\cap B_{R/2}$ with constant $C$ depending only on $n$, $s$, $\Omega$ and $R$.}
\medskip

\noindent Later this result was extended to {\bf\textit{arbitrary}} domains $\Omega$ and to a wide class of integro-differential operators (see \cite{ROS2019} and references therein).

In \cite{ROS2014} was constructed a barrier which is sufficient to prove an analogue of the normal derivative lemma in the following form:
\medskip

\noindent\textit{Let $\Omega$ be a ${\cal C}^{1,1}$-smooth domain, and let $s\in(0,1)$. Suppose that $u$ is a weak su\-per\-so\-lu\-tion to the equation $(-\Delta)^s u=0$ in $\Omega$, and $u=0$ in $\R^n\setminus\Omega$. If $u\not\equiv0$ then}
\begin{equation}
\inf\limits_{x\in\Omega}\, \frac  {u(x)}{{\rm d}^s(x)}\,>0.
\label{eq:bpp1}
\end{equation}

\noindent
Further generalizations of this result can be found, for instance, in \cite{RO2016}, \cite{RO2018}. For operators of fractional $p$-Laplacian type, a similar assertion was proved in \cite{DPQ2017}.
\medskip

For the {\bf\textit{spectral}} fractional Dirichlet Laplacian, instead of (\ref{eq:bpp1}), the in\-equal\-i\-ty $\inf\limits_{x\in\Omega}\, \dfrac {u(x) }{{\rm d}(x)}\,>0$ holds under the same assumptions (see Theorem~1.2 in \cite{KiSoV2019}, where more general functions of the Laplace operator with Dirichlet conditions are also considered). An analog of the normal derivative lemma for the {\bf regional} fractional Laplacian with $s \in (\frac{1}{2}, 1)$ was obtained in a recent preprint \cite{AFT2021}.
\medskip

In \cite{GuS2012}, a generalization of the Aleksandrov--Bakelman maximum prin\-ci\-ple for non-local analogs of the maximal and minimal Pucci operators is obtained.
\medskip

An application of the moving plane method to problems with fractional Laplacians can be found in \cite{FJ2015} and in the papers cited there.

\small
\bibliography{Bibliography_Hopf_surveyR}

\end{document}